\theoremstyle{definition}
\newtheorem{theorem}{Theorem}[section]
\newtheorem{teo}{Theorem}[section]
\newtheorem{cor}[theorem]{Corollary}
\newtheorem{lem}[theorem]{Lemma}
\newtheorem{conj}[theorem]{Conjecture}
\newtheorem{prop}[theorem]{Proposition}
\newtheorem{defi}[theorem]{Definition}
\newtheorem{ex}[theorem]{Example}
\newtheorem*{rep@theorem}{\rep@title}
\newcommand{\newreptheorem}[2]{%
\newenvironment{rep#1}[1]{%
 \def\rep@title{#2 \ref{##1}}%
 \begin{rep@theorem}}%
 {\end{rep@theorem}}}
\newcommand{\I}{{\cal I}}
\newcommand{\B}{{\cal B}}
\newcommand{\RR}{{\cal R}}
\newcommand{\Circ}{\mathrm{Circ}}
\newcommand{\Cocirc}{\textrm{Cocirc}}
\newcommand{\supp}{\textrm{supp}}
\begin{document}
\title{\textsf{The topology of the external activity complex of a matroid}}
\author{
\textsf{Federico Ardila\footnote{\noindent \textsf{San Francisco State University, San Francisco, USA; Universidad de Los Andes, Bogot\'a, Colombia. federico@sfsu.edu}}} \\
%\small Department of Mathematics\\[-0.8ex]
%\small San Fransico State University \\[-0.8ex]
%\small San Francisco, CA, USA\\ [-0.8.ex]
%\small \texttt{federico@sfsu.edu}
\and \textsf{Federico Castillo\footnote{\noindent \textsf{University of California, Davis, USA. fcastillo@math.ucdavis.edu}}} \\
%\small Department of Mathematics \\[-0.8ex]
%\small University of California at Davis\\[-0.8ex]
%\small Davis, CA, USA\\[-0.8ex]
%\small \texttt{fcastillo@math.ucdavis.edu}
\and \textsf{Jos\'e Alejandro Samper\footnote{\noindent \textsf{University of Washington, Seattle, USA. samper@math.washington.edu\newline Ardila was partially supported by the US National Science Foundation CAREER Award DMS-0956178 and the SFSU-Colombia Combinatorics Initiative.}}}} 
%\small Department of Mathematics\\[-0.8ex]
%\small University of Washington\\[-0.8ex]
%\small Seattle, WA 98195-4350, USA\\[-0.8ex]
%\small \texttt{samper@math.washington.edu}

\date{}

\maketitle

\begin{abstract}
We prove that the external activity complex $\textrm{Act}_<(M)$ of a matroid is shellable. In fact, we show that every linear extension of LasVergnas's external/internal order $<_{ext/int}$ on $M$ provides a shelling of $\textrm{Act}_<(M)$. We also show that every linear extension of LasVergnas's internal order $<_{int}$ on $M$ provides a shelling of the independence complex $IN(M)$. 
As a corollary, $\textrm{Act}_<(M)$ and $M$ have the same $h$-vector.
We prove that, after removing its cone points, the external activity complex is contractible if $M$ contains $U_{3,1}$ as a minor, and a sphere otherwise.

\end{abstract}

\section{\textsf{Introduction}}

\textbf{\textsf{Wider context of this work.}}
Matroid theory is a combinatorial theory of independence which has its roots in linear algebra and graph theory, but which turns out to have deep connections with many fields. There are natural notions of independence in linear algebra, graph theory, matching theory, the theory of field extensions, and the theory of routings, among others. Matroids capture the combinatorial essence that those notions share.

A matroid can be described in many equivalent ways, arising from the many contexts where matroids are found: the bases, the circuits, the lattice of flats, and the matroid polytope, among others. One important approach, which is the most relevant one to this paper, has been to model a matroid in terms of a simplicial or polyhedral complex. In fact, most of these topological models arise naturally in algebraic and geometric contexts, and offer new tools to prove combinatorial theorems. A celebrated recent example is the proof by Huh \cite{Huh} and Huh and Katz \cite{HuhKatz} of Rota's 1971 conjecture \cite{Rota71} that the coefficients of the characteristic polynomial of a linear matroid are unimodal. A key ingredient of this proof is the Bergman complex $\mathcal{B}(M)$ described below.

Let us describe a few constructions of this flavor, and provide a few references for the interested reader. The notion of shellability is a very useful unifying tool in this approach, as explained in \cite{bjorner}.
%
%
%, postponing the definitions to Section \ref{sec:background}. We select a few references from the extensive bibliography on each of these objects; \cite{bjorner} is particularly relevant.

 $\bullet$ \cite{Provan80, bjorner}
The \emph{independence complex} or \emph{matroid complex} $IN(M)$ is homotopy equivalent to a wedge of $T_M(0,1) = |\mu(M^*)|$ spheres of dimension $r(M)-1$ if $M$ is coloopless. This complex is shellable, and its shelling polynomial is $T_M(x,1)$. The shellability of $IN(M)$  naturally leads to the important notions of internal and external activity of $M$. 

 $\bullet$ \cite{Whitney32, Rota64, Brylawski77a}
The \emph{broken circuit complex} $\overline{BC}_<(M)$ is, a cone over a space homotopy equivalent to a wedge of $|\beta(M)|$ spheres of dimension $r(M)-2$. It can be naturally embedded into $IN(M)$. It is shellable and its shelling polynomial is $T_M(x,0)$. The embedding is a combinatorial witness of such a result. Its face numbers equal the coefficients of the characteristic polynomial of $M$ up to sign. 

 $\bullet$ \cite{Folkman66, Stanley77, OrlikSolomon80}
the (proper part of the) \emph{order complex of the lattice of flats} $\Delta(L_M \backslash \{\widehat{0}, \widehat{1}\})$ is homotopy equivalent to a wedge of $T_M(1,0) = |\mu(M)|$ spheres of dimension $r(M)-2$. It is shellable. %, and its shelling polynomial is \comment{Cual es?}\comment{Estuve buscando un buen rato. Bjorner calcula los restriction sets en terminos del EL shelling que viene del poset, pero no parece que de nada bonito. No se que opinan, pero yo creo que no hay mucho por decir. Hay una descripci—n de $h_i$ contando cierta cantidad de cadenas en un labelling 'natural' de los covers del poset, pero no parece ser nada bonito,}. 
This is a motivating example for the theory of Cohen Macaulay posets. It also arises naturally in Orlik and Solomon's presentation of the cohomology of the complement of a complex hyperplane arrangement.

 $\bullet$ \cite{Sturmfelsbook, ArdilaKlivans}
The \emph{Bergman complex} $\mathcal{B}(M)$ is the link of the origin in the \emph{tropical linear space} $\textrm{Trop}(M)$. It is not always simplicial. Though not obvious from its definition, $\mathcal{B}(M)$  is a coarsening of $\Delta(L_M \backslash \{\widehat{0}, \widehat{1}\})$ and hence shares its topological properties. These complexes are fundamental objects in tropical geometry because $\textrm{Trop}(M)$ is the tropical analog of a linear space.

\medskip

The purpose of this paper is to describe a new member of this family.

\medskip

 $\bullet$ 
The \emph{external activity complex} ${\textrm{Act}}_<(M)$ is, after removing cone points, either contractible or a sphere of dimension $n+r-1-|AE(M)|$
%$2|E| - |AE(M)| - 1$, 
where $AE(M)$ is the set of externally absolute elements. It contains a copy of $IN(M)$ as a subcomplex. It is shellable, and its shelling polynomial is $T_M(x,1)$.
Its shellability is closely related to Las Vergnas's \emph{active orders} on the bases of $M$. 

\medskip

Hence the external activity complex  sheds new light on the shelling polynomial $T_M(x,1)$ of a matroid $M$. This is a subject of great attention thanks to Stanley's 1977 \emph{$h$-vector conjecture}, one of the most intriguing open problems in matroid theory:

\begin{conj} \cite{Stanley77} For any matroid $M$, there exists a set $X$ of monomials such that:

- if $m$ and $m'$ are monomials such that $m \in X$ and $m'|m$, then $m' \in X$,

- all the maximal monomials in $X$ have the same degree,

- there are exactly $h_i$ monomials of degree $i$ in $X$, where $\sum_i h_ix^{r-i} = T(x,1)$.
\end{conj}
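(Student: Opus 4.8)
Stanley's conjecture has been open for over four decades, so what follows is a plan for attacking it with the machinery of this paper together with an honest account of where such a plan stalls. The idea is to use the shelling of the independence complex $IN(M)$ coming from a linear extension of Las Vergnas's internal order $<_{int}$ — or, more finely, the shelling of $\textrm{Act}_<(M)$ coming from a linear extension of the external/internal order — to build the set $X$ of monomials by hand. Fix such a linear extension $B_1 < B_2 < \cdots$ of the bases of $M$. In the induced shelling of $IN(M)$, each basis $B$ caps a half-open interval $[R(B), B]$ of faces, where $R(B) \subseteq B$ is the set of internally passive elements of $B$ with respect to $<$; thus $|R(B)|$ equals $r$ minus the internal activity of $B$, and summing $x^{r-|R(B)|}$ over all bases reproduces the shelling polynomial $T_M(x,1) = \sum_i h_i x^{r-i}$, so that $h_i = \#\{B : |R(B)| = i\}$. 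It therefore suffices to attach to each basis $B$ a monomial $m_B$ of degree $|R(B)|$ so that $\{m_B : B \text{ a basis of } M\}$ is a set of distinct monomials, closed under divisibility, with all of its maximal elements of the same degree.

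The first step I would take is to analyze the intervals $[R(B), B]$ through Las Vergnas's active orders. For a fixed independent set $I$, the bases $B$ with $R(B) \subseteq I \subseteq B$ are controlled by the minor of $M$ obtained by deleting the externally active elements and contracting the internally active ones; the active orders stratify the bases in a manner compatible with passing to minors, and the external activity complex is the natural ambient object here because its ground set already carries a second copy of $E(M)$. The hope is that within each stratum the internally passive sets form the generators of an order ideal, so that one can assemble $X$ by taking the support monomials $\prod_{e \in R(B)} x_e$ stratum by stratum and then ``spreading the strata apart'' by multiplying by distinct monomials in auxiliary variables housed in the second copy of $E(M)$. One would then attempt to prove, by induction on $|E(M)|$ with base cases the loopless matroids of rank or corank at most two (where the conjecture is classical), that this spreading can be carried out while preserving the degree of each monomial — which forces each correction to trade one variable for another — while keeping the monomials distinct, the divisibility poset an order ideal, and the maximal monomials all of the same degree.

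The main obstacle is precisely what has kept the conjecture open for so long: shellability, which is what this paper contributes, yields only the numerical identity $h_i = \#\{B : |R(B)| = i\}$ together with a partition of the faces of $IN(M)$ into intervals, and this is strictly weaker than the existence of the pure order ideal $X$. The naive choice $m_B = \prod_{e \in R(B)} x_e$ already fails for small matroids on two of the three required counts: distinct bases can share an internally passive set, and the supports of those sets need not be closed under divisibility. A proof along these lines would have to exploit genuinely finer structure of the external/internal order — its (semi)lattice properties, say, or the way its intervals factor through minors — that a shelling order alone does not record. Barring such an insight, the realistic yield of this approach is a confirmation of Stanley's conjecture for those families of matroids whose active-order strata are simple enough to make the spreading construction transparent.
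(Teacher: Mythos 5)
This statement is Stanley's $h$-vector conjecture, which the paper cites only as motivation and context; the paper explicitly notes that ``the general case remains open'' and does not claim or attempt a proof. You have correctly recognized this: your submission is not a proof but a candid sketch of a line of attack together with a diagnosis of where it stalls, which is the appropriate stance. Your central observation is accurate and worth underlining: the shellings produced in this paper (via linear extensions of $<_{int}$ for $IN(M)$, or of $<_{ext/int}$ for $\textrm{Act}_<(M)$) yield the numerical identity $h_i = \#\{B : |R(B)| = i\}$ and a partition of the faces into intervals $[R(B),B]$, but this is strictly weaker than the existence of a pure multicomplex (the set $X$ of monomials) realizing the $h$-vector. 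You are also right that the naive assignment $m_B = \prod_{e \in R(B)} x_e$ fails both distinctness and closure under divisibility in small examples, so any serious approach must exploit finer structure of the active orders than the shelling alone records. In short: no gap to report in the sense of an error, because you have not claimed a proof; but to be clear for the record, neither you nor the paper proves this conjecture, and your text should not be mistaken for a proof of it.
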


\noindent This conjecture has been proved, using rather different methods, for several families: cographic matroids,  \cite{Merinocographic}, lattice path matroids \cite{Schweig}, cotransversal matroids \cite{Ohcotransversal}, paving matroids \cite{Merinoetal}, and matroids up to rank $4$ or corank $2$ \cite{DeLoeraKemperKlee, KleeSamper}. The general case remains open.

\bigskip

\noindent
\textbf{\textsf{Motivation for this work.}}
The \emph{external activity complex} $\textrm{Act}_<(M)$ of a matroid is a simplicial complex associated to a matroid $M$ and a linear order $<$ on its ground set. This complex arose in work of the first author with Adam Boocher \cite{ArdilaBoocher}. They started with a linear subspace $L$ of affine space $\mathbb{A}^n$ with a chosen system of coordinates. There is a natural  embedding $\mathbb{A}^n \hookrightarrow (\mathbb{P}^1)^n$ into a product of projective lines, and they considered the closure $\widetilde{L}$ of $L$ in $(\mathbb{P}^1)^n$. They proved that many geometric and algebraic invariants of the variety $\widetilde{L}$ are determined by the matroid of $L$.

As is common in combinatorial commutative algebra, a key ingredient of \cite{ArdilaBoocher} was to consider the initial ideals $\textrm{in}_<\widetilde{L}$ under various term orders. These initial ideals are the Stanley-Reisner ideals of the external activity complexes $\textrm{Act}_<(M)$ under the different linear orders $<$ of the ground set. This led them to consider and describe the complexes $\textrm{Act}_<(M)$. 

The ideals $\textrm{in}_<\widetilde{L}$ are shown to be Cohen-Macaulay in \cite{ArdilaBoocher}, and the authors asked the stronger question: Are the external activity complexes $\textrm{Act}_<(M)$  shellable? %(This stronger combinatorial-topological statement about $\textrm{Act}_<(M)$ would imply that algebraic result about its Stanley-Reisner ideal $\textrm{in}_<\widetilde{L}$.) 
The purpose of this note is to answer this question affirmatively.

\bigskip

\noindent \textbf{\textsf{Our results.}}
The facets of $\textrm{Act}_<(M)$ are indexed by the bases $\B$ of $M$, and \cite{ArdilaBoocher} suggested a possible connection between $\textrm{Act}_<(M)$ and LasVergnas's \emph{internal order} $<_{int}$ on $\B$. \cite{LasVergnas} Suprisingly, we find that it is the \emph{external/internal order} $<_{ext/int}$ on $\B$, also defined in \cite{LasVergnas}, which plays a key role. Our main result is the following:

\begin{theorem}\label{main}
Let $M = (E, \mathcal{B})$ be a matroid, and let $<$ be a linear order on the ground set $E$. Any linear extension of LasVergnas's external/internal order $<_{ext/int}$ of $\B$ induces a shelling of the external activity complex $\textrm{Act}_<(M)$.
\end{theorem}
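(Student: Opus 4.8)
Writing $n=|E|$ and $r$ for the rank, I would begin by recalling from \cite{ArdilaBoocher} that $\textrm{Act}_<(M)$ is a pure $(n+r-1)$-dimensional complex on the doubled vertex set $\{x_e,y_e : e\in E\}$, with one facet per basis $B$, namely
\[
F_B=\{x_b,y_b : b\in B\}\ \cup\ \{y_e : e\text{ externally active for }B\}\ \cup\ \{x_e : e\notin B,\ e\text{ externally passive for }B\};
\]
in particular $B$ is recovered from $F_B$ as $\{e : \{x_e,y_e\}\subseteq F_B\}$, so distinct bases give distinct facets. I would then use the standard shelling criterion for pure complexes: an ordering $F_{B_1},\dots,F_{B_t}$ of the facets is a shelling iff for all $i<j$ there are an index $k<j$ and a vertex $v\in F_{B_j}\setminus F_{B_i}$ with $F_{B_k}\cap F_{B_j}=F_{B_j}\setminus\{v\}$. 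Since a linear extension of $<_{ext/int}$ lists every basis after all bases $<_{ext/int}$-below it, and since $i<j$ in such an order forces that $B_i$ is \emph{not} $<_{ext/int}$-above $B_j$, the theorem reduces to the following claim. \textbf{Claim:} for any two distinct bases $B,B'$ with $B$ not $<_{ext/int}$-above $B'$, there exist a basis $B^{\ast}$ with $B^{\ast}<_{ext/int}B'$ and a vertex $v\in F_{B'}\setminus F_B$ such that $F_{B^{\ast}}\cap F_{B'}=F_{B'}\setminus\{v\}$ and $F_B\cap F_{B'}\subseteq F_{B^{\ast}}$.

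\textbf{Why the claim suffices, and how I would attack it.} The claim gives the theorem immediately: applied to $B=B_i$, $B'=B_j$ with $i<j$, the basis $B^{\ast}$ is $<_{ext/int}$-below $B_j$, hence it equals some $B_k$ with $k<j$, and the vertex $v$ satisfies $F_{B_i}\cap F_{B_j}\subseteq F_{B_j}\setminus\{v\}=F_{B_k}\cap F_{B_j}$, which is exactly the criterion. To prove the claim I would first record the shape of $F_B\cap F_{B'}$ ($x_e$ lies in it iff $e$ is in or externally passive for both $B,B'$; $y_e$ lies in it iff $e\in(B\cup EA(B))\cap(B'\cup EA(B'))$), so that $F_{B'}\setminus F_B$ is a nonempty set of vertices of four obvious types ($x_b$ or $y_b$ with $b\in B'$; $x_e$ or $y_e$ with $e\notin B'$). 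The plan is to choose the discrepancy vertex $v\in F_{B'}\setminus F_B$ so that the \emph{opposite} elementary exchange returns exactly the facet we need: if $v=x_b$ or $y_b$ with $b\in B'$, take $B^{\ast}=B'-b+c$ for an extreme element $c$ of the fundamental cocircuit $C^{\ast}(b,B')$; if $v=x_e$ or $y_e$ with $e\notin B'$, take $B^{\ast}=B'+e-b$ for an extreme element $b$ of the fundamental circuit $C(e,B')$. Once $v$ lies in $F_{B'}\setminus F_B$, the containment $F_B\cap F_{B'}\subseteq F_{B^{\ast}}$ is automatic because $F_{B^{\ast}}$ differs from $F_{B'}$ only by reinstating $v$; the identity $F_{B^{\ast}}\cap F_{B'}=F_{B'}\setminus\{v\}$ is a direct computation comparing fundamental circuits and external activities of $B'$ and $B^{\ast}$.

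\textbf{Where the external/internal order enters; the main obstacle.} The remaining, and genuinely hard, point is that $B^{\ast}<_{ext/int}B'$. Discrepancies on the ``basis part'' $\{x_b,y_b : b\in B'\}$ of $F_{B'}$ are resolved by exchanges that descend in Las Vergnas's \emph{internal} order, so one might naively expect $<_{int}$ to work (indeed $IN(M)\subseteq\textrm{Act}_<(M)$); but discrepancies on the ``externally active part'' $\{y_e : e\in EA(B')\}$ force exchanges that are \emph{not} covers of $<_{int}$, and these are precisely the additional covering relations present in $<_{ext/int}$. So the crux is a case analysis --- using the basis/circuit exchange axioms together with Las Vergnas's characterization of the covers of $<_{ext/int}$ via the external and internal activities of the two bases --- showing in every case that the forced exchange is a \emph{descending} cover and that a discrepancy vertex admitting such an exchange can always be located. (Consistency, i.e.\ that one is never forced to want both $B^{\ast}<_{ext/int}B'$ and $B'<_{ext/int}B^{\ast}$, is guaranteed a priori by $<_{ext/int}$ being a partial order.) I expect this matching of ``peelings of $\partial F_{B'}$'' with covers of $<_{ext/int}$ to be the main technical obstacle; an alternative I would keep in reserve is an induction on $|E|$ exploiting the compatibility of Las Vergnas's activities and orders with deletion and contraction of the largest element, splicing together shellings of complexes built from $M\setminus e$ and $M/e$, although the direct verification seems better aligned with the ``every linear extension works'' conclusion.

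\textbf{Consequence for the $h$-vector.} Once shellability is established, I would identify the restriction face $R(F_{B'})=\{v\in F_{B'} : F_{B'}\setminus v\subseteq F_{B''}\text{ for some }B''<_{ext/int}B'\}$, and check --- again from the claim and the description of the covers of $<_{ext/int}$ --- that $|R(F_{B'})|$ equals the internal activity $i(B')$ of $B'$. It follows that the shelling polynomial of $\textrm{Act}_<(M)$ is $\sum_{B}x^{\,i(B)}=T_M(x,1)$, so $\textrm{Act}_<(M)$ and $M$ have the same $h$-vector, as claimed in the introduction.
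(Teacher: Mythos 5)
Your setup and reduction are correct and essentially match the paper's framing: one verifies the pairwise shelling criterion by producing, for each pair $A<C$ (you write $B<B'$), an earlier basis $B^\ast$ differing from $C$ by a single elementary exchange whose facet swallows $F_A\cap F_C$. Your observation that, once $F_{B^\ast}\cap F_C=F_C\setminus\{v\}$ with $v\notin F_A$, the containment $F_A\cap F_C\subseteq F_{B^\ast}$ is free, is also right. And you correctly diagnose the two qualitatively different kinds of discrepancy vertex and that the ones coming from the externally‑active part are exactly where the internal order alone is insufficient.

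However, the core of the theorem is precisely the construction you defer as ``the main technical obstacle,'' and it is genuinely nontrivial, so the proposal has a real gap. Concretely, three things are missing. First, you do not say \emph{which} element of $C$ to exchange out; the paper uses the hypothesis $A\not\geq_{ext/int} C$ in an essential way here, via the equivalence (Lemma~\ref{masimportante}) $A\not\geq_{ext/int}C \iff IP(C)\cap EP(A)\neq\emptyset$, which hands you an element $c\in C$ that is simultaneously internally passive for $C$ and externally passive for $A$; that choice is what makes conditions (a) and (b) of the shelling criterion fall out. Second, the obvious exchange — replace $c$ by the smallest $b$ with $(C-c)\cup b$ a basis, giving $B<_{int}C$ — can \emph{fail} the third condition (an element $d\notin B\cup C$ may change external‑activity status), and the paper must characterize exactly when this happens and then run a second step, replacing $b$ by the largest ``external disagreement'' $b'$, which now descends via $<_{ext}$ rather than $<_{int}$. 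This two‑step structure is not anticipated in your sketch; a single ``pick an extreme element of the fundamental (co)circuit'' move does not suffice. Third, the verification that the corrected exchange $B'=(C-c)\cup b'$ has \emph{no} external disagreements requires a nontrivial circuit‑elimination case analysis (three cases according to the relative order of $b'$, the new disagreement $d'$, and $c$), which is where most of the work lives. In short, the plan is sound and aligned with the actual proof, but it stops exactly where the proof begins. One minor inconsistency at the end: you assert $|R(F_{B'})|$ is the internal activity of $B'$; it should be $|IP(B')|$, the internal \emph{passivity}, which is what makes $\sum_B x^{\,r-|R(F_B)|}=\sum_B x^{|IA(B)|}=T_M(x,1)$ come out right.
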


As a corollary we obtain that these orders also shell the independence complex $IN(M)$, and in fact we show a stronger statement.

\begin{theorem}\label{intshell} Any linear extension of the internal order $<_{int}$ gives a shelling order of the independence complex $IN(M)$.
\end{theorem}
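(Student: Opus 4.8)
The plan is to deduce this result from Theorem~\ref{main} by relating the independence complex $IN(M)$ to the external activity complex $\textrm{Act}_<(M)$. The key structural fact, which I expect to be recorded in the body of the paper, is that $IN(M)$ sits inside $\textrm{Act}_<(M)$ as the subcomplex induced on the ``internal'' copy of the ground set, and that the retraction deleting the external coordinates carries the external/internal order $<_{ext/int}$ on $\mathcal{B}$ to the internal order $<_{int}$ on $\mathcal{B}$. More precisely, writing the ground set of $\textrm{Act}_<(M)$ as $E \sqcup \overline{E}$ with $\overline{E}$ the external copy, the facet of $\textrm{Act}_<(M)$ indexed by a basis $B$ has the form $B \cup \overline{EA(B)} \cup (\text{internal part})$, and restricting to the $E$-coordinates recovers the facet $B$ of $IN(M)$ together with its externally passive elements; one checks that a shelling of $\textrm{Act}_<(M)$ restricts to a shelling of this ``top slice,'' which is exactly $IN(M)$ after discarding cone points. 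Since $<_{ext/int}$ refines $<_{int}$ on the bases in the appropriate sense, any linear extension of $<_{int}$ lifts (or projects) to a valid shelling order.

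First I would recall or establish the precise combinatorial description of the facets of $\textrm{Act}_<(M)$ in terms of external activity, together with the description of the minimal new face added at each step of a shelling governed by $<_{ext/int}$; this is presumably already available from the proof of Theorem~\ref{main}. Second, I would identify $IN(M)$ with the subcomplex of $\textrm{Act}_<(M)$ obtained by contracting or deleting the external ground-set elements, making explicit the bijection on facets $B \mapsto B$ and tracking how the restriction sets (the minimal non-faces) transform under this operation. Third, I would verify that the shelling condition — each new facet meets the union of the previous ones in a pure codimension-one subcomplex — is inherited: the restriction of a shelling is a shelling provided the projection is ``nice'' on the order, which here means showing that $B <_{int} B'$ forces $B <_{ext/int} B'$ is compatible with the ordering, or directly that any linear extension of $<_{int}$ on $\mathcal{B}$ is realized by a linear extension of $<_{ext/int}$ composed with the facet identification. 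Finally I would translate the resulting shelling of the subcomplex back into the statement about $IN(M)$, noting the standard fact that removing cone points does not affect shellability.

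The main obstacle I anticipate is the third step: a shelling of a complex does not in general restrict to a shelling of an arbitrary subcomplex, so I must use the specific geometry here — that $IN(M)$ is not merely any subcomplex but a ``deletion face'' obtained by setting the external coordinates to their maximal value, so that the restriction sets of $\textrm{Act}_<(M)$ in the $E$-directions are precisely the restriction sets needed for $IN(M)$. Making this compatibility precise requires a careful bookkeeping of external versus internal activity: I need that the minimal new face of the facet $B$ in the shelling of $\textrm{Act}_<(M)$, when intersected with the $E$-coordinates, gives exactly the externally passive elements of $B$ that are ``forced'' by an earlier basis, which is the classical restriction set for the shelling of $IN(M)$ by external activity. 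If the paper has set up $\textrm{Act}_<(M)$ so that this holds essentially by construction, the proof is short; otherwise the bulk of the work is this activity-bookkeeping lemma, and I would isolate it as a preliminary claim before invoking Theorem~\ref{main}.
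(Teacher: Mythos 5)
There is a genuine gap that sinks the proposed route. The order $<_{ext/int}$ is by definition a \emph{common extension} of $<_{ext}$ and $<_{int}$, i.e.\ it has strictly more relations than $<_{int}$. Consequently the linear extensions of $<_{ext/int}$ form a \emph{proper subset} of the linear extensions of $<_{int}$. So deducing Theorem~\ref{intshell} from Theorem~\ref{main} (via the observation, correct in itself and recorded in the paper as Corollary~\ref{ActtoIN}, that a shelling of $\textrm{Act}_<(M)$ restricts to a shelling of $IN(M)$) only yields the weaker statement that every linear extension of $<_{ext/int}$ shells $IN(M)$. The paper states exactly this weaker corollary and then explicitly says ``we now prove a stronger result'' before proving Theorem~\ref{intshell} directly. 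Your sentence claiming that ``any linear extension of $<_{int}$ lifts (or projects) to a valid shelling order'' is where the argument breaks: there is no such lift, because a linear extension of $<_{int}$ can invert a cover relation of $<_{ext/int}$. The paper's Example~\ref{intfalla} exhibits this concretely --- the order $124,125,134,135,245,345,234,235$ is a linear extension of $<_{int}$ that does shell $IN(M)$ (Theorem~\ref{intshell}) but does \emph{not} shell $\textrm{Act}_<(M)$, and hence (by Theorem~\ref{main} in the contrapositive) is not a linear extension of $<_{ext/int}$. So your reduction cannot reach this ordering.

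Your own flag of the ``third step'' as the main obstacle is aimed at the wrong worry: the restriction of a shelling of $\textrm{Act}_<(M)$ to $IN(M)$ is in fact unproblematic (Corollary~\ref{ActtoIN}), so that is not where the argument fails; what fails is the \emph{order-theoretic} step of covering all linear extensions of $<_{int}$ by linear extensions of $<_{ext/int}$. The paper handles Theorem~\ref{intshell} by a separate, elementary argument. Given $A < C$ (so $A \not>_{int} C$), it lets $D$ be the lexicographically smallest basis containing $A \cap C$, observes $D \neq C$ and $D <_{int} C$ (a small lemma about lex-smallest bases: Lemma~\ref{minimo}), picks $d = \min(D - C)$ and a suitable $c \in C - D$ with $C - c \cup d$ a basis, and sets $B$ to be the lex-smallest basis containing $X = C - c$. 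It then checks $B <_{int} C$ and $A \cap C \subseteq B \cap C = X$, which is exactly the shelling condition. That construction makes no reference to $\textrm{Act}_<(M)$ at all. To repair your approach you would need a different, direct argument in the style of the paper; the route through Theorem~\ref{main} cannot be patched to recover the full strength of Theorem~\ref{intshell}.
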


These theorems are as strong as possible in the context of LasVergnas's active orders. 
We also obtain the following enumerative corollary. 

\begin{theorem}\label{hvector}
The $h$-vector of $\textrm{Act}_<(M)$ equals the $h$-vector of $M$.
\end{theorem}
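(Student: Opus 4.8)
Recall that the $h$-vector of $M$ is, by definition, the $h$-vector of its independence complex $IN(M)$, so the claim amounts to $h(\textrm{Act}_<(M)) = h(IN(M))$. The plan is to read both $h$-vectors off from the shellings provided by Theorems \ref{main} and \ref{intshell}. The basic tool is the standard fact that if $\Delta$ is a pure shellable complex and $F_1,\dots,F_s$ is any shelling of its facets, then
\[
h(\Delta,x)\;=\;\sum_{j=1}^{s} x^{|\mathcal{R}(F_j)|},
\qquad
\mathcal{R}(F_j)=\{\,v\in F_j:\ F_j\setminus v\in\langle F_1,\dots,F_{j-1}\rangle\,\},
\]
where $\mathcal{R}(F_j)$ is the restriction face of $F_j$; in particular the multiset $\{\,|\mathcal{R}(F_j)|\,\}_{j}$ is a shelling invariant of $\Delta$.

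First I would treat $IN(M)$. By Theorem \ref{intshell} a linear extension of $<_{int}$ shells $IN(M)$, and I would check (this is essentially the classical computation underlying the identity $\sum_i h_i(M)\,x^{r(M)-i}=T_M(x,1)$) that the restriction face of the facet $B$ is the set of internally passive elements of $B$ with respect to $<$, so that $|\mathcal{R}_{IN}(B)|=r(M)-\iota(B)$, where $\iota(B)$ denotes the internal activity of $B$. Thus $h_i(IN(M))=\#\{B\in\mathcal{B}:\ \iota(B)=r(M)-i\}$.

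The main point is the analogous computation in $\textrm{Act}_<(M)$. By Theorem \ref{main} a linear extension of $<_{ext/int}$ shells $\textrm{Act}_<(M)$, and from the proof of that theorem I would extract a description of the restriction face $\mathcal{R}_{\textrm{Act}}(F_B)$ of the facet $F_B$ indexed by a basis $B$; the claim I would then prove is that $|\mathcal{R}_{\textrm{Act}}(F_B)|=|\mathcal{R}_{IN}(B)|=r(M)-\iota(B)$ for every $B\in\mathcal{B}$. Concretely: although $F_B$ has roughly $n$ more elements than a facet of $IN(M)$, those extra elements are always removable the moment $F_B$ is adjoined, so none of them lies in $\mathcal{R}_{\textrm{Act}}(F_B)$; hence $\mathcal{R}_{\textrm{Act}}(F_B)$ lives inside the embedded copy of $IN(M)\subseteq\textrm{Act}_<(M)$ and matches $\mathcal{R}_{IN}(B)$ up to the bookkeeping of that embedding. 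This is the step I expect to be the real obstacle, and it is where the structure of $<_{ext/int}$ and of $\textrm{Act}_<(M)$ is genuinely used; it can be organized around the cone-point picture of the introduction, since deleting the cone points of a shellable complex leaves its $h$-vector unchanged, and the ``extra'' coordinates of the facets are exactly these redundant directions.

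Granting this, the multisets $\{\,|\mathcal{R}_{\textrm{Act}}(F_B)|:B\in\mathcal{B}\,\}$ and $\{\,|\mathcal{R}_{IN}(B)|:B\in\mathcal{B}\,\}$ agree, so $\textrm{Act}_<(M)$ and $IN(M)$ have the same $h$-polynomial, i.e. $h(\textrm{Act}_<(M))=h(M)$. (Alternatively one could bypass restriction faces entirely: the Hilbert series of $R/\textrm{in}_<\widetilde{L}$ equals that of $R/I(\widetilde{L})$, so the $h$-vector of $\textrm{Act}_<(M)$ is already pinned down by the geometry of $\widetilde{L}$ studied in \cite{ArdilaBoocher}; but the shelling argument above is self-contained given Theorems \ref{main} and \ref{intshell}.)
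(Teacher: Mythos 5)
Your overall strategy is exactly the paper's: compute restriction sets for both shellings and compare their sizes. And your guess about the answer is right, in the sense that the paper shows the restriction set of the facet $F(C)$ in $\textrm{Act}_<(M)$ is $\overline{IP(C)}$, a set of barred elements of the same cardinality as $IP(C)$, the restriction set of $C$ in $IN(M)$. But you explicitly flag the crucial step as "the real obstacle" and then don't actually carry it out; the substitute you offer --- that the "extra" coordinates of $F_B$ are "always removable the moment $F_B$ is adjoined" and that this can be organized via the cone points --- does not close the gap. The cone points of $\textrm{Act}_<(M)$ are the elements of $AEP(M)\cup\overline{AEA(M)}$; removing them leaves facets $F(B)^\bullet$ of size $n+r-|AE(M)|$, which is generically much larger than $r$, so cone-point bookkeeping alone does not localize $\mathcal{R}_{\textrm{Act}}(F_B)$ inside the embedded copy of $IN(M)$. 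What actually pins down the restriction set is the numerical characterization of the order $<_{ext/int}$: $C\leq_{ext/int}B$ iff $IP(C)\cap EP(B)=\emptyset$ (Theorem \ref{extint}.2). From this one gets the two halves of the computation: (i) if $\overline{IP(C)}\subseteq F(B)$ then $IP(C)\cap EP(B)=\emptyset$, hence $C\leq_{ext/int}B$, i.e. $IP(C)$ does not occur in any earlier facet; (ii) for minimality, any proper subset $U\subsetneq IP(C)$ is contained in a basis $A$ with $A-IA(A)\subseteq U$ (Theorem \ref{crapo}.2), which gives $A<_{int}C$ and hence $\overline{U}\subseteq F(A)$ with $F(A)$ earlier. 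That is the content you are missing; without it the argument is a plausible outline but not a proof.

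(A similar, but smaller, gap is present in your $IN(M)$ step: for a general linear extension of $<_{ext/int}$ or $<_{int}$, rather than the lexicographic order, the identity $\mathcal{R}_{IN}(C)=IP(C)$ also needs the one-paragraph argument using Theorem \ref{int}.2--3 and Theorem \ref{crapo}.2; "this is essentially the classical computation" is not quite enough since the classical computation is for $<_{lex}$.)
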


It is easy to see that $\textrm{Act}_<(M)$ is a cone, and hence trivially contractible. It is more interesting to study the \emph{reduced external activity complex} ${\textrm{Act}}^\bullet_<(M)$, obtained by removing all the cone points of $\textrm{Act}_<(M)$. Our main topological result is the following.
%Also identify $M$ with its simplicial complex of independent sets, which is known to be a wedge of spheres. \cite{bjorner}

\begin{theorem} \label{topo} %Assume that $M$ is coloop free. 
Let $M$ be a matroid and $<$ be a linear order on its ground set. The reduced external activity complex ${\textrm{Act}}^\bullet_<(M)$ is contractible if $M$ contains $U_{3,1}$ as a minor, and a sphere otherwise.
\end{theorem}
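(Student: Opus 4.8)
The plan is to run the shelling from Theorem~\ref{main} and read off the homotopy type, separating the two cases by a count of cone points. Since ${\textrm{Act}}_<(M)$ is a cone, I would write ${\textrm{Act}}_<(M) = \sigma * {\textrm{Act}}^\bullet_<(M)$ with $\sigma$ the simplex on the cone points. Because $\sigma * \Gamma$ is shellable if and only if $\Gamma$ is, Theorem~\ref{main} shows that ${\textrm{Act}}^\bullet_<(M)$ is shellable and pure, of dimension $d := n+r-1-|AE(M)|$. By the classical fact that a pure shellable complex of dimension $d$ is homotopy equivalent to a wedge of $d$-spheres — one for each facet whose restriction in the shelling is the whole facet — we get ${\textrm{Act}}^\bullet_<(M) \simeq \bigvee^{b} S^{d}$ with $b = h_{d+1}({\textrm{Act}}^\bullet_<(M))$. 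So the whole theorem reduces to showing $b\le 1$, with $b=0$ exactly when $M$ has a $U_{3,1}$-minor.

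Two observations would compute $b$. First, the $h$-polynomial is a coning invariant, so by Theorem~\ref{hvector} the $h$-polynomial of ${\textrm{Act}}^\bullet_<(M)$ is $p_M(t) := \sum_i h_i(M)\,t^i$, a polynomial of degree $r-c$, where $c$ is the number of coloops of $M$; thus $b = [t^{\,d+1}]\,p_M(t)$. Second, the facet description of ${\textrm{Act}}_<(M)$ coming from the proof of Theorem~\ref{main} should identify the cone points: $x_e$ is a cone point precisely when $e$ is externally passive with respect to no basis (equivalently, $e$ is a coloop, or the smallest element of every circuit through it), and $y_e$ precisely when $e$ is externally active with respect to no basis (equivalently, $e$ is a coloop, or lies in a circuit but is never the smallest element of a circuit through it). This would give $|AE(M)| = n + c - |A^0|$, where $A^0$ is the set of elements that are smallest in one circuit through them and not smallest in another; in particular $|AE(M)| \le n+c$, hence $d+1 = n+r-|AE(M)| \ge r-c = \deg p_M$. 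Consequently $b=0$ unless $A^0=\emptyset$, and when $A^0=\emptyset$ we have $d+1 = r-c$ and $b$ equals the leading coefficient $h_{r-c}(M)$ of $p_M$.

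What remains is purely matroid-theoretic: to show $A^0=\emptyset \iff M$ has no $U_{3,1}$-minor, and $h_{r-c}(M)=1$ in that case. The crux is the lemma that a connected, loopless, coloopless matroid with $A^0=\emptyset$ is a circuit. I would argue: if $A^0=\emptyset$, each circuit contains exactly one element (its minimum) of the set $A^{+}$ of elements smallest in every circuit through them; connectedness forces $|A^{+}|=1$, say $A^{+}=\{a\}$, so $a$ lies in every circuit, $M\setminus a$ is free, $E\setminus a$ is a basis (as $M$ is coloopless), and strong circuit elimination against $C(E\setminus a,a)$ leaves that as the only circuit, so $M$ is the circuit on $E$. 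Since $A^0$ decomposes over connected components and a loop, coloop, or circuit has $A^0=\emptyset$, this yields: $A^0=\emptyset$ iff $M$ is a direct sum of loops, coloops, and circuits — which is the same as having no $U_{3,1}$-minor, because minors of direct sums are direct sums of minors, $U_{3,1}$ is connected, and $U_{3,1}$ is a minor of no loop, coloop, or circuit. Finally, for such $M$, $p_M(t)=\prod_{C}(1+t+\cdots+t^{|C|-1})$ over the circuit summands $C$ (with a factor $1$ for each loop and coloop), using that $IN$ sends direct sums to joins and a circuit has independence complex a simplex boundary; this product has leading coefficient $1$, so $b=1$ and ${\textrm{Act}}^\bullet_<(M)\simeq S^{r-c-1}=S^{\,n+r-1-|AE(M)|}$. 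When $M$ has a $U_{3,1}$-minor, $A^0\ne\emptyset$, so $b=0$ and ${\textrm{Act}}^\bullet_<(M)$ is contractible. I expect the main obstacles to be the combinatorial lemma and the exact identification of the cone points from the shelling; the rest follows formally from shellability together with Theorems~\ref{main} and~\ref{hvector}.
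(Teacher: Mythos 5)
Your proposal is correct in substance and reaches the theorem, but it differs from the paper's route in the sphere case. Where the paper identifies ${\textrm{Act}}^\bullet_<(M)$ with $IN(M)$ by an explicit simplicial isomorphism when the circuits are pairwise disjoint (Proposition \ref{subcomplex}), and then invokes the fact that the independence complex of a disjoint union of circuits is a join of boundary simplices and hence \emph{homeomorphic} to a sphere (Lemma \ref{sphere}), you instead push the $h$-vector argument to its conclusion and compute the leading coefficient $h_{r-c}=1$ to get a wedge of a single sphere. The paper's route gives the slightly stronger homeomorphism conclusion ``for free'' and avoids the computation of the leading $h$-entry; your route is more uniform across the two cases (contractible versus sphere both fall out of the single quantity $[t^{\,d+1}]p_M(t)$) and makes the dimension count $d+1=r-c+|A^0|$ do all the work. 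For the contractible case both arguments are essentially identical. Your combinatorial lemma (``$A^0=\emptyset$ iff $M$ is a direct sum of loops, coloops, and circuits'') is the same content as the paper's Lemma \ref{circuitos} plus Lemma \ref{U31}, but your proof goes through a connectedness argument (every circuit has exactly one element of $A^+$, connectedness forces $|A^+|=1$, the fundamental circuit of that element is the only circuit), whereas the paper runs a short double circuit-elimination argument directly. Both work; the paper's is a bit shorter.

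Two bookkeeping points worth flagging. First, your formula $|AE(M)|=n+c-|A^0|$ is not the paper's $|AE(M)|$: with $AE(M)=AEA(M)\cup AEP(M)$ as in the paper, a coloop contributes once, so $|AE(M)|=n-|A^0|$; your formula counts each coloop twice, which amounts to counting cone points $|AEA|+|AEP|$ rather than $|AE|$. Your downstream formula $d+1=r-c+|A^0|$ is nevertheless correct because the dimension drop is indeed by the number of cone points, not by $|AE(M)|$. (The paper sidesteps this by first reducing to the coloopless case, where the two coincide.) Second, your closing step ``strong circuit elimination against $\Circ(E\setminus a,a)$ leaves that as the only circuit'' is doing more work than needed: once $E\setminus a$ is a basis, $\Circ(E\setminus a,a)$ is by definition the \emph{unique} circuit contained in $E=(E\setminus a)\cup a$, hence the only circuit of $M$, and coloop-freeness then forces it to be all of $E$. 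No elimination is required there.
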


%
%\begin{theorem}
%The reduced external activity complex $\widetilde{\textrm{Act}$ is contractible unless the circuits of $M$ are pairwise disjoint, in which case it is homeomorphic to $M$.%a wedge of \comment{cuantas?} spheres?
%\end{theorem}

% $\Omega(M)$ However, we show that even after removing all cone points, the resulting complex $\Omega(M)$ is still contractible. To show this the first step is to prove that $\textrm{Act}_<(M)$ is shellable. Then, using the shelling order, we compute the $h-$vector, which turns out to be the same as the $h-$vector of $M$. This is enough to deduce that $\Omega(M)$ is contractible for all matroids except when all its circuits are disjoint. In that case $\Omega(M)$ is homemorphic to $M$.\\
%For all other matroids the situation is the following: t
In Proposition \ref{subcomplex} we will see there is an embedding of the independence complex $IN(M)$ in ${\textrm{Act}}^\bullet_<(M)$, and both complexes have the same $h$-vector. 
If $M$ is coloopless its independence complex is homotopy equivalent to a wedge of $|\mu(M^*)|$ spheres, while the external activity complex is contractible or a sphere. 
%A matroid $M$ is contractible if and only if it is a cone (Theorem 3.4 \cite{stanley}). By contrast, $\widetilde{\textrm{Act}$ is always contractible and never a cone. 
Thus ${\textrm{Act}}^\bullet_<(M)$ can be seen as a topologically simpler model than $IN(M)$ for the matroid  $M$.

%Thus $\underline{\textrm{Act}}_<(M)$ may be seen as an alternative simplicial complex that models the matroid $M$, and is topologically simpler than the matroid complex $IN(M)$. 

The paper is organized as follows. In Section \ref{sec:background} we introduce the necessary definitions and preliminaries. In Section \ref{sec:example} we carry out an example in detail, and show that the hypotheses of Theorems \ref{main} and \ref{intshell} are best possible. 
In Section \ref{sec:shell} we prove our main Theorem \ref{main} on the shellability of the external activity complex $\textrm{Act}_<(M)$, and Theorem \ref{intshell}, which gives many new shellings of the independence complex $IN(M)$. In Section \ref{sec:h} we show that $\textrm{Act}_<(M)$ and $IN(M)$ have the same $h$-vector. Finally, in Section \ref{sec:topology}, we describe the topology of the reduced external activity complex in Theorem \ref{topo}. %In Section \ref{sec:example} we carry out an example carefully. It may be useful to refer to it 

%---------------------------------------------------------------------------------Background------------------------------------------------------------------------
\section{\textsf{Preliminaries}}\label{sec:background}

In this section we collect the background information on matroids and shellability that we will need to prove our results.

\subsection{\textsf{Matroids}}

\textbf{\textsf{Basic definitions.}}
A {\it simplicial complex} $\Delta = (E, \I)$ is a pair where $E$ is a finite set and $\I$ is a non empty family of subsets of $E$, such that if $A\in \I$ and $B\subset A$, then $B\in A$. Elements of $\I$ are called {\it faces} of the complex. The maximal elements of $\I$ are called {\it facets}. A complex is said to be {\it pure} if all facets have the same number of elements.

The following is one of many equivalent ways of defining a matroid:

\begin{defi}
A \emph{matroid} $M = (E, \I)$ is a simplicial complex such that the restriction of $M$ to any subset of $E$ is pure. 
\end{defi}

Since there are several simplicial complexes associated to $M$, we will denote this one $IN(M)=(E,\I)$. It is often called the \emph{independence complex} of $M$.

The two most important motivating examples of matroids are the following.
\begin{itemize}
\item (Linear Algebra) Let $E$ be a set of vectors in a vector space, and let $\I$ consist of the subsets of $E$ which are linearly independent. Then $(E,\I)$ is a \emph{linear} matroid.

\item (Graph Theory) Let $E$ be the set of edges of an undirected graph $G$, and let $\I$ consist of the sets of edges which contain no cycle. Then  $(E, \I)$ is a \emph{graphical} matroid. 
\end{itemize}

For any matroid $M=(E,\I)$, it is customary to call the sets in $\I$ \emph{independent}.
The facets of a matroid are called {\it bases}. The set of all bases is denoted $\cal B$. 

\begin{ex}The simplest example of a matroid is the uniform matroid $U_{n,k}$, whose ground set is $[n]$ and whose independent sets are all the subsets of $[n]$ of cardinality at most $k$. The uniform matroid $U_{3,1}$ is going to play an important role later.
\end{ex}
%The bases of a matroid satisfy the following property.
% It satisfies the following exchange property (Lemma 1.2.2 \cite{oxley}):
%\begin{lem}[Basis exchange]\label{exchange} For any two bases $B, C$ and any $i\in B-C$, there exist $j\in C-B$ such that $B-i\cup j$ is a basis. 
%\end{lem}
%We will also use an apparently stronger (but in fact equivalent) property:

%\begin{lem}[Symmetric Exchange Property]\label{symmexch} For any two bases $B, C$ and any $b\in B-C$, there exist $c\in C-B$ such that $B-b\cup c$ and $C-c\cup b$ are bases.\end{lem}

%\begin{proof} Let $c_1, \dots, c_m$ be the elements of $C-(\Circ(i,C)\cup B)$. Let $C_0 = C$ and let $C_k = C_{k-1}-c_k\cup b_k$ be a basis that results from applying the exchange axiom to $C_{k-1}-c_k$ with $B$. We claim that $i\notin C_k$ and $\Circ(i, C_k) = \Circ(i, C)$. The proof is by induction on $k$. The base case is trivial. If the statement holds for $C_{k-1}$ then $b_k \not= i$ because $c_k\notin \Circ(i,C_{k-1}) = \Circ(i,C)$, thus $i\not\in C_k$. Also $C(i,C_{k-1}) \subseteq C_{k-1}\cup i - c_k \subseteq C_k \cup i$, thus $\Circ(i, C_{k-1}) = \Circ(i, C_k)$ as desired. Let $C':=C_m$. Then $C_m$ is a basis such that $C'-B = \Circ(i, C)-B$, $i\not\in C'$. By the exchange axiom, there is $j\in C'-B$ such that $B-i\cup j$ is a basis. Since $j\in \Circ(i,C)$ we have that $C-j\cup i$ is a basis, so $j$ is the desired element. \end{proof}  

The minimal non-faces of $M$, that is, the minimal dependent sets, are called \emph{circuits}. The circuits of a matroid have a special structure \cite[Lemma 1.1.3]{oxley}: 

\begin{lem}[Circuit Elimination Property]\label{circuitax} If $\gamma_1$ and $\gamma_2$ are circuits of a matroid and $c\in \gamma_1\cap \gamma_2$, then there is a circuit $\gamma_3$ that is contained in $\gamma_1\cup \gamma_2 - c$. 
\end{lem}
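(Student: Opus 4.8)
The plan is to prove this purely from the matroid axioms as presented in the excerpt, namely that $IN(M)$ is a simplicial complex every restriction of which is pure. First I would set up the contradiction: suppose $\gamma_1, \gamma_2$ are circuits, $c \in \gamma_1 \cap \gamma_2$, with $\gamma_1 \neq \gamma_2$ (the equal case is trivial since then $\gamma_1 \cup \gamma_2 - c = \gamma_1 - c$ must still be dependent — wait, it is independent, so we genuinely need $\gamma_1\neq\gamma_2$), and assume for contradiction that $\gamma_1 \cup \gamma_2 - c$ contains no circuit, i.e.\ $\gamma_1 \cup \gamma_2 - c \in \I$. The strategy is to restrict the matroid to the set $S = \gamma_1 \cup \gamma_2$ and derive a contradiction with purity of $IN(M)|_S$.

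The key steps, in order, are as follows. First, since $\gamma_1 \neq \gamma_2$ are distinct circuits with $c$ in common, pick an element $d \in \gamma_1 \setminus \gamma_2$ (nonempty because neither circuit contains the other, as circuits are minimal dependent sets). Consider the set $T = S - d = (\gamma_1 \cup \gamma_2) - d$. I claim $T$ is independent: any circuit inside $T$ would have to be a circuit of $M$ contained in $(\gamma_1\cup\gamma_2)-d$; but the only circuits contained in $\gamma_1 \cup \gamma_2$ that could fit are $\gamma_2$ (which contains $c$ and lies in $S$) — actually one must argue more carefully here, using that $\gamma_1$ is the unique circuit contained in... no. The cleaner route: build two facets of $IN(M)|_S$ of different sizes. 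On one hand, extend $\gamma_1 \cup \gamma_2 - c$ (assumed independent) to a facet $B_1$ of $IN(M)|_S$; note $c \notin B_1$ is impossible to force directly, so instead observe $B_1 \supseteq \gamma_1 \cup \gamma_2 - c$ has size $\geq |\gamma_1 \cup \gamma_2| - 1$. On the other hand, take any element $e \in \gamma_1 - \gamma_2$; then $\gamma_1 - e$ is independent (minimality of $\gamma_1$) and $\gamma_2 \subseteq S$ is dependent, so a facet $B_2$ of $IN(M)|_S$ containing $\gamma_1 - e$ cannot contain all of $\gamma_2$... the bookkeeping needs care, and this is where the real work sits.

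The cleanest clean version: restrict to $S$, and among all facets of $IN(M)|_S$, a facet avoiding $d$ has size at most $|S| - 1 - (\text{something})$ because it must miss a further element to break $\gamma_2$, while the facet containing $\gamma_1\cup\gamma_2 - c$ is large; comparing the two against purity of $IN(M)|_S$ yields the contradiction. The main obstacle I anticipate is precisely the careful accounting showing that the independence of $\gamma_1 \cup \gamma_2 - c$ forces $IN(M)|_S$ to have a facet strictly larger than one that is forced to exist by the dependence of $\gamma_1$ and $\gamma_2$ individually; making this count airtight without circular reasoning (and without secretly using exchange axioms not yet available) is the delicate point. A safe fallback is to invoke the standard rank-function argument — the inequality $r(\gamma_1 \cup \gamma_2) + r(\gamma_1 \cap \gamma_2 \text{-ish}) \leq r(\gamma_1) + r(\gamma_2)$ coming from submodularity — but since the excerpt has only given us the purity definition, I would derive the needed piece of submodularity from purity of restrictions on the fly, which is routine but must be spelled out.
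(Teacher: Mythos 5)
The paper does not prove this lemma; it is cited as \cite[Lemma 1.1.3]{oxley}, so there is no in-paper argument to compare against. Assessing your proposal on its own terms: your overall strategy — assume $\gamma_1\cup\gamma_2-c$ independent, restrict to $S=\gamma_1\cup\gamma_2$, and contradict purity of $IN(M)|_S$ — is exactly the right one, and it is the standard textbook route. However, as written the proposal has a genuine gap: you never complete the count that makes the contradiction bite, and you say so yourself (\emph{``the bookkeeping needs care, and this is where the real work sits''}). Moreover, the intermediate claim that $T=S-d$ is independent for $d\in\gamma_1\setminus\gamma_2$ is simply false, since $\gamma_2\subseteq T$; you seem to half-notice this and abandon it, but no replacement is supplied. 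Invoking submodularity of rank as a fallback is also not available until you have proved it, so that is not a shortcut here.

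The missing step is short and worth recording. Under the hypothesis that $\gamma_1\cup\gamma_2-c$ is independent, it is a facet of $IN(M)|_S$ of size $|S|-1$: the only element of $S$ it omits is $c$, and adjoining $c$ produces $S\supseteq\gamma_1$, which is dependent. Now pick $d\in\gamma_2\setminus\gamma_1$ (nonempty since distinct circuits are incomparable). The set $\gamma_2-d$ is independent by minimality of $\gamma_2$; extend it to a facet $I$ of $IN(M)|_S$. Since $\gamma_1\subseteq S$ is dependent, $I$ omits some $g\in\gamma_1$. Also $d\notin I$, for otherwise $\gamma_2=(\gamma_2-d)\cup\{d\}\subseteq I$. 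Finally $g\neq d$, because $g\in\gamma_1$ while $d\notin\gamma_1$. So $I$ omits at least two elements of $S$ and hence $|I|\le|S|-2$. This gives two facets of $IN(M)|_S$ of different sizes, contradicting purity. With this count in place your argument closes; without it, the proof is incomplete.
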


\bigskip
\noindent \textsf{\textbf{Duality.}}
Matroids have a notion of duality which generalizes orthogonal complements in linear algebra and dual graphs in graph theory. 

Let $M$ be a matroid with bases $\B$. Then the set
\[
\B^* = \{ E - B \, : \, B \textrm{ is a basis of } M\}
\]
is the collection of bases of a matroid $M^*=(E,\B^*)$, called the \emph{dual matroid} $M^*$. 
The circuits of the dual matroid $M^*$ are called the \emph{cocircuits} of $M$.

\bigskip
\noindent \textsf{\textbf{Deletion, contraction, and minors.}} 
%Let $M$ be a matroid whose ground set is $E$. 
We say that an element $e\in E$ is a \textit{loop} of a matroid $M$ if it is contained in no basis; that is, if $\{e\}$ is a dependent set. Dually, $e$ is a \textit{coloop} if it is contained in every basis of $M$.

The \emph{deletion} $M\backslash e$ of a non-coloop $e \in E$ is the matroid on $E-e$ whose bases are the bases of $M$ that do not contain $e$. We also call this the \emph{restriction} of $M$ to $E-e$. Dually, the \emph{contraction} $M\slash e$ of a non-loop $e \in E$ is the matroid on $E-e$ whose bases are the subsets $B$ of $E-e$ such that $B\cup e$ is a basis of $M$.

It is easy to see that any sequence of deletions and contractions of different elements commutes.
We say that a matroid $M'$ is a \emph{minor} of a matroid $M$ if $M'$ is isomorphic to a matroid obtained from $M$ by performing a sequence of deletions and contractions. 
%In other words $M'$ is a minor of $M$ if and only if one of the following happens: 
%\begin{enumerate}
%\item $M' \cong M$. 
%\item There exists $e\in E(M)$ that is not a coloop and $M'$ is a minor of $M\backslash e$. 
%\item There exists $e\in E(M)$ that is not a loop and $M'$ is a minor of $M'\slash e$. 
%\end{enumerate}

\bigskip

\noindent \textsf{\textbf{Fundamental circuits and cocircuits.}}
Given a basis $B$ and an element $e\in E-B$ there is a unique circuit contained in $B\cup e$, 
called the \emph{fundamental circuit} of $e$ with respect to $B$. It is given by
\[
\Circ(B,e) = \left\{ x\in E\, : \,  B\cup e - x \in {\cal B} \right\}.
\]

 Given a basis $B$ and an element $i \in B$ there is a unique cocircuit disjoint with $B-i$, called the \emph{fundamental cocircuit} of $i$ with respect to $B$. It is given by
\[
\Cocirc(B,i) = \left\{ x\in E\, : \,  B\cup x - i \in {\cal B} \right\}.
\]
Note that the cocircuit $\Cocirc(B,i)$ in $M$ equals the circuit $\Circ(E-B,i)$ in the dual $M^*$.

%\subsection{\textsf{Matroids: Basis activities and the external activity complex}}

\bigskip

\noindent \textsf{\textbf{Basis activities.}}
Let $<$ be a linear order on the ground set $E$. For a basis $B$, define the sets: 
\begin{eqnarray*} 
EA(B) &=& \left\{ e\in E-B\, : \,  \textrm{min} \left(\Circ(B,e)\right)=e\right\}
\\
EP(B) &=& \left\{ e\in E-B\, : \,  \textrm{min} \left(\Circ(B,e)\right) \neq e\right\}
%EP(B) &=& (E-B) - EA(B)
\end{eqnarray*}
The elements of $EA(B)$ and $EP(B)$ are called \textit{externally active} and \textit{externally passive} with respect to $B$, respectively. Note that $EA(B) \uplus EP(B) = E-B$, where $\uplus$ denotes a disjoint union.

Dually, let
\begin{eqnarray*} 
IA(B) &=& \left\{ i\in B\, : \,  \textrm{min} \left(\Cocirc(B,i)\right)=i\right\}
\\
IP(B) &=& \left\{ i \in B\, : \,  \textrm{min} \left(\Cocirc(B,i)\right) \neq i\right\}
%EP(B) &=& (E-B) - EA(B)
\end{eqnarray*}
The elements of $IA(B)$ and $IP(B)$ are called \textit{internally active} and \textit{internally passive} with respect to $B$, respectively. Note that $IA(B) \uplus IP(B) = B$. Also note that the internally active/passive elements with respect to basis $B$ in $M$ are the externally active/passive elements with respect to basis $E-B$ in $M^*$.

The following elegant result of Tutte \cite{Tutte} (for graphs) and Crapo \cite{Crapo} (for arbitrary matroids) underlies many of the results of \cite{ArdilaBoocher} and this paper.
\begin{theorem}\cite[Proposition 5.12]{Crapo}\label{crapo}
Let $M$ be a matroid on the ground set $E$ and let $<$ be a linear order on $E$. 
\begin{enumerate}
\item
Every subset $A$ of $E$ can be uniquely written in the form $A=B \cup X - Y$ for some basis $B$, some subset $X \subseteq EA(B)$, and some subset $Y \subseteq IA(B)$. 
Equivalently, the intervals $[B - IA(B), B \cup EA(B)]$ form a partition of the poset $2^E$ of subsets of $E$ ordered by inclusion.
\item
Every independent set $I$ of $E$ can be uniquely written in the form $I=B - Y$ for some basis $B$ and some subset $Y \subseteq IA(B)$. 
Equivalently, the intervals $[B - IA(B), B]$ form a partition of the independence complex $IN(M)$. %\comment{Esto es una consecuencia directa de que lex es un shelling order con resteriction set $IP(B) = B- IA(B)$. Lo mencionamos? }
\end{enumerate}

\end{theorem}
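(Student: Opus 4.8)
The plan is to prove statement (1) --- that the intervals $[B - IA(B), B \cup EA(B)]$ partition the Boolean lattice $2^E$ --- by induction on $|E|$, deleting or contracting the largest element, and then to deduce (2) by restricting this partition to the independence complex. Before starting I would record one elementary fact, which is all that (2) will need beyond (1): if $e \in EA(B)$, then $\Circ(B,e) \setminus \{e\} \subseteq B \setminus IA(B)$. Indeed, for $i \in \Circ(B,e) \setminus \{e\}$ we have $i \in B$ and $i > e = \min \Circ(B,e)$, while $B \cup e - i \in \mathcal{B}$ gives $e \in \Cocirc(B,i)$, so $\min \Cocirc(B,i) \le e < i$ and hence $i \notin IA(B)$.

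For the induction, let $z = \max(E)$. The governing observation is that, being maximal, $z$ can never be the minimum of a circuit or cocircuit having more than one element; consequently $z \in EA(B)$ forces $z$ to be a loop, and $z \in IA(B)$ forces $z$ to be a coloop. If $z$ is a loop, then $z \in EA(B)$ for every basis $B$, no cocircuit of $M$ contains $z$, and $\mathcal{B}(M) = \mathcal{B}(M\setminus z)$; thus each interval $[B-IA_M(B),\, B\cup EA_M(B)]$ is the corresponding interval for $M \setminus z$ with $z$ adjoined freely at the top, and the inductive hypothesis for $M\setminus z$ finishes this case. The coloop case is dual, with $z$ internally active everywhere and $M/z$ in place of $M\setminus z$.

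Now suppose $z$ is neither a loop nor a coloop, and split $\mathcal{B}(M) = \mathcal{B}_0 \uplus \mathcal{B}_1$ according to whether $z \notin B$ or $z \in B$, so that $\mathcal{B}_0 = \mathcal{B}(M\setminus z)$ and $\mathcal{B}_1 = \{B'' \cup z : B'' \in \mathcal{B}(M/z)\}$. Comparing fundamental circuits and cocircuits in $M$ with those in $M\setminus z$ and $M/z$, and using that adjoining or deleting the maximal element never changes a relevant minimum, one checks: for $B \in \mathcal{B}_0$ one has $EA_M(B) = EA_{M\setminus z}(B)$ and $IA_M(B)=IA_{M\setminus z}(B)$, and neither $B$ nor $EA_M(B)$ contains $z$; for $B = B'' \cup z \in \mathcal{B}_1$ one has $EA_M(B) = EA_{M/z}(B'')$ and $IA_M(B)=IA_{M/z}(B'')$, and $z \in B \setminus IA_M(B)$, so $z$ belongs to every set in the interval. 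Hence the $\mathcal{B}_0$-intervals sit inside $2^{E\setminus z}$ and partition it by the inductive hypothesis for $M\setminus z$; the $\mathcal{B}_1$-intervals sit inside $\{S \subseteq E : z \in S\}$ and, after deleting $z$ from every member, become exactly the Crapo intervals for $M/z$, which partition $2^{E\setminus z}$ by the inductive hypothesis. Together these partition $2^E$, proving (1). Statement (2) then follows by intersecting with $IN(M)$: if $S$ is independent with $B - IA(B) \subseteq S \subseteq B \cup EA(B)$ and $S \not\subseteq B$, choose $e \in S\setminus B \subseteq EA(B)$; by the fact above, $\Circ(B,e)\setminus\{e\} \subseteq B \setminus IA(B) \subseteq S$, so $\Circ(B,e) \subseteq S$, contradicting the independence of $S$. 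Thus $[B - IA(B),\, B\cup EA(B)] \cap IN(M) = [B-IA(B),\, B]$, and (2) is the restriction of (1).

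The delicate point is the bookkeeping in the last case: tracking exactly how the activity sets $EA$ and $IA$ transform under deleting and contracting the maximal element, and isolating the degenerate situations in which the maximal element is itself active; everything else is routine. As an alternative for statement (2) by itself, one can argue directly: existence of the interval containing an independent set $I$ follows by greedily enlarging $I$ to a basis $B$ using smallest available elements, which forces $B\setminus I \subseteq IA(B)$; and disjointness follows from a short symmetric exchange argument applied at $\min(B_1 \triangle B_2)$. I do not, however, see a way to obtain (1) that avoids the deletion--contraction induction.
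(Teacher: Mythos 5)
The paper does not prove this theorem; it is cited from Crapo without argument, so there is no in-text proof to compare against. Your deletion--contraction induction is a correct, self-contained proof. I checked the bookkeeping in each branch: when $z=\max E$ is a loop, $z\in EA_M(B)$ for every $B$ and the cocircuits avoid $z$, so $EA_M(B)=EA_{M\setminus z}(B)\cup\{z\}$ and $IA_M(B)=IA_{M\setminus z}(B)$, giving the ``$z$ adjoined freely at the top'' picture; the coloop case is dual. In the generic case the identities $EA_M(B)=EA_{M\setminus z}(B)$, $IA_M(B)=IA_{M\setminus z}(B)$ for $z\notin B$ and $EA_M(B)=EA_{M/z}(B\setminus z)$, $IA_M(B)=IA_{M/z}(B\setminus z)$ for $z\in B$ all hold (for the last one, observe that $\Cocirc_M(B,i)$ misses $z$ because $z\in B-i$, and that $\Circ_M(B,e)\setminus\{z\}=\Circ_{M/z}(B\setminus z,e)$; since $z$ is maximal, removing it never changes the relevant minimum). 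The deduction of (2) from (1) via the auxiliary fact $\Circ(B,e)\setminus\{e\}\subseteq B\setminus IA(B)$ for $e\in EA(B)$ is also correct and is the cleanest way to see that $[B-IA(B),\,B\cup EA(B)]\cap IN(M)=[B-IA(B),\,B]$. Two very small points: you should state the trivial base case $|E|=0$ (or $1$), and your alternative direct proof of (2) --- take the lexicographically smallest basis $B\supseteq I$, and for uniqueness apply \emph{symmetric} basis exchange at $\min(B_1\triangle B_2)$ to get an element of $B_2\setminus B_1$ that is forced out of $IA(B_2)$ --- does go through, and in fact is essentially the same argument underlying the paper's Theorem \ref{matroidshell} on the lexicographic shelling of $IN(M)$ with restriction sets $IP(B)$.
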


The \emph{Tutte polynomial} of $M$ is
\[
T_M(x,y) = \sum_{B \textrm{ basis}} x^{|IA(B)|} y^{|EA(B)|}.
\]
It follows from the work of Crapo and Tutte \cite{Crapo, Tutte} that this polynomial does not depend on the chosen order $<$. The Tutte polynomial is the most important matroid invariant, because it answers an innumerable amount of questions about the combinatorics, algebra, geometry, and topology of matroids and related objects. For more information, see \cite{BrylawskiOxley}.

\bigskip

\noindent \textsf{\textbf{The external activity complex.}} Let $M$ be a matroid on $E$. 
Let $\overline{E} = \{\overline{e} \, : \, e \in E\}$ be a second copy of $E$, and let $[[E]] = E \uplus \overline{E}$. This set of size $2|E|$ will be the ground set of the external activity complex of $M$. 
For each subset $S \subseteq E$ we write $\overline{S} := \{ \overline{s} \, | \, s \in S\} \subset \overline{E}$. Therefore, each subset of $[[E]]$ can be written uniquely in the form $S_1 \cup \overline{S_2}$ for $S_1, S_2 \subseteq E$.

Our main object of study is the following.

\begin{teo} \cite{ArdilaBoocher} 
Let $M = (E, \B)$ be a matroid and let $<$ be a linear order on $E$. $M$. There is a simplicial complex called the {\it external activity complex} $\textrm{Act}_<(M)$ on ground set $[[E]]$ such that
\begin{enumerate}
\item
The facets are
$
F(B):=B\cup EP(B) \cup \overline{B\cup EA(B)}
$
for every basis $B\in {\cal B}$.
\item
The minimal non-faces are $S(\gamma) = c \cup \overline{\gamma-c}$
for every circuit $\gamma$, where $c$ is the $<$-smallest element of $\gamma$.
\end{enumerate}
%\begin{enumerate}
%\item
%The facets are
%\[
%F(B):=B\cup EP(B) \cup \overline{B\cup EA(B)}
%\]
%for every basis $B\in {\cal B}$.
%\item
%The minimal non-faces are
%\[
%S(\gamma) = c \cup \overline{\gamma-c}
%\]
%for every circuit $\gamma$, where $c$ is the $<$-smallest element of $\gamma$.
%\end{enumerate}
\end{teo}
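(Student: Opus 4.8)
The plan is to \emph{construct} $\textrm{Act}_<(M)$ directly from the prescribed non-faces and then to identify its facets. Let $\textrm{Act}_<(M)$ be the simplicial complex on $[[E]]$ whose faces are exactly the subsets of $[[E]]$ that contain no set of the form $S(\gamma)=c\cup\overline{\gamma-c}$, with $\gamma$ a circuit of $M$ and $c=\min(\gamma)$; this is manifestly a simplicial complex. If $S(\gamma)\subseteq S(\gamma')$ then comparing the single un-barred elements gives $\min(\gamma)=\min(\gamma')$ and comparing barred parts gives $\gamma\subseteq\gamma'$, hence $\gamma=\gamma'$ since circuits are incomparable; so the $S(\gamma)$ are pairwise incomparable, and are therefore precisely the minimal non-faces of $\textrm{Act}_<(M)$. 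Thus part (2) of the theorem holds by construction, and the content is part (1): \textbf{the facets of $\textrm{Act}_<(M)$ are exactly the sets $F(B)=B\cup EP(B)\cup\overline{B\cup EA(B)}$, $B\in\B$}. I will prove (a) each $F(B)$ is a face, (b) no face properly contains $F(B)$, and (c) every face is contained in some $F(B)$. Claims (a)--(b) make each $F(B)$ a facet, and by (c) every facet is contained in some $F(B)$ and hence equals it by maximality; as $B=(B\cup EP(B))\cap(B\cup EA(B))$, distinct bases give distinct $F(B)$, so this is exactly part (1).

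\emph{Step (a): the key lemma.} The crux is the statement: \textbf{if $\gamma$ is a circuit with $c=\min(\gamma)$ and $\gamma-c\subseteq B\cup EA(B)$, then $c\in EA(B)$}. Granting it, $F(B)$ contains no $S(\gamma)$, because $S(\gamma)\subseteq F(B)$ would give $\gamma-c\subseteq B\cup EA(B)$, hence $c\in EA(B)$, hence $c\notin B\cup EP(B)$ and $c\notin F(B)$, a contradiction. I would prove the lemma by induction on $|\gamma\cap EA(B)|$. If this is $0$ then $\gamma-c\subseteq B$, so $\gamma\subseteq B\cup c$; since $\gamma$ is dependent it is not contained in $B$, so $c\notin B$ and $\gamma$ is the unique circuit in $B\cup c$, namely $\Circ(B,c)$, giving $c=\min\Circ(B,c)\in EA(B)$. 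If $|\gamma\cap EA(B)|>0$ and, for contradiction, $c\notin EA(B)$, choose $f\in\gamma\cap EA(B)$; then $f\neq c$, so $f>c$, and since $\min\Circ(B,f)=f$ and $\Circ(B,f)\subseteq B\cup f$ we get $c\notin\Circ(B,f)$. Applying the strong circuit elimination property \cite{oxley} to $\gamma$ and $\Circ(B,f)$ at the common element $f$, with $c\in\gamma\setminus\Circ(B,f)$, produces a circuit $\gamma'$ with $c\in\gamma'\subseteq(\gamma\cup\Circ(B,f))-f$. One checks that $\gamma'-c\subseteq B\cup EA(B)$, that $\min(\gamma')=c$ (every element of $\gamma'-c$ lies in $\gamma-c$ or in $\Circ(B,f)-f$, all $>c$), and that $|\gamma'\cap EA(B)|<|\gamma\cap EA(B)|$ (since $\Circ(B,f)\cap EA(B)=\{f\}$, and $f$ is removed). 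The inductive hypothesis then gives $c=\min(\gamma')\in EA(B)$, contradicting the assumption, so $c\in EA(B)$ in all cases.

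\emph{Steps (b) and (c).} The complement of $F(B)$ in $[[E]]$ is $EA(B)\cup\overline{EP(B)}$. For $e\in EA(B)$ the circuit $\Circ(B,e)$ has minimum $e$ with $\Circ(B,e)-e\subseteq B$, so $S(\Circ(B,e))\subseteq e\cup\overline{B}\subseteq F(B)\cup e$; for $\overline{e}$ with $e\in EP(B)$, putting $c=\min\Circ(B,e)$ we have $c\neq e$ and $c\in\Circ(B,e)-e\subseteq B$, so $S(\Circ(B,e))=c\cup\overline{\Circ(B,e)-c}\subseteq B\cup\overline{B}\cup\overline{e}\subseteq F(B)\cup\overline{e}$. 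Thus adjoining any element of $[[E]]\setminus F(B)$ to $F(B)$ creates a non-face, which is (b). For (c), let $S_1\cup\overline{S_2}$ be a face ($S_1,S_2\subseteq E$). By part (1) of Crapo's Theorem~\ref{crapo}, $S_2$ lies in a unique interval $[B_0-IA(B_0),\,B_0\cup EA(B_0)]$, so $S_2\subseteq B_0\cup EA(B_0)$ and $B_0-IA(B_0)\subseteq S_2$; it remains to show $S_1\cap EA(B_0)=\emptyset$, for then $S_1\subseteq B_0\cup EP(B_0)$ and $S_1\cup\overline{S_2}\subseteq F(B_0)$. If $e\in S_1\cap EA(B_0)$, then $\Circ(B_0,e)\cap IA(B_0)=\emptyset$: a common element $i$ would lie in $B_0$ and satisfy $i=\min\Cocirc(B_0,i)\le e$ (as $e\in\Cocirc(B_0,i)$) and $e=\min\Circ(B_0,e)\le i$, forcing $i=e$, which is absurd since $e\notin B_0$. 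Hence $\Circ(B_0,e)-e\subseteq B_0-IA(B_0)\subseteq S_2$, so $S(\Circ(B_0,e))\subseteq S_1\cup\overline{S_2}$, contradicting that this set is a face.

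\emph{The main obstacle} is the key lemma of Step (a): the reduction to parts (1)--(2) is formal and Steps (b)--(c) are quick consequences of the definitions and Crapo's partition theorem, but the lemma requires carefully arranging the circuit eliminations so that the induction parameter $|\gamma\cap EA(B)|$ provably strictly decreases while the hypotheses $c=\min(\gamma)$ and $\gamma-c\subseteq B\cup EA(B)$ are maintained.
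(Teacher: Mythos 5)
The paper does not prove this theorem; it is imported verbatim from \cite{ArdilaBoocher} as background material, so there is no in-paper argument to compare against. Your self-contained derivation is correct. The reduction to showing that the $S(\gamma)$ are pairwise incomparable and that the maximal faces of the complex they define are exactly the $F(B)$ is clean, and the three steps hold up: the key lemma of Step (a) (if $c=\min\gamma$ and $\gamma-c\subseteq B\cup EA(B)$ then $c\in EA(B)$) is established by a valid induction on $|\gamma\cap EA(B)|$ using strong circuit elimination, with the inequalities $\min\Circ(B,f)=f>c$ and $\Circ(B,f)\cap EA(B)=\{f\}$ doing exactly the work needed to keep the hypotheses and strictly drop the parameter; Step (b) exhibits the correct witnessing non-faces $S(\Circ(B,e))$ in both the barred and unbarred cases; and Step (c) is a nice use of part (1) of Crapo's interval decomposition applied to $S_2$, with the observation $\Circ(B_0,e)\cap IA(B_0)=\emptyset$ for $e\in EA(B_0)$ closing the argument. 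One small presentational point: the ``base case'' $|\gamma\cap EA(B)|=0$ in your lemma is in fact vacuous (since it derives $c\in EA(B)$, contradicting the case hypothesis), but the derivation is still a valid proof of the implication in that case, so nothing breaks; you could equivalently induct on $|(\gamma-c)\cap EA(B)|$ to avoid the appearance of paradox.
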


The complement of the facet $F(B)$ in $[[E]]$ is %denoted by $G(B)$ and is given by 
$G(B) = EA(B) \cup \overline{EP(B)}$.

\bigskip

\noindent \textsf{\textbf{Las Vergnas's three active orders.}}
Given a matroid $M=(E,\B)$ and a total order $<$ on the ground set of $M$, LasVergnas introduced the following three \emph{active orders}. In each case, he proved that there are several equivalent definitions.

\begin{defi} \label{ext}
The \emph{external order} $<_{ext}$ on $\B$ is characterized by the following equivalent properties for two bases $A$ and $B$:

1. $A \leq_{ext} B$,

2. $A \subseteq B \cup EA(B)$,

3. $A \cup EA(A) \subseteq B \cup EA(B)$,

4. $B$ is the lexicographically largest basis contained in $A \cup B$.

\noindent 
This poset is graded with $r(B) = |EA(B)|$. Adding a minimum element turns it into a lattice.
\end{defi}

%
%
%\begin{defi} \label{ext}
%The \emph{external order} $<_{ext}$ on $\B$ is characterized by the following equivalent properties for two bases $A$ and $B$:
%\begin{enumerate}
%\item
%$A \leq_{ext} B$,
%\item
%$A \subseteq B \cup EA(B)$,
%\item
%$A \cup EA(A) \subseteq B \cup EA(B)$,
%\item
%$B$ is the lexicographically largest basis contained in $A \cup B$.
%\end{enumerate}
%This poset is graded with $r(B) = |EA(B)|$. Adding a minimum element turns it into a lattice.
%\end{defi}

\begin{defi} \label{int}
The \emph{internal order} $<_{int}$ on $\B$ is characterized by the following equivalent properties for two bases $A$ and $B$:

1. $A \leq_{int} B,$

2. $A  - IA(A) \subseteq B,$

3. $A -IA(A) \subseteq B - IA(B),$

4. $A$ is the lexicographically smallest basis containing $A \cap B$.

\noindent This poset is graded with $r(B) = r-|IA(B)|$. Adding a maximum element turns it into a lattice.
\end{defi}

%\begin{defi} \label{int}
%The \emph{internal order} $<_{int}$ on $\B$ is characterized by the following equivalent properties for two bases $A$ and $B$:
%\begin{enumerate}
%\item
%$A \leq_{int} B,$
%\item
%$A  - IA(A) \subseteq B,$
%\item
%$A -IA(A) \subseteq B - IA(B),$
%\item
%$A$ is the lexicographically smallest basis containing $A \cap B$.
%\end{enumerate}
%This poset is graded with $r(B) = r-|IA(B)|$. Adding a maximum element turns it into a lattice.
%\end{defi}

The internal and external orders are consistent in the sense that $A \leq_{int} B$ and $B \leq_{ext} A$ imply $A=B$. Therefore the following definition makes sense.

\begin{defi} \label{extint}
The \emph{external/internal order} $<_{ext/int}$ is the weakest order which simultaneously extends the external and the internal order. It is characterized by the following equivalent properties for two bases $A$ and $B$:

1. $A \leq_{ext/int} B,$

2. $IP(A) \cap EP(B) = \emptyset,$

\noindent This poset is a lattice. It is not necessarily graded.
\end{defi}

Note that Theorem \ref{ext}.4 and \ref{int}.4 imply the following.

\begin{prop} \label{lexintext}
The lexicographic order $<_{lex}$ on $\B$ is a linear extension of the three posets $<_{int}, <_{ext},$ and $<_{ext/int}$. In symbols, any of $A<_{int} B$, $A<_{ext} B$ or $A <_{ext/int} B$ implies $A <_{lex} B$.
\end{prop}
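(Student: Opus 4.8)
The plan is to establish the two containments of relations $<_{ext}\,\subseteq\,<_{lex}$ and $<_{int}\,\subseteq\,<_{lex}$ directly from the lexicographic descriptions in Definitions \ref{ext} and \ref{int}, and then to deduce the statement for $<_{ext/int}$ purely formally. Throughout we use that $<_{lex}$ is a total order on the bases $\B$, since two distinct bases of the same size over a totally ordered ground set are comparable lexicographically.

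For the external order, I would argue as follows. Suppose $A <_{ext} B$; in particular $A \neq B$. By characterization 4 of Definition \ref{ext}, $B$ is the lexicographically largest basis contained in $A \cup B$. Since $A$ is itself a basis contained in $A \cup B$ and $A \neq B$, it cannot be the lexicographically largest one, so $A <_{lex} B$. Symmetrically, for the internal order, suppose $A <_{int} B$, so $A \neq B$. By characterization 4 of Definition \ref{int}, $A$ is the lexicographically smallest basis containing $A \cap B$; since $B$ is a basis containing $A \cap B$ with $B \neq A$, we conclude $A <_{lex} B$. Thus the total order $<_{lex}$ is a linear extension of both $<_{ext}$ and $<_{int}$.

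Finally, by Definition \ref{extint}, the order $<_{ext/int}$ is the weakest order that simultaneously extends $<_{ext}$ and $<_{int}$, i.e., it is contained in every order on $\B$ that extends both of them. Since $<_{lex}$ is one such order by the preceding paragraph, $<_{ext/int}\,\subseteq\,<_{lex}$, which is precisely the assertion that $A <_{ext/int} B$ implies $A <_{lex} B$. This completes the plan.

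I do not expect a genuine obstacle here: the argument is immediate once the characterizations in Definitions \ref{ext}.4 and \ref{int}.4 are invoked. The only points that demand a little care are to fix the convention for $<_{lex}$ so that the phrases ``lexicographically largest basis'' and ``lexicographically smallest basis'' in Definitions \ref{ext} and \ref{int} are interpreted consistently with it, and to observe that the strict hypotheses $A \neq B$ in the first two cases force the comparisons in the total order $<_{lex}$ to be strict.
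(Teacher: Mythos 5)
Your argument is correct and is precisely the reasoning the paper has in mind: the paper offers no separate proof, merely remarking that the claim follows from characterizations 4 of Definitions \ref{ext} and \ref{int}, and you have filled in those steps (plus the formal deduction for $<_{ext/int}$ from its definition as the weakest common extension) in the expected way.
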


\subsection{\textsf{Shellability and the $h$-vector}}

\noindent \textsf{\textbf{Shellability.}} Shellability is a combinatorial condition on a simplicial complex that allows us to describe its topology easily. A simplicial complex is shellable if it can be built up by introducing one facet at a time, so that whenever we introduce a new facet, its intersection with the previous ones is pure of codimension 1. More precisely:

\begin{defi}
Let $\Delta$ be a pure %$(d-1)$-dimensional 
simplicial complex. 
A \emph{shelling order} is an order of the facets $F_1, \dots F_k$ such for every $i<j$ there exist $k<j$ and $f\in F_j$ such that $F_i \cap F_j \subseteq F_k\cap F_j = F_j-f$. 
If a shelling order exists, then we call $\Delta$ \emph{shellable}.
\end{defi}
%
%We say that a complex $\Delta$ is {\it shellable} if it has a shelling order. Informally, this means that we can build up the complex by introducing one facet at a time; and whenever we introduce a new facet $F_j$, its intersection with the previous ones is pure of codimension 1.

Given a shelling order and a facet $F_j$, there is a subset $\RR(F_j)$ such that for every $A \subseteq F_j$, we have 
$A \nsubseteq F_i$ for all $i<j$ if and only if $\RR(F_j) \subseteq A$. 
%
%the following are equivalent: \begin{itemize} 
%\item $A$ is not a subset of $F_i$ for every $i<j$. 
%\item $\RR(F_j) \subseteq A$. 
%\end{itemize}
Equivalently, when we add facet $F_j$ to the complex, the new faces that we introduce are precisely those in the interval  $[\RR(F_j), F_j]$. The set $\RR(F_j)$ is called the {\it restriction set} of $F_j$ in the shelling.

\medskip

\noindent \textsf{\textbf{The $f$-vector and $h$-vector.}} The \emph{$f$-vector} of a $(d-1)$-dimensional simplicial complex $\Delta$ is $(f_0, \ldots, f_d)$ where $f_i$ is the number of faces of $\Delta$ of size $i$. The \emph{$h$-vector} $(h_0, \ldots, h_d)$ is an equivalent way of storing this information; it is defined by the relation
\[
f_0(x-1)^d + f_1(x-1)^{d-1} + \cdots + f_d(x-1)^0 = h_0x^d + h_1 x^{d-1} + \cdots + h_dx^0.
\]
This polynomial is also known as the \emph{shelling polynomial} $h_\Delta(x)$, due to the following description of the $h$-vector for shellable complexes.

\begin{prop}\label{shellingh} \cite[Proposition 7.2.3]{bjorner}
If $F_1, \ldots, F_k$ is a shelling order for a $(d-1)$-dimensional simplicial complex $\Delta$, then 
\[
h_i:= \left| \, \{j \, : \,   |\RR(F_j)| = i\}\right|.
\]
\end{prop}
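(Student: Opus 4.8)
The plan is to derive the statement from the fundamental partition of the faces of a shellable complex, followed by a short binomial identity. First I would record that partition. Since $\Delta$ is pure of dimension $d-1$ and $F_1,\dots,F_k$ is a shelling order, each facet satisfies $|F_j|=d$ and comes equipped with its restriction set $\RR(F_j)$, characterized (as recalled just above) by the property that among the subsets of $F_j$, those contained in no earlier facet $F_i$ with $i<j$ are exactly the ones containing $\RR(F_j)$; in particular $\RR(F_1)=\emptyset$. Because $\Delta$ is built up one facet at a time and every face of $\Delta$ lies in some facet, each face is ``new'' exactly once, so the face set of $\Delta$ is the disjoint union of the Boolean intervals $[\RR(F_j),F_j]$ for $j=1,\dots,k$.

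Next I would count faces by cardinality. Set $r_j:=|\RR(F_j)|$ and $a_r:=|\{\,j:r_j=r\,\}|$. The number of faces $A$ with $\RR(F_j)\subseteq A\subseteq F_j$ and $|A|=i$ is $\binom{d-r_j}{i-r_j}$, so summing over the partition gives
\[
f_i \;=\; \sum_{j=1}^k \binom{d-r_j}{\,i-r_j\,}\;=\;\sum_{r=0}^d a_r\binom{d-r}{\,i-r\,}.
\]
Substituting this into the defining relation of the $h$-vector and simplifying with the binomial theorem,
\[
\sum_{i=0}^d f_i(x-1)^{d-i}
=\sum_{r=0}^d a_r\sum_{i=r}^d\binom{d-r}{\,i-r\,}(x-1)^{d-i}
=\sum_{r=0}^d a_r\bigl((x-1)+1\bigr)^{d-r}
=\sum_{r=0}^d a_r\,x^{d-r}.
\]
Comparing coefficients with $\sum_i h_i x^{d-i}$ yields $h_i=a_i=|\{\,j:|\RR(F_j)|=i\,\}|$, which is exactly the claim. (As a sanity check, $F_1$ contributes $x^d$, giving $h_0=1$, consistent with $f_0=1$.)

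The computation above is routine; the only place that genuinely uses the shelling hypothesis — and hence the only step I expect to require care — is the first one, namely the existence of a restriction set $\RR(F_j)$ whose defining ``upward-closed inside $F_j$'' property makes the new faces form the interval $[\RR(F_j),F_j]$. This is where the codimension-one intersection condition enters: it forces $\langle F_1,\dots,F_{j-1}\rangle\cap\langle F_j\rangle$ to be the subcomplex of $\partial F_j$ generated by $\{F_j-f : f\in\RR(F_j)\}$, so that a subset $A\subseteq F_j$ lies in an earlier facet precisely when $A\not\supseteq\RR(F_j)$. Alternatively, one can avoid isolating the partition and argue purely by induction on $k$, adding one facet at a time and tracking the change in each $f_i$, and hence in the $h$-polynomial; this reorganizes the same content but still rests on the same existence statement for $\RR(F_j)$.
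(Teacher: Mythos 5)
Your argument is correct. The paper does not reprove this statement but cites it directly from Bj\"orner \cite[Proposition 7.2.3]{bjorner}; your derivation --- partitioning the faces of $\Delta$ into the Boolean intervals $[\RR(F_j),F_j]$, counting faces by size, and collapsing the resulting sum via the binomial theorem --- is the standard textbook proof of that cited result, and your closing remark correctly isolates the one place the shelling hypothesis is genuinely used, namely the existence and characterizing property of the restriction sets $\RR(F_j)$.
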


% If $\dim \Delta = d-1$, then the \emph{$h$-vector} $(h_0, h_1, \dots h_d)$ is defined by: \[h_i:= |\{j\, : \,   |\RR(F_j)| = i\}|\]

Note that it is not clear a priori that these numbers should be the same for any shelling order.

Understanding the topology of a shellable simplicial complex is easy once we know the last entry of the $h$-vector, thanks to the following result.

%\comment{No me gusta la referencia. DespuŽs de discutir con Isabella, llegamos a la conclusion de que por ahora lo mejor es poner el paper de non-pure shellability de bjorner y wachs. Ahi ellos mencionan este teorema sin referencia, pero dan una prueba para complejos no necesariamente puros.}

\begin{theorem}\cite[Theorem 12.2(2)]{Kozlov} \label{contraible}Any geometric realization of a $(d-1)$-dimensional shellable simplicial complex $\Delta$ is homotopy equivalent to a wedge  of $h_d$ spheres of dimension $d-1$. In particular, if $h_d = 0$, then every geometric realization of $\Delta$ is contractible. 
\end{theorem}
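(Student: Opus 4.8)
\medskip
\noindent\textbf{\textsf{Proof strategy for Theorem \ref{contraible}.}}
The plan is to induct on the length $k$ of a shelling order $F_1,\dots,F_k$ of $\Delta$, keeping track simultaneously of the homotopy type and of the number of facets whose restriction set is as large as possible. Since any two geometric realizations of a finite simplicial complex are homeomorphic, it suffices to treat one of them. For $1\le j\le k$ let $\Delta_j$ be the subcomplex generated by $F_1,\dots,F_j$, so that $\Delta_1$ is a single $(d-1)$-simplex, $\Delta_k=\Delta$, and $\Delta_j=\Delta_{j-1}\cup\overline{F_j}$, where $\overline{F_j}$ is the closed simplex on the vertex set $F_j$. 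By the defining property of the restriction set recalled above, a face $A\subseteq F_j$ lies in $\Delta_{j-1}$ precisely when $\RR(F_j)\not\subseteq A$, so
\[
\Delta_{j-1}\cap\overline{F_j}=\{\,A\subseteq F_j\ :\ \RR(F_j)\not\subseteq A\,\},
\]
the ``anti-star'' of the face $\RR(F_j)$ in the simplex $\overline{F_j}$. Note also that for $j\ge 2$ the empty set belongs to $F_1$, so $\RR(F_j)\ne\emptyset$.

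The inductive step splits according to whether $|\RR(F_j)|<d$ or $|\RR(F_j)|=d$. If $|\RR(F_j)|<d$, then $\RR(F_j)$ is a nonempty \emph{proper} face of $\overline{F_j}$; choosing a vertex $u\in F_j\setminus\RR(F_j)$, one checks that $A\cup\{u\}$ again fails to contain $\RR(F_j)$ whenever $A$ does (because $u\notin\RR(F_j)$), so $u$ is a cone apex for $\Delta_{j-1}\cap\overline{F_j}$, which is therefore contractible. Since $\overline{F_j}$ is a simplex and the inclusion $\Delta_{j-1}\cap\overline{F_j}\hookrightarrow\overline{F_j}$ is a cofibration between contractible complexes, it is a homotopy equivalence; hence $\Delta_{j-1}\cap\overline{F_j}$ is a deformation retract of $\overline{F_j}$, and extending such a retraction by the identity on $\Delta_{j-1}$ exhibits $\Delta_{j-1}$ as a deformation retract of $\Delta_j$, so $\Delta_j\simeq\Delta_{j-1}$. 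If instead $|\RR(F_j)|=d$, then $\RR(F_j)=F_j$, every proper subset of $F_j$ already lies in $\Delta_{j-1}$, and so $\Delta_{j-1}\cap\overline{F_j}=\partial\overline{F_j}\cong S^{d-2}$: thus $\Delta_j$ is obtained from $\Delta_{j-1}$ by attaching a single $(d-1)$-cell along the inclusion $\phi\colon S^{d-2}=\partial\overline{F_j}\hookrightarrow\Delta_{j-1}$. By the induction hypothesis $\Delta_{j-1}$ is homotopy equivalent to a wedge of $(d-1)$-spheres, which is $(d-2)$-connected, so $\phi$ is null-homotopic and $\Delta_j\simeq\Delta_{j-1}\vee S^{d-1}$. (The low-dimensional cases are covered by the same statement under the usual conventions: for $d=1$ one has $\partial\overline{F_j}=\emptyset$ and is simply adjoining an isolated point, so that $\Delta_j\simeq\Delta_{j-1}\sqcup\{\mathrm{pt}\}=\Delta_{j-1}\vee S^{0}$; for $d=2$, ``$(d-2)$-connected'' means path-connected, which is exactly what the null-homotopy of a map $S^{0}\to\Delta_{j-1}$ needs.)

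Assembling the induction, each facet with $|\RR(F_j)|<d$ leaves the homotopy type unchanged while each facet with $|\RR(F_j)|=d$ wedges on a copy of $S^{d-1}$; hence $\Delta=\Delta_k$ is homotopy equivalent to a wedge of $N$ spheres of dimension $d-1$, where $N=\#\{\,j:|\RR(F_j)|=d\,\}$. By Proposition \ref{shellingh} this number $N$ is exactly $h_d$, which proves the first assertion; the ``in particular'' clause is the case $h_d=0$, in which no facet triggers the second alternative and $\Delta\simeq\Delta_1$ is contractible.

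The only genuinely homotopy-theoretic input --- and the step I expect to be the crux --- is the claim, in the $|\RR(F_j)|=d$ case, that the attaching sphere $S^{d-2}$ maps into $\Delta_{j-1}$ null-homotopically; this is where the induction hypothesis (the high connectivity of a wedge of $(d-1)$-spheres) is essential, and it is the reason the codimension-$1$ condition in the definition of shelling is precisely the hypothesis needed to push all the new homology into the top dimension. The remaining points --- the cone and deformation-retract claims in the other case, the cofibration bookkeeping, and the degenerate dimensions $d\in\{1,2\}$ --- are routine but should be stated with some care.
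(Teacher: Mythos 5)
Your proof is correct. The paper offers no argument for Theorem \ref{contraible} --- it is quoted directly from Kozlov's book --- and your induction is essentially the standard proof given there (and in Bj\"orner's treatment): adding a facet with $\RR(F_j)\subsetneq F_j$ glues along a contractible anti-star (a cone) and leaves the homotopy type unchanged, while a facet with $\RR(F_j)=F_j$ attaches a $(d-1)$-cell along its boundary sphere, whose attaching map is null-homotopic by the $(d-2)$-connectivity supplied by the induction hypothesis, so that counting these facets via Proposition \ref{shellingh} yields the wedge of $h_d$ spheres of dimension $d-1$.
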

An important property for matroids is their shellability: 

\begin{theorem}\cite[Theorem 7.3.3]{bjorner} \label{matroidshell}
The lexicographic order $<_{lex}$ on the bases of a matroid $M$ gives a shelling order of the independence complex $IN(M)$. Furthermore, the restriction set of a basis $B$ in this shelling order is given by $IP(B)$. 
\end{theorem}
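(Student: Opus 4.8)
The plan is to derive the theorem from two ingredients already available: Crapo's decomposition of the independence complex (Theorem~\ref{crapo}(2)) and Las Vergnas's characterization of the internal order (Definition~\ref{int} and Proposition~\ref{lexintext}). By Theorem~\ref{crapo}(2), as $B$ ranges over the bases of $M$ the intervals $[IP(B), B]$ partition the face poset of $IN(M)$, so every independent set $I$ lies in a unique such interval. This tells us both that the faces contributed by each basis will form an interval and what the restriction sets should be; the only real task is to check that the lexicographic order on bases is compatible with this partition.

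The key step, which I would isolate as a lemma, is the following: \emph{if $I$ is an independent set contained in a basis $B$, and $B'$ is the unique basis with $IP(B') \subseteq I \subseteq B'$, and $B' \neq B$, then $B' <_{lex} B$.} Its proof is short: from $B' - IA(B') = IP(B') \subseteq I \subseteq B$ and condition (2) of Definition~\ref{int} we get $B' \leq_{int} B$, hence $B' <_{int} B$ since $B' \neq B$, and then $B' <_{lex} B$ by Proposition~\ref{lexintext}.

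Granting the lemma, let $B_1 <_{lex} \cdots <_{lex} B_k$ be the bases in lexicographic order and fix $j$. I would prove that the faces appearing for the first time at step $j$, namely $\{I \in IN(M) : I \subseteq B_j,\ I \not\subseteq B_i \text{ for all } i < j\}$, are exactly the faces of $[IP(B_j), B_j]$. For the inclusion ``$\supseteq$'', if some $I$ with $IP(B_j) \subseteq I \subseteq B_j$ also satisfied $I \subseteq B_i$ for some $i < j$, then $B_j$ is the unique basis with $I \in [IP(B_j),B_j]$, so the lemma (with $B = B_i$, $B' = B_j$) gives $B_j <_{lex} B_i$, contradicting $i < j$. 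For ``$\subseteq$'', given $I \subseteq B_j$ with $I \not\subseteq B_i$ for all $i < j$, let $B'$ be the unique basis with $I \in [IP(B'), B']$; if $B' \neq B_j$, the lemma (with $B = B_j$) gives $B' <_{lex} B_j$, so $B' = B_i$ for some $i < j$ and then $I \subseteq B' = B_i$, a contradiction. Hence the new faces at step $j$ are exactly $[IP(B_j), B_j]$, whose unique minimal element is $IP(B_j)$; this shows $B_1, \dots, B_k$ is a shelling order with $\RR(B_j) = IP(B_j)$. (For $j \geq 2$ one should also note $IP(B_j) \neq \emptyset$, which holds because $\emptyset \subseteq B_1$ with $1 < j$ forces $\emptyset \notin [IP(B_j), B_j]$.)

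I expect the only obstacle to be conceptual: realizing that Crapo's internal-activity partition and Las Vergnas's internal order are exactly the right pair of tools, after which the argument is bookkeeping. There is an alternative, more elementary proof of the lemma that avoids citing the internal order --- one shows that for $a \in B' \setminus B$ the fundamental cocircuit $\Cocirc(B', a)$ meets $B \setminus B'$ (an exchange argument in $M^*$) and uses internal activity to compare $B'$ and $B$ coordinatewise in the lexicographic order --- but it is messier and requires committing to a precise lexicographic convention, so I would present the structural argument above.
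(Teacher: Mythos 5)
The paper cites this result to Bj\"orner without proof, so there is no internal proof to compare against; your argument stands as a genuine alternative derivation, and it is correct. It is also a different route from Bj\"orner's, who verifies the codimension-one intersection condition directly by a basis-exchange argument. You instead derive the shelling property and the restriction sets simultaneously from Crapo's partition theorem (Theorem~\ref{crapo}.2), that the intervals $[IP(B),B]$ partition $IN(M)$, together with LasVergnas's characterization of the internal order: your key lemma, that the Crapo basis $B'$ of an independent set $I\subseteq B$ satisfies $B'\leq_{lex} B$, follows cleanly from Definition~\ref{int}.2 and Proposition~\ref{lexintext}, and the two inclusions you check from it are exactly what is needed. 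This is in the same spirit as the paper's later Proposition~\ref{lemR}, which also invokes Crapo's theorem to identify the restriction sets, but the paper uses that proposition only after establishing shellability separately (Theorem~\ref{intshell} via Lemma~\ref{minimo}), whereas you obtain both facts in one step. The trade-off is that your route leans on the equivalence of LasVergnas's several characterizations of $<_{int}$, which is nontrivial machinery (though proved independently of the shelling result, so there is no circularity), while Bj\"orner's proof is more elementary and self-contained.
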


A straightforward consequence of the previous theorem is that the internal order poset is equal to the poset of bases of $M$ where the order is given by inclusion of restriction sets of the lexicographic shelling order.

\section{\textsf{Example}}\label{sec:example}

Before proving our theorems, we illustrate them in an example. 
Consider the graphical matroid given by the graph of Figure \ref{matroide}. Its bases are all the $3$-subsets of $[5]$ except $\{1,2,3\}$ and $\{1,4,5\}$.
Under the standard order $1<2<3<4<5$ on the ground set, Table \ref{table1} records the basis activity of the various bases.

\begin{figure}[h!]
\centering
\includegraphics[scale=0.45]{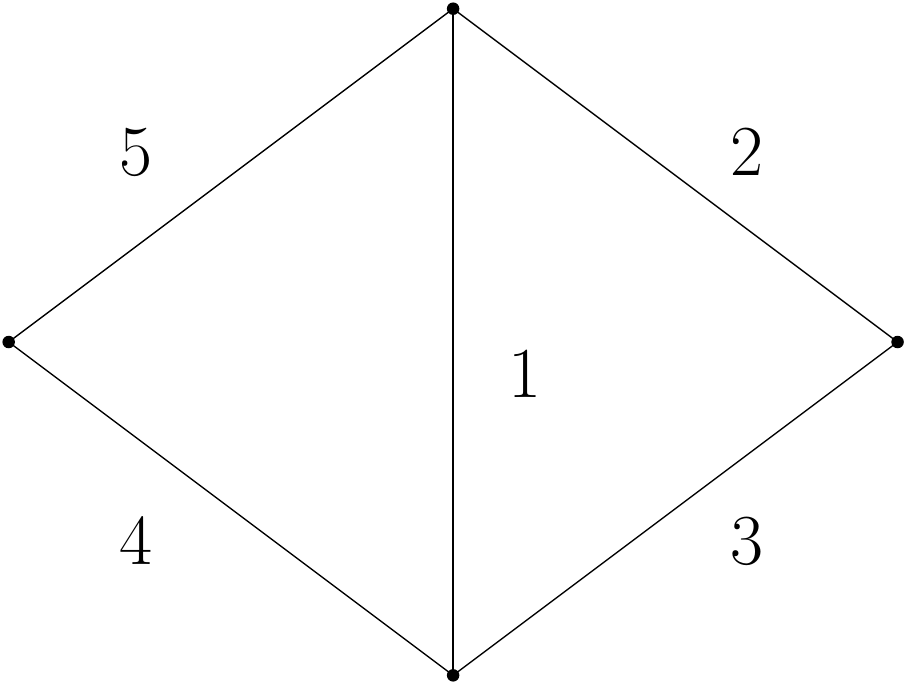}
\caption{A graphical matroid.}
\label{matroide}
\end{figure}

%; it lists each basis $B$, 
%the externally passive elements $EP(B)$,
%the externally active elements $EA(B)$,
%the internally passive elements $IP(B)$, and
%the internally active elements $IA(B)$. 

%We have the following data for the independent complex $IN(M)$:
%\begin{center}
%\begin{tabular}{| l | c | c | c |}
%\hline
%Bases & Ext. Passive & Ext. Active & Restriction Set \\ \hline
%$\left\{1,2,4\right\}$ & $\left\{3,5\right\}$ & $\emptyset$ & $\emptyset$ \\ \hline
%$\left\{1,2,5\right\}$ & $\left\{4,5\right\}$ & $\emptyset$ &  $\left\{5\right\}$ \\ \hline
%$\left\{1,3,4\right\}$ & $\left\{2,5\right\}$ & $\emptyset$ &  $\left\{3\right\}$ \\ \hline
%$\left\{1,3,5\right\}$ & $\left\{2,4\right\}$ & $\emptyset$ &  $\left\{3,5\right\}$ \\ \hline
%$\left\{2,3,4\right\}$ & $\left\{5\right\}$ & $\left\{1\right\}$  &  $\left\{2,3\right\}$\\ \hline
%$\left\{2,3,5\right\}$ & $\left\{4\right\}$ & $\left\{1\right\}$ &  $\left\{2,3,5\right\}$ \\ \hline
%$\left\{2,4,5\right\}$ & $\left\{3\right\}$ & $\left\{1\right\}$ &  $\left\{4,5\right\}$ \\ \hline
%$\left\{3,4,5\right\}$ & $\emptyset$ & $\left\{1,2\right\}$ &  $\left\{3,4,5\right\}$ \\ \hline
%\end{tabular}
%\end{center}
\begin{table}[h]
\begin{center}
\begin{tabular}{| c || c | c | c | c |}
\hline
$B$ & $EP(B)$ & $EA(B)$ & $IP(B)$ & $IA(B)$\\
\hline
%Bases & Ext. Passive & Ext. Active & Restriction Set \\ \hline
$124$ & $35$ & $\emptyset$ & $\emptyset$ &124 \\ \hline
$125$ & $45$ & $\emptyset$ &  $5$ & 12\\ \hline
$134$ & $25$ & $\emptyset$ &  $3$ & 14 \\ \hline
$135$ & $24$ & $\emptyset$ &  $35$& 1 \\ \hline
$234$ & $5$ & $1$  &  $23$& 4 \\ \hline
$235$ & $4$ & $1$ &  $235$ & $\emptyset$ \\ \hline
$245$ & $3$ & $1$ &  $45$ & 2 \\ \hline
$345$ & $\emptyset$ & $12$ &  $345$ & $\emptyset$\\ \hline
\end{tabular}
 \end{center}
\caption{The bases $B$ together with their sets of externally passive, externally active, internally passive, and internally active elements.
 \label{table1}}
 \end{table}

The resulting internal, external, and external/internal orders $<_{int}, <_{ext}, <_{ext/int}$ are shown in Figure \ref{orders}. By Theorems \ref{ext}, \ref{int}, and \ref{extint}, these three orders are isomorphic to the three families of sets 
$\{B \cup EA(B) \, : \, B \textrm{ basis}\}$,
$\{B-IA(B) \, : \, B \textrm{ basis}\}$, and
$\{B \cup EA(B) -IA(B) \, : \, B \textrm{ basis}\}$,
partially ordered by containment.

\begin{figure}[h!]
\centering
\includegraphics[scale=0.4]{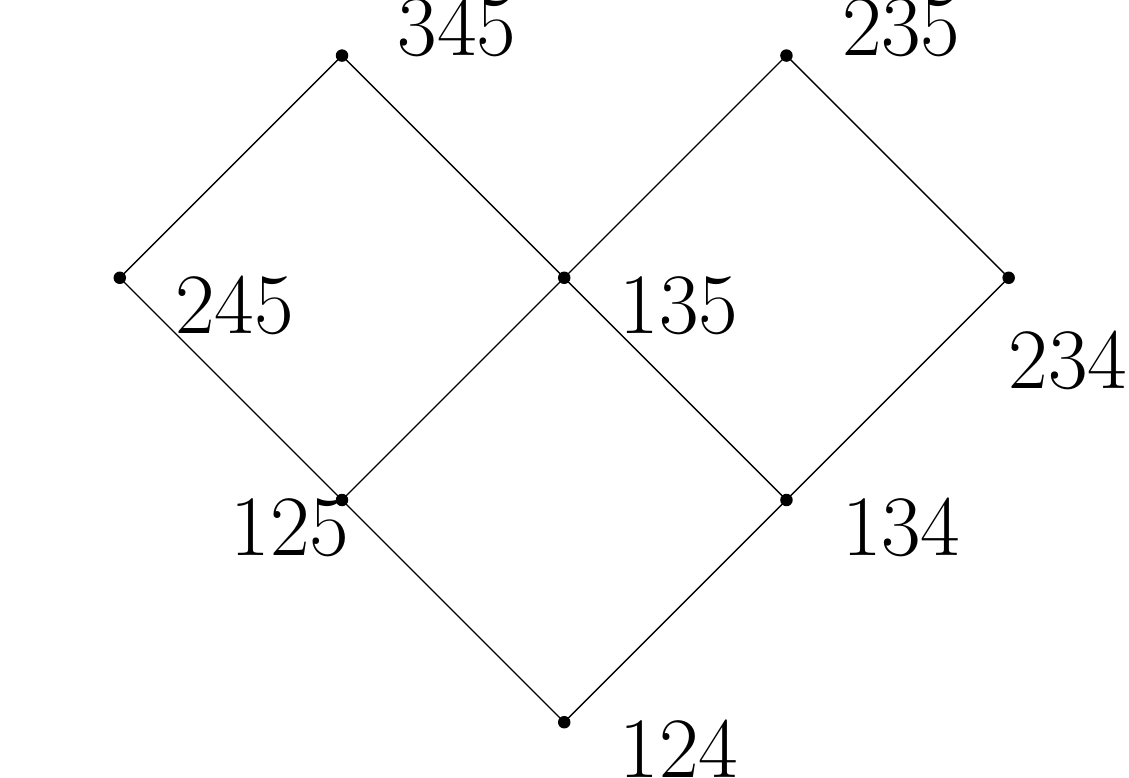} \quad
\includegraphics[scale=0.4]{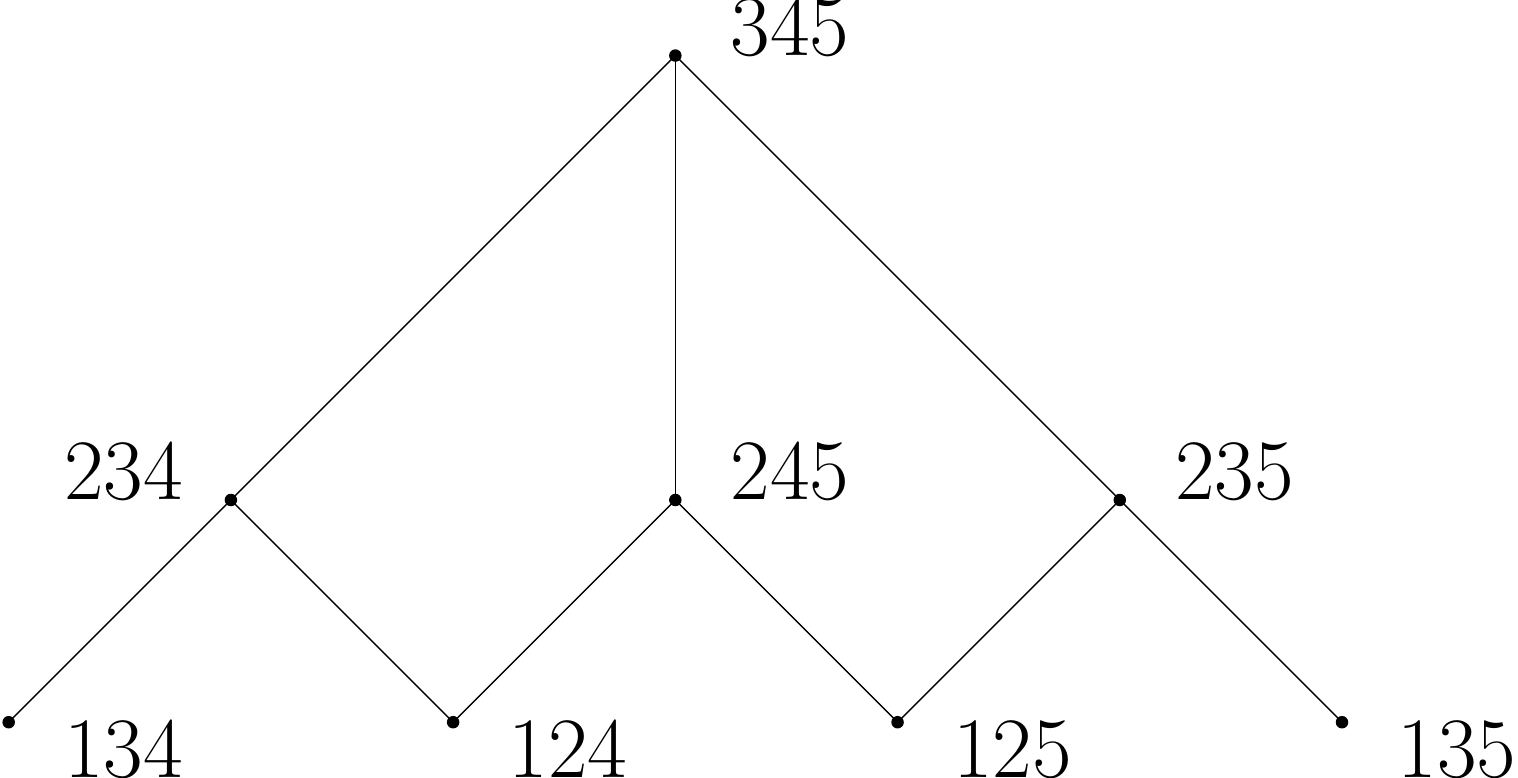} \quad
\includegraphics[scale=0.4]{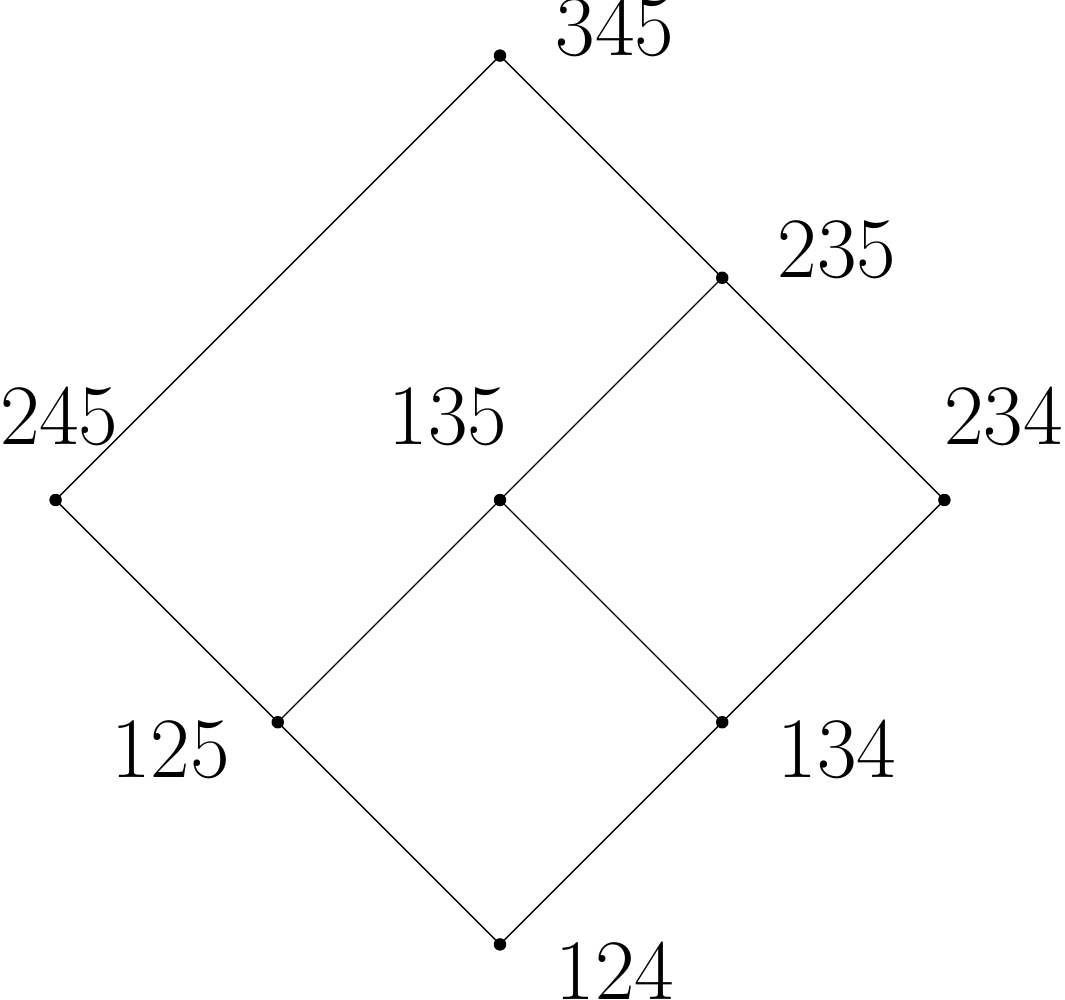}
\caption{The active orders $<_{int}, <_{ext}$, and $<_{ext/int}$, respectively.}
\label{orders}
\end{figure}
%
%\begin{figure}[h!]
%\centering
%\includegraphics[scale=0.6]{ExtInt.pdf}
%\caption{The external/internal order $<_{ext/int}$.}
%\label{intextorder}
%\end{figure}

Table \ref{table1} lists the bases in lexicographic order $<_{lex}$, and this is a shelling order for the independence complex $IN(M)$ by Theorem \ref{matroidshell}. The restriction set for each basis $B$ is $\RR(B) = IP(B)$. For example, when we add facet $134$ in the third step of the shelling, this means that the new faces that appear are the four sets in the interval $[\RR(134),134] = [3,134]$; that is, faces $3, 13, 34,$ and $134$.

\bigskip

Our goal is to shell the external activity complex $\textrm{Act}_<(M)$ whose facets, listed in Table \ref{table2}, are the sets $F(B) = B \cup EP(B) \cup \overline{B \cup EA(B)}$. Since $\overline{1},3,4,$ and $5$ are in all facets of $\textrm{Act}_<(M)$, we remove them, and shell the resulting \emph{reduced external activity complex} ${Act}^\bullet_<(M)$. 
Our main result, Theorem \ref{main}, states that any linear extension of the external/internal order $<_{ext/int}$ gives a shelling order for this complex. For example, we may again consider the lexicographic order,
%\[
%124, 125, 134, 135, 234, 235, 245, 345
%\]
which is indeed a linear extension of $<_{ext/int}$.

 \begin{table}[h]
\begin{center}
\begin{tabular}{|c|c|c|c|}
\hline
$B$ & $F(B)$ & ${F(B)}^\bullet$ & $\RR(F(B))$  \\
%& & & 
%\vspace{-.1cm}\\
\hline
%&&&\\
124 & $12345\overline{1}\overline{2}\overline{4}$ & $12\overline{2}\overline{4}$ & $\emptyset$ \\ 
125 & $12345\overline{1}\overline{2}\overline{5}$ & $12\overline{2}\overline{5}$ & $\overline{5}$ \\ 
134 & $12345\overline{1}\overline{3}\overline{4}$ & $12\overline{3}\overline{4}$ & $\overline{3}$ \\ 
135 & $12345\overline{1}\overline{3}\overline{5}$ & $12\overline{3}\overline{5}$ & $\overline{3}\overline{5}$ \\ 
234 & $2345\overline{1}\overline{2}\overline{3}\overline{4}$ & $2\overline{2}\overline{3}\overline{4}$ & $\overline{2}\overline{3}$ \\ 
235 & $2345\overline{1}\overline{2}\overline{3}\overline{5}$ & $2\overline{2}\overline{3}\overline{5}$ & $\overline{2}\overline{3}\overline{5}$ \\ 
245 & $2345\overline{1}\overline{2}\overline{4}\overline{5}$ & $2\overline{2}\overline{4}\overline{5}$ & $\overline{4}\overline{5}$ \\ 
345 & $345\overline{1}\overline{2}\overline{3}\overline{4}\overline{5}$ & $\overline{2}\overline{3}\overline{4}\overline{5}$ & $\overline{3}\overline{4}\overline{5}$ \\ 
\hline
\end{tabular}
\end{center}
\caption{The bases $B$ of $M$, the corresponding facets $F(B)$ and ${F(B)}^\bullet$ of 
${\textrm{Act}}_<(M)$ and ${\textrm{Act}}^\bullet_<(M)$, and their (shared) restriction set $\RR(F(B))$ in the shelling.\label{table2}}
\end{table}

For each basis $B$, 
Table \ref{table2} lists the corresponding facet $F(B)$ of $\textrm{Act}_<(M)$, 
 the corresponding facet ${F(B)}^\bullet$ of ${\textrm{Act}}^\bullet_<(M)$, and the restriction set of the facet $F(B)$ in the shelling. 
This restriction set is $\RR(F(B)) = \overline{IP(B)}$. For example, when we add facet $12\overline{3}\overline{4}$ to the complex ${\textrm{Act}}^\bullet_<(M)$ in the third step of the shelling, the new faces that appear are the eight sets in the interval $[\RR(12\overline{3}\overline{4}),12\overline{3}\overline{4}] = [\overline{3}, 12\overline{3}\overline{4}]$.

\medskip

Notice that we can embed $IN(M)\longrightarrow {\textrm{Act}}^\bullet_<(M)$ by sending $1\to 1, 2\to \overline{2}, 3\to \overline{3}, 4\to \overline{4}, 5\to \overline{5}$. The latter complex has the same $h$-vector and is contractible. Therefore, it is no coincidence that the shellings of $IN(M)$ and ${Act}_<(M)$ are related. In fact, we will prove that any shelling order for ${Act}_<(M)$ is a shelling order for $IN(M)$. Theorem \ref{main} then gives:
\begin{equation}\label{shell1}
\textrm{any linear extension of $<_{ext/int}$ is a shelling order for $IN(M)$ and ${Act}_<(M)$.}
\end{equation}

\medskip

%Finally we present the examples
We conclude this section with two examples showing that the linear extensions of the internal and external orders $<_{int}$ and of $<_{ext}$ are not necessarily shelling orders for $Act_<(M)$.

\begin{ex}\label{extfalla}
Consider any linear extension of $<_{ext}$ starting with $124$ and $135$ in that order, such as:
\[
124, 135, 125, 134, 234, 235, 245, 345.
\]
This is not a shelling order for $IN(M)$ because the second facet $135$ intersects the first facet $124$ in codimension 2. By Corollary \ref{ActtoIN} (or directly by inspection), this is not a shelling order for $Act_<(M)$ either. Therefore: 
\begin{equation}\label{shell2}
\textrm{a linear extension of $<_{ext}$ need not be a shelling order for $IN(M)$ or for ${Act}_<(M).$}
\end{equation}
%The poset of the external order for $M$ is shown in Figure \ref{extorder}. Note that the bases $135$ and $124$ are minimal elements, so $F_1=124, F_2=135$ can be extended to a linear order of the poset. For any such extension $F_2\cap F_1$ doesn't have codimension 1, hence they are not shelling orders of $IN(M)$, and by Corollary \ref{ActtoIN}, they are not shelling orders of $Act_<(M)$ either.
\end{ex}

\begin{ex}\label{intfalla}
%The poset of the internal order for $M$ is shown in the left of Figure \ref{intorder}.  
Consider the following linear extension of $<_{int}$:
\[
124, 125, 134, 135, 245, 345, 234, 235
\]
which gives the following order on the facets:
\[
12\overline{2}\overline{4},  \,
12\overline{2}\overline{5}, \,
12\overline{3}\overline{4},  \,
12\overline{3}\overline{5}, \,
2\overline{2}\overline{4}\overline{5},\,
\overline{2}\overline{3}\overline{4}\overline{5}, \,
2\overline{2}\overline{3}\overline{4},  \,
2\overline{2}\overline{3}\overline{5},  \,
\]
%begin{center}
%\begin{tabular}{c}
%Order \\
%$\left\{1,2,3,4,5,\overline{1},\overline{2},\overline{4}\right\}$ \\ 
%$\left\{1,2,3,4,5,\overline{1},\overline{2},\overline{5}\right\}$ \\ 
%$\left\{1,2,3,4,5,\overline{1},\overline{3},\overline{4}\right\}$ \\ 
%$\left\{1,2,3,4,5,\overline{1},\overline{3},\overline{5}\right\}$ \\ 
%$\left\{2,3,4,5,\overline{1},\overline{2},\overline{4},\overline{5}\right\}$  \\ 
%$\left\{3,4,5,\overline{1},\overline{2},\overline{3},\overline{4},\overline{5}\right\}$ \\ 
%$\left\{2,3,4,5,\overline{1},\overline{2},\overline{3},\overline{4}\right\}$  \\ 
%$\left\{2,3,4,5,\overline{1},\overline{2},\overline{3},\overline{5}\right\}$  \\ 
%
%\end{tabular}
%\end{center}
This is a shelling of $IN(M)$ by Theorem \ref{intshell}. However, it is not a shelling of $\textrm{Act}_<(M)$ and ${\textrm{Act}}^\bullet_<(M)$. To see this, suppose we introduce the facets of ${\textrm{Act}}^\bullet_<(M)$ in the order above. When we introduce the sixth facet $\overline{2}\overline{3}\overline{4}\overline{5}$
%of the order, $F_6=\left\{3,4,5,\overline{1},\overline{2},\overline{3},\overline{4},\overline{5}\right\}$,  has
we introduce two new minimal faces: 
%$\left\{\overline{2},\overline{3}\right\}$ and $\left\{\overline{3},\overline{4},\overline{5}\right\}$. 
$\overline{2}\overline{3}$ and $\overline{3}\overline{4}\overline{5}$; so this is 
not a shelling order for $Act_<(M)$. %Note, however, that this is a shelling order for $IN(M)$ as demonstrated in Theorem \ref{intshell}. 
Hence
\begin{equation}\label{shell3}
\textrm{a linear extension of $<_{int}$ is a shelling order for $IN(M)$, but not necessarily for ${Act}_<(M).$}
\end{equation}

\end{ex}

In summary, combining (\ref{shell1}), (\ref{shell2}), and (\ref{shell3}), we see that the hypotheses of Theorems \ref{main} and \ref{intshell} are as strong as possible in the context of LasVergnas's active orders.

%---------------------------------------------------------------------------------Shellabiliity---------------------------------------------------------------
\section{\textsf{Shellability of the external activity complex}}\label{sec:shell}

In this section we prove our main result, which states that the external activity complex is shellable. We begin by proving two technical lemmas. % structural results needed for the proof of shellability of the external activity complex. 

%
%\begin{lem}\label{importante} If $B<_{lex} C$, then there exist $c\in EP(B)\cap C$ and $b<c$ such that $C-c\cup b$ is a basis. 
%\end{lem}
%\begin{proof}  Applying the symmetric exchange property of Lemma \ref{symmexch} to bases $B$ and $C$ and the element $b=\textrm{min}(B-C)$, we obtain an element $c\in C-B$ such that $B-b\cup c\in {\cal B}$ and $C-c\cup b \in {\cal B}$. Since $B <_{lex} C$ we have $b<c$; so $c$ is not the smallest element of $\Circ(B, c)$, and $c\in EP(B) \cap C$. 
%\end{proof}

\begin{lem}\label{masimportante}
Let $M$ be a matroid on an ordered ground set, and let $A, C$ be bases of $M$. There exist $c\in EP(A)\cap C$ and $a<c$ such that $C-c\cup a$ is a basis if and only if $A \ngeq_{ext/int} C$ in LasVergnas's external/internal order.

\end{lem}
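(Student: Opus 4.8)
The plan is to prove both directions of the equivalence by translating the combinatorial statement into the language of the sets $A \cup EA(A)$ and $B - IA(B)$, using the characterizations in Definitions \ref{ext}, \ref{int}, and \ref{extint}. Recall that $A \geq_{ext/int} C$ means exactly $IP(C) \cap EP(A) = \emptyset$ (Definition \ref{extint}.2 with the roles of the two bases swapped). So the statement to prove is: there exist $c \in EP(A) \cap C$ and $a < c$ with $C - c + a \in \mathcal{B}$ if and only if $EP(A) \cap IP(C) \neq \emptyset$.

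For the easy direction, suppose $A \ngeq_{ext/int} C$, so there is an element $c \in EP(A) \cap IP(C)$. I want to produce from $c$ the required basis exchange. Since $c \in IP(C)$, by definition the fundamental cocircuit $\Cocirc(C,c)$ has a minimum element $a$ strictly less than $c$; that element $a$ satisfies $C - c + a \in \mathcal{B}$ by the description $\Cocirc(C,c) = \{x : C \cup x - c \in \mathcal{B}\}$, and $a < c$ as required. Also $c \in EP(A) \subseteq E - A$, and $c \in C$, so $c \in EP(A) \cap C$. This gives the forward implication with essentially no work.

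For the converse, suppose we have $c \in EP(A) \cap C$ and $a < c$ with $C - c + a \in \mathcal{B}$; I must show $A \ngeq_{ext/int} C$, i.e. exhibit an element of $EP(A) \cap IP(C)$. The natural candidate is again $c$: I already have $c \in EP(A)$, so it suffices to show $c \in IP(C)$, i.e. that $\min \Cocirc(C,c) \neq c$. Since $a < c$ and $C - c + a \in \mathcal{B}$, we have $a \in \Cocirc(C,c)$, so $\min \Cocirc(C,c) \leq a < c$, hence $c \notin IA(C)$, hence $c \in IP(C)$. This seems to close the argument immediately — so in fact both directions reduce to the same bijective observation linking "$c \in IP(C)$" with "there is $a < c$ giving a cocircuit exchange at $c$", together with the definition of $<_{ext/int}$.

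The main subtlety I anticipate — and the reason the lemma is stated the way it is rather than trivially — is making sure the quantifier structure matches: the lemma asks for \emph{some} $c \in EP(A) \cap C$ admitting a downward exchange, and $A \ngeq_{ext/int} C$ asks for \emph{some} $c \in EP(A) \cap IP(C)$; the argument above shows these "some"s can be taken to be the \emph{same} $c$, which is the real content. I would double-check that $EP(A) \cap C = EP(A)$ is \emph{not} what is meant (it is not: $EP(A) \subseteq E - A$ but need not lie in $C$), so the intersection with $C$ is genuinely needed on the exchange side, matching the fact that $IP(C) \subseteq C$ on the order side. I should also confirm there are no edge cases with loops or coloops: a coloop of $M$ lies in every basis and is never in $EP(A)$ or $IP(C)$ in a way that breaks the argument, and loops lie in no basis, so neither interferes. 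Assuming those checks go through, the proof is short; the only place to be careful is bookkeeping the two definitions of $IP$ and $EP$ and the direction of the inequality $a < c$.
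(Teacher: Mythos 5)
Your proof is correct and takes essentially the same approach as the paper: both hinge on the observation that, for $c \in C$, the existence of $a<c$ with $C-c\cup a \in \mathcal{B}$ is equivalent to $c \in IP(C)$, and then translate $IP(C)\cap EP(A)\neq\emptyset$ into $A \ngeq_{ext/int} C$ via Definition~\ref{extint}.2. The paper's version is merely terser; you have just spelled out the details.
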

\begin{proof}
Given $c \in C$, we can find an element $a<c$ with $C-c\cup a \in \B$ if and only if $c \in IP(C)$. To find such an element $c$ with the additional condition that $c \in EP(A)$, we need $IP(C) \cap EP(A)\neq \emptyset$; this is equivalent to $A \ngeq_{ext/int} C$ in LasVergnas's external/internal order by Theorem \ref{extint}.2.
\end{proof}

A total order $<$ on the set $\B$ of bases of $M$ induces an order on the set of facets $\{F(B)\, : \, B \in \B\}$ of the external activity complex $\textrm{Act}_<(M)$. We now characterize the shelling orders on $\textrm{Act}_<(M)$.

\begin{lem}\label{lema:shelling}
Let $\B$ be the set of bases of a matroid $M$. 
A total order $<$ on the set $\B$ induces a shelling of the external activity complex $\textrm{Act}_<(M)$ if and only if for any bases $A < C$ there exists a basis $B < C$ such that
\begin{enumerate}
\item[(a)] $B = X \cup b$ and $C = X \cup c$ for some $b \neq c$.
\item[(b)] $c \notin A$ and $c \in EA(B)$ if and only if $c \in EA(A)$.
\item[(c)] For any $d \notin B \cup C = X \cup b \cup c$ we have $d \in EA(B)$ if and only if $d \in EA(C)$
\end{enumerate}
\end{lem}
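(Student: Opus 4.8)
Proof proposal for Lemma~\ref{lema:shelling}.

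The plan is to unwind the definition of a shelling order into the three conditions (a)--(c), using the explicit formulas $F(B)=B\cup EP(B)\cup\overline{B\cup EA(B)}$ and $G(B)=EA(B)\cup\overline{EP(B)}$ for the facets of $\textrm{Act}_<(M)$ and their complements in $[[E]]$. First I would note that $\textrm{Act}_<(M)$ is pure: since $EP(B)\uplus EA(B)=E-B$, we get $|F(B)|=2r(M)+|EP(B)|+|EA(B)|=|E|+r(M)$ for every basis $B$. Consequently, for two facets $F(B)$ and $F(C)$ the condition $F(B)\cap F(C)=F(C)-f$ for some $f\in F(C)$ holds if and only if exactly one element lies in $F(C)\setminus F(B)$ (and then, by equality of sizes, exactly one element lies in $F(B)\setminus F(C)$); call such a pair \emph{adjacent}, and call $f$ the unique element of $F(C)\setminus F(B)$. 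Since $B\mapsto F(B)$ is an order-preserving bijection onto the facets, and since for an adjacent pair the inclusion $F(A)\cap F(C)\subseteq F(B)\cap F(C)=F(C)-f$ is equivalent to $f\notin F(A)$, i.e. $f\in G(A)$, the definition of shelling order for $\textrm{Act}_<(M)$ reads: for all bases $A<C$ there is a basis $B<C$ (possibly $B=A$) such that $F(B)$ and $F(C)$ are adjacent and the element $f$ of $F(C)\setminus F(B)$ lies in $G(A)$.

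Next I would determine exactly when $F(B)$ and $F(C)$ are adjacent, by tabulating the contributions of $B\cap C$, $B\setminus C$, $C\setminus B$, and $E\setminus(B\cup C)$. Any $x\in B\cap C$ has $x,\overline x\in F(B)\cap F(C)$ and contributes nothing. If $x\in B\setminus C$ then $x,\overline x\in F(B)$ while exactly one of $x,\overline x$ lies in $F(C)$ (namely $x$ if $x\in EP(C)$ and $\overline x$ if $x\in EA(C)$), so $\{x,\overline x\}$ contributes exactly one element to $F(B)\setminus F(C)$; symmetrically each $x\in C\setminus B$ contributes exactly one element to $F(C)\setminus F(B)$. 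Finally, for $d\notin B\cup C$ the pair $\{d,\overline d\}$ contributes nothing if $d$ lies in the same one of $EA,EP$ with respect to both $B$ and $C$, and contributes one element to each of $F(B)\setminus F(C)$ and $F(C)\setminus F(B)$ otherwise. Hence $F(B)$ and $F(C)$ are adjacent if and only if $|B\setminus C|=|C\setminus B|=1$, say $B=X\cup b$ and $C=X\cup c$ with $b\neq c$ (this is (a)), and every $d\notin B\cup C$ lies in $EA(B)$ if and only if it lies in $EA(C)$ (this is (c)); moreover, under (a) the element of $F(C)\setminus F(B)$ is $f=c$ if $c\in EA(B)$ and $f=\overline c$ if $c\in EP(B)$ (recall $c\notin B$).

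It then remains to check that, assuming (a), the condition $f\in G(A)=EA(A)\cup\overline{EP(A)}$ is exactly (b). If $c\in EA(B)$ then $f=c$, and $c\in G(A)$ means $c\notin A$ and $c\in EA(A)$; if $c\in EP(B)$ then $f=\overline c$, and $\overline c\in G(A)$ means $c\notin A$ and $c\in EP(A)$, i.e. $c\notin A$ and $c\notin EA(A)$. In both cases this says precisely that $c\notin A$ and that $c\in EA(B)$ if and only if $c\in EA(A)$, which is condition (b). Assembling the three equivalences gives the lemma.

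The argument is essentially careful bookkeeping; no step is deep. The main (mild) obstacle is the middle paragraph: one must keep straight the four ``coordinates'' $b,\overline b,c,\overline c$ together with the elements of $E\setminus(B\cup C)$, and verify that the count forces $|B\setminus C|=1$, so that conditions (a) and (c) together capture adjacency of facets on the nose. A secondary point requiring attention is matching the case split $c\in EA(B)$ versus $c\in EP(B)$ correctly against the definition of $G(A)$, so that $f\in G(A)$ collapses cleanly to the single biconditional in (b).
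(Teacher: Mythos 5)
Your proof is correct and takes essentially the same route as the paper's: both pass to complements $G(\cdot)$ and reduce the shelling condition to a counting argument showing that adjacency of $F(B)$ and $F(C)$ forces $|B\setminus C|=1$ (condition (a)) together with no external disagreements outside $B\cup C$ (condition (c)), and then read off (b) from $f\in G(A)$. The paper's version compresses the count into the identity $\mathrm{supp}(G(D))=E-D$ and leaves the converse as ``similar,'' whereas your explicit tabulation of the contributions of $B\cap C$, $B\setminus C$, $C\setminus B$, and $E\setminus(B\cup C)$ gives a genuine characterization of adjacency, which is a slightly cleaner way to get the biconditional in one pass.
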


\begin{proof}
By definition, $<$ induces a shelling order if for every $A<C$ there exist $B < C$ and $c^{\pm} \in F(C)$ (where $c^{\pm}$ equals $c$ or $\overline{c}$ for some $c \in E$) such that
\[
F(A) \cap F(C) \subset F(B) \cap F(C) = F(C)-c^\pm.
\]
Recalling that $G(D) = EA(D) \cup \overline{EP(D)}$ is the complement of $F(D)$ in $[[E]]$ for each basis $D$, this is equivalent to 
\[
G(A) \cup G(C) \supset G(B) \cup G(C) = G(C) \cup c^\pm.
\]
Define the support of $S \subset [[E]]$ to be $\supp(S) = \{i \in E \, : \, i \in S \, \textrm{ or } \, \overline{i} \in S\}$. Notice that we have $\supp(G(D)) = E-D$ for any basis $D$. Then
\[
|E|- |B \cap C| = |\supp(G(B) \cup G(C))| = |\supp(G(C) \cup c^\pm)| = |E|-r+1.
 \]
where $r$ is the rank of the matroid. This implies (a).

If (c) was not satisfied for some $d \notin B \cup C$, we would find both $d$ and $\overline{d}$ in $G(B) \cup G(C) = G(C) \cup c$, a contradiction. Finally, 
$c^\pm$ is in $G(A)$ and $G(B)$, which implies (b). 

The converse follows by a very similar argument.
\end{proof}

\begin{cor}\label{ActtoIN}
If a total order $<$ on $\B$ induces a shelling of the external activity complex $\textrm{Act}_<(M)$, then it also induces a shelling of the independence complex $IN(M)$.
\end{cor}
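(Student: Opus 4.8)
The plan is to use the combinatorial characterization of shelling orders of $\textrm{Act}_<(M)$ given in Lemma \ref{lema:shelling}, together with the analogous (easier) characterization one can write down for $IN(M)$, and simply observe that the former is stronger than the latter. Recall from Theorem \ref{matroidshell} and the surrounding discussion that the facets of $IN(M)$ are the bases $B \in \B$, and that a total order on $\B$ is a shelling order of $IN(M)$ exactly when for every pair $A < C$ there is a basis $B < C$ with $F_{IN}(A) \cap F_{IN}(C) \subseteq F_{IN}(B) \cap F_{IN}(C) = C - c$ for some $c \in C$; equivalently, $A \cap C \subseteq B \cap C = C - c$, i.e. there is an element $c \in C \setminus A$ and a basis $B < C$ with $B = (C - c) \cup b$ for some $b$. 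So all I need to extract from an $\textrm{Act}_<(M)$-shelling is, for each $A < C$, a witness basis $B < C$ and an element $c \in C \setminus A$ exchanged out of $C$.

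First I would take a total order $<$ on $\B$ that shells $\textrm{Act}_<(M)$ and fix bases $A < C$. Applying Lemma \ref{lema:shelling} produces a basis $B < C$ and an element $c$ satisfying (a), (b), (c). From (a) I get $B = X \cup b$, $C = X \cup c$, so $B = (C - c) \cup b$, which is precisely the codimension-one exchange I need for the $IN(M)$ shelling. From (b) I get $c \notin A$, so $c \in C \setminus A$. Therefore $A \cap C \subseteq C - c$ and $B \cap C = X = C - c$, which is exactly the condition that $F_{IN}(A) \cap F_{IN}(C) \subseteq F_{IN}(B) \cap F_{IN}(C)$ and that this common intersection is a codimension-one face of $C$. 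Since $B < C$, this $B$ serves as the required earlier facet, and hence $<$ shells $IN(M)$.

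There is essentially no obstacle here: the content is entirely contained in the statement of Lemma \ref{lema:shelling}, and the only thing to check is that the data it hands back (conditions (a) and the first half of (b)) already encode a valid step of an $IN(M)$-shelling; conditions (c) and the biconditional in (b) concerning external activity are simply not needed. The one point worth writing carefully is the translation between the "$G(D)$-complement / support" bookkeeping used in the proof of Lemma \ref{lema:shelling} and the plain inclusion statement $A \cap C \subseteq B \cap C = C - c$ for $IN(M)$; but since $F_{IN}(D) = D$ and intersections of bases are just set intersections, this is immediate. If one prefers to avoid even invoking Lemma \ref{lema:shelling} as a black box, the same argument can be phrased directly: project the shelling condition $F(A) \cap F(C) \subseteq F(B) \cap F(C) = F(C) - c^{\pm}$ onto the $E$-coordinates (i.e. apply $\supp$ after restricting to $F(D) \cap E = D$), noting that $F(D) \cap E = D$, and deduce the $IN(M)$-shelling condition; I would include whichever version reads more cleanly in context.
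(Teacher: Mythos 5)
Your proof is correct and takes essentially the same route as the paper: invoke Lemma \ref{lema:shelling} to obtain $B<C$ and $c$, then verify the $IN(M)$-shelling condition $A\cap C\subseteq B\cap C = C-c$. The paper reaches the inclusion $A\cap C\subseteq B\cap C$ by passing through the support identity $\supp(G(D))=E-D$ applied to $G(A)\cup G(C)\supseteq G(B)\cup G(C)$, whereas you derive it directly from $c\notin A$ in condition (b); both are immediate and give the same conclusion.
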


\begin{proof}
Let $A<C$ and assume that $B<C$ satisfy conditions (a), (b), and (c) of Lemma \ref{lema:shelling}. 
Since $\supp(G(D)) = E-D$ for every basis $D$, 
%Now note that $\supp(G(H)) = EA(H) \cup EP(H) = E-H$ for every basis, thus we have that $\supp(G(A)\cup G(C))= E-(A\cap C)$ and $\supp(G(A) \cup G(B)) = E-(A\cap B)$. T
the containment $G(A)\cup G(C) \supset G(B)\cup G(C)$ gives $E-(A\cap C) \supset E-(B\cap C)$, which implies $A\cap C \subset B\cap C = X = C-c$. Hence the total order $<$ induces a shelling order of $IN(M)$. 
\end{proof}

%
%We will show that the lex order on $\cal B$, gives a shelling order of the facets $F(B)$ in $\textrm{Act}_<(M)$. It is enough to prove the following statement:\\
%\\
%Given $B<C$ there exists $A$ with the following properties:
%\begin{itemize}
%\item[{\bf(a)}] $A<C$
%\item[{\bf(b)}] $G(A)\cup G(C) \subset G(B)\cup G(C)$
%\item[{\bf(c)}] $|G(A)\cup G(C)| = n-r+1$
%\end{itemize}
%
%We always have $|G(B)| = n-r$ so property $\bf (c)$ is equivalent to
% \begin{itemize}
%\item[\bf (c$^*$)]There is a unique element $e \in \left( G(A)\cup G(C)\right)-G(C)$.
%\end{itemize}
%Furthermore the set $G(B)$ is supported on $E-B\cup \overline{E-B}$, so $(c^*)$ is in turn is equivalent to: 
%\begin{itemize}
%\item[\bf (c$^{**}$)] $A$ is equal to $C-j\cup i$ for some $i,j \in E$, and every element of $E-(C\cup A)$ belongs to the same external set (passive or active) with respect to both bases.
%\end{itemize}

%------------------------------------------------------PROOF------------------------------------------------------
%\section{Proof that lex ordering is a shelling order:}

Now we are ready to prove our main theorem.

\begin{reptheorem}{main}
Let $M = (E, \mathcal{B})$ be a matroid, and let $<$ be a linear order on the ground set $E$. Any linear extension of LasVergnas's external/internal order $<_{ext/int}$ of $\B$ induces a shelling of the external activity complex $\textrm{Act}_<(M)$.
%
%Let $M$ be a matroid on an ordered ground set.
%%The lexicographic order $<_{lex}$ on the bases 
%Let $<$ be any total order on the bases of $M$ which extends LasVergnas's external/internal order $<_{Ext/Int}$. Then $<$ induces a shelling of the external activity complex.
\end{reptheorem}

\begin{proof}
We use the characterization of Lemma \ref{lema:shelling}. 
Consider bases $A<C$; we will find the desired basis in two steps. We construct a basis $B$ and, if necessary, a second basis $B'$, and we will  show that one of them satisfies the conditions (a),(b),(c) of Lemma \ref{lema:shelling}.

\medskip

\noindent \textbf{Step 1.}
Since $A \ngeq_{ext/int} C$, we first use Lemma \ref{masimportante} to find $c\in EP(A)\cap C$ and a minimal element $b<c$ such that 
\[
B = X \cup b
\]
is a basis, where $X=C-c$. The minimality of $b$ implies that $b$ is minimum in $\Cocirc(B,b)$, so $b \in IA(B)$. Therefore $B \backslash IA(B) \subseteq X \subseteq C$. Theorem \ref{int} then implies that $B <_{int} C$, which in turn gives $B <_{ext/int} C$, and hence $B<C$.
%By Lemma \ref{importante} we have 
%$ with $c\in EP(B)\cap C$ and $a<c$, and $a$ minimal with respect to this property, i.e. if $C-j\cup i'$ is a basis, then $i\leq i'$.

Property (a) is clearly satisfied. By construction $c \notin A$ and $c \in EP(A)$. Since $b<c$ is in $\Circ(B, c)$, we have $c \in EP(B)$. Therefore (b) is also satisfied. Property (c) does not always hold; let us analyze how it can fail, and adjust $B$ accordingly if necessary.

Suppose (c) fails for an element $d \notin B \cup C$; call such an element a \emph{$\{B,C\}$ external disagreement}. This means that $d$ is minimum in one of the fundamental circuits $\beta = \Circ(B,d)$ and $\gamma = \Circ(C,d)$ but not in the other one. 

\begin{figure}[h!]\label{gusanos}
  
  \centering  
    \includegraphics[scale=0.4]{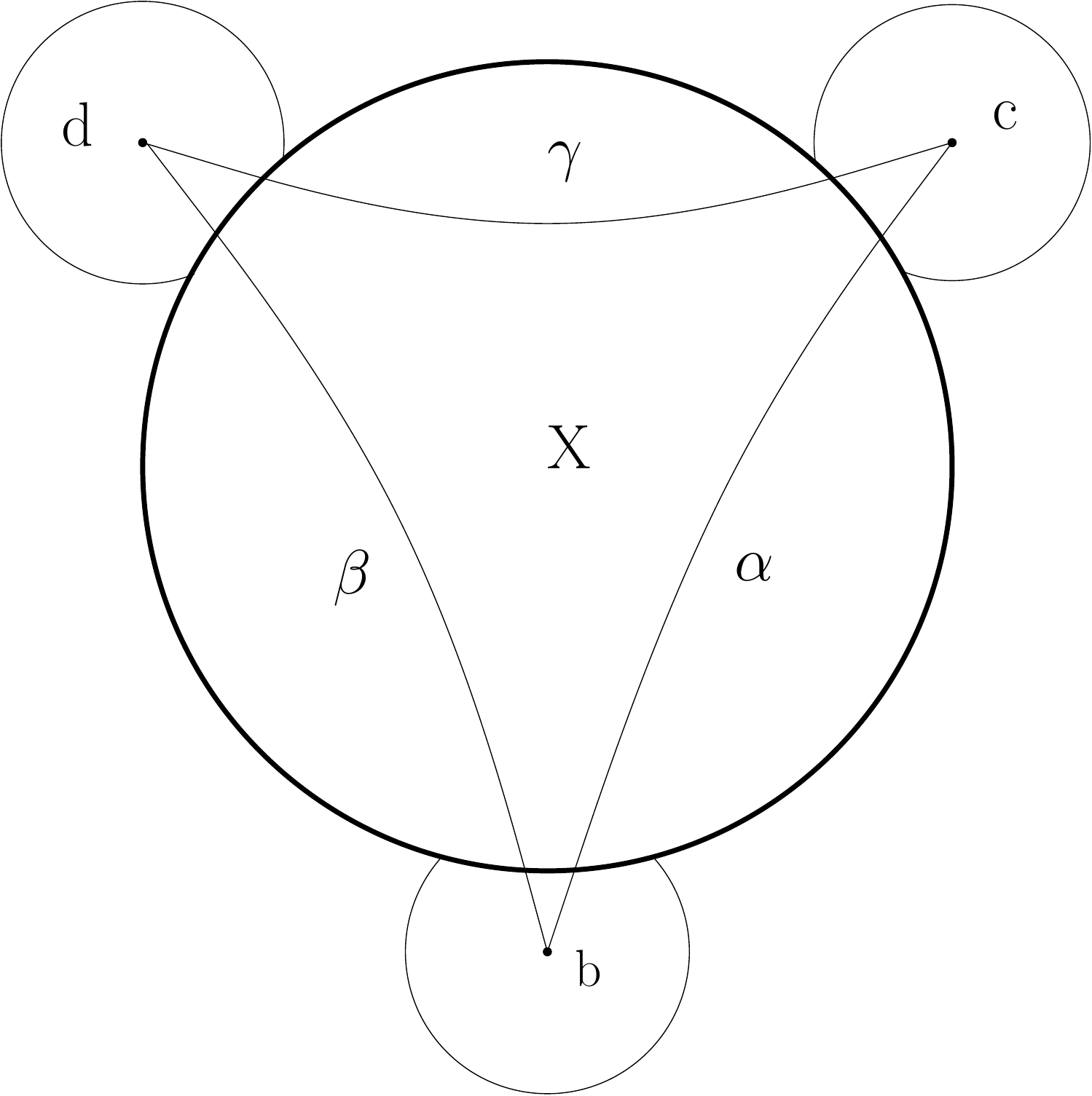}
    \caption{The bases $B = X \cup b$, $C = X \cup c,$ and $D = X \cup d$ and the fundamental circuits $\beta, \gamma, \alpha$.}
   
\end{figure}

Since they have different minima, we have $\beta \neq \gamma$; so using circuit elimination, we can find a circuit $\alpha \subseteq \beta \cup \gamma - d$. This circuit must contain $b$ and $c$, or else it would be contained in basis $B$ or $C$. This implies that
\[
b, c \in \alpha, \qquad b, d \in \beta, \qquad c,d \in \gamma.
\]
It follows that $D = X \cup d = (B \cup d) -b$ is a basis.
By the uniqueness of fundamental circuits, we must have
\[
\alpha = \Circ(B,c) = \Circ(C,b), \quad
\beta = \Circ(B,d) = \Circ(D,b), \quad
\gamma = \Circ(C,d) = \Circ(D,c).
\]

%If property $\bf (c)$, equivalently $\bf (c^{*})$, is satisfied, then the only element in $G(B)\cup G(C)$ which is not in $G(C)$ is $\overline{c}$ (note that $c$ is necessarily passive on $B$ because $b<c$ is on its fundamental circuit) and by construction $\overline{j}\in G(A)$, so $G(B)\cup G(C) \subset G(A)\cup G(C)$, which means property $\bf (b)$ holds too and we are done.\\
%
%If $B$ does not satisfy property $\bf (c)$, equivalently $\bf (c^{**})$, it is because there is some element $d\in E-(B\cup C)$ in different external sets with respect to bases $B$ and $C$. Call such $d$ a \emph{$B$-problematic} element. 

Taking into account that $b<c$, we consider three cases:
\begin{itemize}
\item {\bf 1. $b<c<d$:} Since $b \in \Circ(B,d)= \beta$ and $c \in \Circ(C,d) = \gamma$, $d$ is minimum in neither $\beta$ nor $\gamma$, 
%we have $d \in EP(B)$ and $d \in EP(C)$, 
a contradiction. 
%By lemma \ref{lema1} we have that the fundamental circuits of $d$ with respect to $C$ and $B$ either don't contain $c, b$ or they contain $c$ and $b$ respectively. In the first case both fundamental circuits are the same, so $d$ is passive in both or active in both. In the second case, since $b<c<d$, the element $d$ is externally passive in both bases.
\item {\bf 2. $d<b<c$:} The minimality of $b$ implies that $X \cup d = D$ is not a basis, a contradiction.
%If $c\in \Circ(C,d)$ then $C-c\cup d$ is a basis. Since $d<c$ this cannot happen by the minimality of $b$. By lemma\ref{lema1}, both fundamental circuits must be the same, and $d$ is in the same external set for both bases.
\item {\bf 3. $b<d<c$:} Since $d$ is not minimum in $\Circ(B,d) = \beta \ni b$, we have $d \in EP(B)$; so $d$ is a $\{B,C\}$ external disagreement if and only if 
%if $d$ is $B$-problematic, it must be minimum in $\gamma = \Circ(C,d)$; that is, 
$d \in EA(C)$. 
%
%Again if both fundamental circuits are the same, $d$ is in the same external sets. Otherwise $b\in \Circ(A,d)$, implying that $d$ is externally passive in $B$ and $c \in \Circ(C,d)$.
\end{itemize}

We conclude that, under the above hypotheses, 
\begin{equation}\label{disagree}
\textrm{$d$ is a $\{B,C\}$ external disagreement $\Longleftrightarrow$ $X \cup d$ is a basis, $b<d<c$, and $d \in EA(C)$.} 
\end{equation}
If there are no $\{B,C\}$ external disagreements, $B$ is our desired basis. Otherwise, proceed as follows.

\medskip

\noindent \textbf{Step 2.}
Define the basis
\[
B' = X\cup b'
\]
where $b'$ is the largest $\{B,C\}$ external disagreement. We have $b<b'<c$ and $b' \in EA(C)$. It follows that $B' \subset C \cup EA(C)$, so $B'<_{ext} C$ by Theorem \ref{ext}. This implies that $B'<_{ext/int} C$, which in turn gives $B' < C$. Now we claim that $B'$ satisfies conditions (a),(b),(c) of Lemma \ref{lema:shelling}.

Property (a) is clearly satisfied. By construction $c \notin A$ and $c \in EP(A)$. Since $b'<c$ is in $\Circ(B', c)$, we have $c \in EP(B')$, so (b) holds. To show (c), assume contrariwise that $d' \notin X \cup b' \cup c$ is a $\{B',C\}$ external disagreement; that is, it is minimum in one of the fundamental circuits $\beta'=\Circ(B',d')$ and $\gamma' = \Circ(C, d')$ but not in the other.

 \begin{figure}[h!]\label{gusanos2}  
  \centering  
    \includegraphics[scale=0.4]{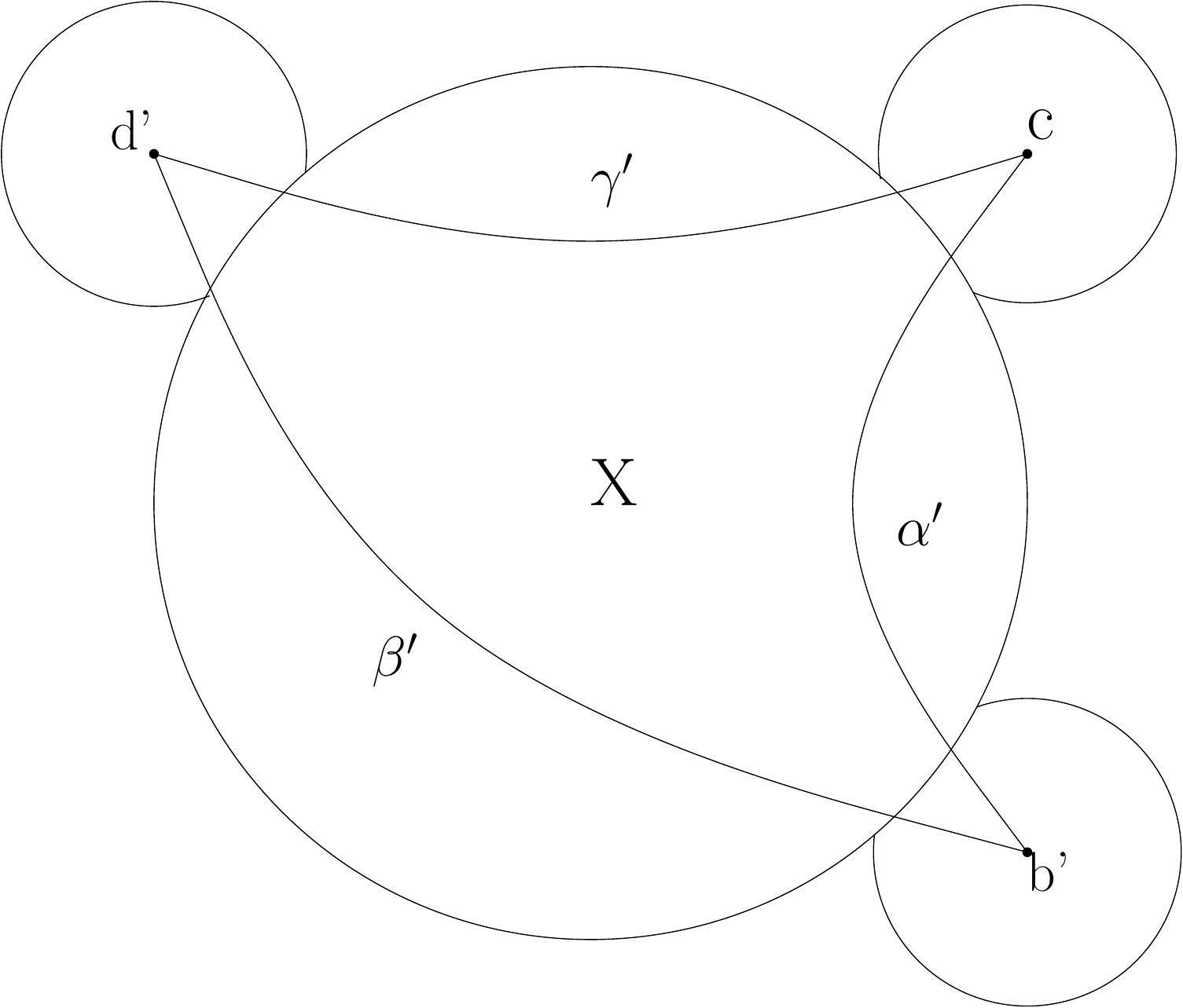}
    \caption{The bases $B' = X \cup b'$, $C = X \cup c,$ and $D' = X \cup d'$ and the fundamental circuits $\beta', \gamma', \alpha'$.}
      \end{figure}

As in Step 1, $D'=X \cup d'$ must be a basis, and we have circuits
\[
\alpha' = \Circ(B',c) = \Circ(C,b'), \quad
\beta' = \Circ(B',d') = \Circ(D',b'), \quad
\gamma' = \Circ(C,d') = \Circ(D',c).
\]
with
\[
b', c \in \alpha', \qquad b', d' \in \beta', \qquad c,d' \in \gamma'.
\]
%Also, by the uniqueness of circuit elimination, we have
%\[
%\alpha' \subseteq \beta' \cup \gamma' - d', \qquad
%\beta' \subseteq \alpha' \cup \gamma' - c, \qquad
%\gamma' \subseteq \alpha' \cup \beta' -  b', 
%\]

Once again, in view of $b'<c$, we consider three cases:
\begin{itemize}
\item {\bf Case 1 $b'<c<d'$:} Since $b' \in \Circ(B,d')= \beta'$ and $c \in \Circ(C,d') = \gamma$, $d'$ is minimum in neither $\beta$ nor $\gamma$, 
%we have $d \in EP(B)$ and $d \in EP(C)$, 
a contradiction. 

\item {\bf Case 2 $d'<b'<c$:} If $d' \in EA(B')$ then $d' = \min \beta'$. Since $b' \in EA(C)$, we have $b' = \min \alpha'$. Because they have different minima, we have $\beta' \neq \alpha'$, so we can use circuit elimination to find a circuit $\gamma'' \subseteq (\alpha' \cup \beta') - b'$. Again, that circuit must contain $c$ and $d'$ or else it would be contained in $C$ or $D'$. Therefore, by the uniqueness of fundamental circuits, $\gamma''=\gamma'$. 
Now, since $\gamma' \subseteq (\alpha' \cup \beta') - b'$ and $d' \in \gamma'$, we have $d' = \min \gamma'$ and $d' \in EA(C)$.

Similarly, if $d' \in EA(C)$ then $d' = \min \gamma'$. Since $b' \in EA(C)$, we have $b' = \min \alpha'$. As above, we can conlcude that $\beta' \subseteq (\alpha' \cup \gamma') - c$ and $d' \in \beta'$, we have $d' = \min \beta'$ and $d' \in EA(B')$.

In either case, we get a contradiction.

%By lemma \ref{lema2}, the element $d'$ has the same activity in $B'$ and $C$.

\item {\bf Case 3 $b'<d'<c$} 
Since $d'$ is not minimum in $\beta' = \Circ(B,d') \ni b'$, if $d'$ is a $\{B', C\}$ external disagreement, it must be minimum in $\gamma = \Circ(C,d')$; that is, $d' \in EA(C)$. We have $b<b'<d'<c$, and $X \cup d'$ is a basis. Therefore, recalling (\ref{disagree}),  $d'$ is also a $\{B, C\}$ external disagreement, contradicting the maximality of $b'$.
%
%
%
%
%Since $b'\in \Circ(B',l)$ so $l$ is externally passive in $B'$. By lemma \ref{lema1} we also have $c\in \Circ(C,l)$. If $l$ is externally active in $C$, then because we also have $b<l<c$ and $c\in \Circ(C,l)$, $l$ would be \emph{$B$-problematic} violating the maximality of $b'$. So $l$ is externally passive in $C$ too, and $l$ is not \emph{$B'$-problematic}.
\end{itemize}

%To check property $\bf (c)$, i.e. that every $l\in E-(B'\cup C)$ has the same external activity in both bases, we consider three cases:
%\begin{itemize}
%\item {\bf Case 1 $b'<c<l$:} Analogous to previous Case 1.
%\item {\bf Case 2 $l<b'<c$:} By lemma \ref{lema2}, the element $l$ has the same activity in both bases.
%\item {\bf Case 3 $b'<l<c$} Again if the fundamental circuits don't contain $b',c$ there is no problem. Otherwise $b'\in \Circ(B',l)$ so $l$ is externally passive in $B'$. By lemma \ref{lema1} we also have $c\in \Circ(C,l)$. If $l$ is externally active in $C$, then because we also have $b<l<c$ and $c\in \Circ(C,l)$, $l$ would be \emph{$B$-problematic} violating the maximality of $b'$. So $l$ is externally passive in $C$ too, and $l$ is not \emph{$B'$-problematic}.
%\end{itemize}

In conclusion, there are no $\{B', C\}$ external disagreements, and property (c) holds. Therefore the basis $B'$ has all required properties.
\end{proof}

%
%\noindent\comment{ We now show that lex can be replaced by any linear extension of the int/ext order (denoted by $\extint$) of Las Vergnas. Let $<$ denote any linear extension of $\extint$. So let $A<C$ and we will construct $B$ in two steps. 
%
%With help of lemma \ref{masimportante} find $c\in EP(A) \cap C$ and minimal element $b<c$ such that $X\cup b$ is basis. Note that $B$ is the smallest lexicographic basis containing $X$, so by Las Vergnas prop 5.2(iv), we get that $B<_{\text{int}} C$, thus $B\extint C$ and, in particular, $B<C$. Again, properties (a) and (b) are satisfied and the one that can fail is (c). 
%
%Suppose that $c$ fails for some $d\notin B\cup D$; call it $B$-problematic. Exactly as before, we conclude that $d$ is $B$-problematic if and only if $X\cup d$,  $b<d<c$ and $d\in EA(C)$. So let $B' = X\cup b'$, where $b'$ is the largest $B$-problematic element. 
%
%As in the previous part, we claim that conditions $(b)$ and $(c)$ are verified. The 'tricky' part is to show that $B'< C$. However, note that $b'<c$ and $b'\in EA(C)$. It follows that $F(C)\cap F(B') = F(C)-\overline c = F(B')-b'$. Intersecting this equation with $\overline{[n]}$ we get that $C\cup EA(C)-c = B'\cup EA(B')$ and in particular $B'\subseteq C\cup EA(C)$. As a result $B'<_{\text{ext}}C$, from what we get $B'\extint C$ and $B'<C$ as desired. 
%}
%
%

%\comment{Pensaba que las extensiones de $<_{int}$ tambiŽn dan shelling, pero ahora creo que no. En todo caso, deber'amos comentar que esto tambiŽn da cosas sobre los shellings de la matroide. Eso yo creo que es importante!}

\begin{cor}{}
Any linear extension of the external/internal order $<_{ext/int}$ gives a shelling order for the independence complex $IN(M)$. 
\end{cor}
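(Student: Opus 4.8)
The plan is to deduce this corollary immediately from the combination of Theorem~\ref{main} and Corollary~\ref{ActtoIN}. Theorem~\ref{main} establishes that any linear extension of $<_{ext/int}$ induces a shelling of $\textrm{Act}_<(M)$. Corollary~\ref{ActtoIN} establishes that any total order on $\B$ which shells $\textrm{Act}_<(M)$ also shells $IN(M)$. Composing these two statements gives exactly the claim: a linear extension of $<_{ext/int}$ shells $\textrm{Act}_<(M)$, hence it shells $IN(M)$. So the entire proof is the single sentence chaining those two results.

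There is essentially no obstacle here; the work has all been done upstream. The only thing worth a moment's care is that Corollary~\ref{ActtoIN} is stated for an arbitrary total order $<$ on $\B$, so one should confirm that the hypothesis it needs is precisely ``$<$ induces a shelling of $\textrm{Act}_<(M)$,'' which is exactly the conclusion Theorem~\ref{main} delivers for a linear extension of $<_{ext/int}$. These match verbatim, so no additional verification of compatibility is required.

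If one instead wanted a self-contained argument not routed through $\textrm{Act}_<(M)$, the alternative would be to invoke the characterization in Lemma~\ref{lema:shelling} together with the fact (noted after Theorem~\ref{matroidshell}) that the internal order records inclusion of restriction sets $IP(B)$ in the lexicographic shelling of $IN(M)$, and then observe that $<_{ext/int}$ refines $<_{int}$; but this is really just Theorem~\ref{intshell} combined with the fact that $<_{ext/int}$ extends $<_{int}$, and it is strictly weaker and less direct than the one-line deduction above. So I would present the corollary as the immediate consequence of Theorem~\ref{main} and Corollary~\ref{ActtoIN}.

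\begin{proof}
By Theorem~\ref{main}, any linear extension of $<_{ext/int}$ induces a shelling of the external activity complex $\textrm{Act}_<(M)$. By Corollary~\ref{ActtoIN}, any total order on $\B$ that induces a shelling of $\textrm{Act}_<(M)$ also induces a shelling of $IN(M)$. Combining the two statements yields the claim.
\end{proof}
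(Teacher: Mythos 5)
Your proof is correct and is exactly the paper's own argument: the corollary follows immediately by combining Theorem~\ref{main} with Corollary~\ref{ActtoIN}. Nothing further is needed.
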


\begin{proof}
This follows from Theorem \ref{main} and Corollary \ref{ActtoIN}. 
\end{proof}

In fact, we now prove a stronger result. We begin with a useful lemma.

%In general, the independence complex $IN(M)$ has shelling orders that are not shelling orders of $Act_<(M)$. The following theorem, together with the example from Section 6, provides an example of such an order.
%
%\begin{proof}
%Let $<$ be a linear extension of $<_{int}$ and consider bases $A < C$. Then $A\not >_{int} C$. We claim that there exists $B<_{int} C$ (and hence $B < C$) such that $A\cap C \subseteq B\cap C = C-c$ for some $c$ in $C$; this will prove the desired result.
%
%Since $A\not >_{int} C$, $C$ is not the lexicographically smallest basis that contains $A\cap C$. Let $D$ be that basis, so $D <_{lex} C$. 
%%Then $D<_{int} A$ and $D<_{int} C$, and $D\not=C$. Since lex is a linear extension of $<_{int}$, we have that $D$ is smaller in lex than $C$. 
%Let $d$ be the smallest element in $D-C$ and choose any $c \in \Circ(C,d)$, so $C-c \cup d$ is a basis. We have $c \geq \min(C-D) > \min(D-C) = d$. 
%
%Put $X =C-c$ and let $B = X \cup b$ be the lexicographically smallest basis that contains $X$. Since $D<_{lex} C$ is one such basis, $B \not= C$. Also, $B<_{int} C$ since $B$ is the lexicographically smallest basis that contains $X = B \cap C$. Finally, since $A \cap C \subseteq D$, we have $A \cap C \subseteq D \cap C = X$, so $A\cap C\subseteq B\cap C = X=C-c$. Therefore $B$ satisfies the desired properties.
%\end{proof}
%}
%We start with a useful lemma.

\begin{lem}\label{minimo}
Let $I$ be an independent set of $M$ and let $C$ be any basis that contains $I$. If $B$ is the lexicographically smallest basis that contains $I$ then $B\leq_{int} C$.
\end{lem}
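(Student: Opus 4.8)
The plan is to show $B \leq_{int} C$ using the characterization in Definition \ref{int}, specifically property 2: we need $B - IA(B) \subseteq C$. Since $I \subseteq C$ by hypothesis, it suffices to prove that $B - IA(B) \subseteq I$, i.e.\ that every internally passive element of $B$ already lies in $I$. So the key claim is: \emph{if $b \in B \setminus I$, then $b \in IA(B)$}, that is, $b = \min \Cocirc(B,b)$.

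First I would recall that $\Cocirc(B,b) = \{ x \in E : B \cup x - b \in \B\}$, so an element $x \in \Cocirc(B,b)$ gives an ``exchange'' replacing $b$ by $x$. Suppose for contradiction that $b \in B \setminus I$ but $b \notin IA(B)$, so there is some $x \in \Cocirc(B,b)$ with $x < b$. Then $B' := B \cup x - b$ is a basis. The crucial observation is that since $b \notin I$, we still have $I \subseteq B'$: indeed $I \subseteq B$ and removing $b \notin I$ keeps $I$ inside, and adding $x$ only enlarges the set. Now compare $B$ and $B'$ lexicographically. They differ only in that $B$ contains $b$ where $B'$ contains $x$, and $x < b$; since every other element is shared, $B'$ is lexicographically smaller than $B$. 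But $B'$ is a basis containing $I$, contradicting the minimality of $B$ as the lexicographically smallest basis containing $I$. This contradiction shows $b \in IA(B)$, proving the claim.

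Putting it together: from the claim, $B \setminus I \subseteq IA(B)$, hence $B - IA(B) \subseteq I \subseteq C$, and by Definition \ref{int}.2 we conclude $B \leq_{int} C$.

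\textbf{Expected obstacle.} The argument is essentially a one-step lexicographic exchange, so there is no serious obstacle; the only point requiring care is the comparison of $B$ and $B'$ in lexicographic order. One must be sure that the symmetric difference of $B$ and $B'$ is exactly $\{b, x\}$ (which is immediate since $B' = B \cup x - b$ and $x \notin B$, $b \in B$), so that the lex comparison is decided entirely by the smaller of $b$ and $x$. A subtle but harmless point is that $x$ could a priori lie in $I$; this causes no problem, since we only used $b \notin I$ to guarantee $I \subseteq B'$, and the lex comparison is unaffected. One might alternatively phrase the whole proof via Theorem \ref{matroidshell} and the remark that $<_{int}$ is the containment order on the restriction sets $IP(B)$ of the lexicographic shelling, but the direct exchange argument above is cleaner and self-contained.
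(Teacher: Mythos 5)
Your proof is correct, but it takes a genuinely different route from the paper's. The paper invokes Definition \ref{int}.4: $B \leq_{int} C$ holds iff $B$ is the lexicographically smallest basis containing $B \cap C$. Since $I \subseteq B$ and $I \subseteq C$, we have $I \subseteq B\cap C$, so any basis $A \supseteq B\cap C$ also contains $I$, and the lex-minimality of $B$ among bases containing $I$ gives $B \leq_{lex} A$ immediately — a one-step contradiction with no exchange argument needed. You instead target Definition \ref{int}.2 ($B - IA(B) \subseteq C$) and prove the stronger intermediate fact $IP(B) \subseteq I$ via a direct cocircuit-exchange: if $b \in B\setminus I$ were internally passive, swapping $b$ for a smaller element of $\Cocirc(B,b)$ would produce a lex-smaller basis still containing $I$. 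Both are sound; the paper's is shorter because it leans harder on LasVergnas's equivalence of characterizations, while yours is more self-contained at the level of matroid exchange and as a by-product records the useful observation that the lex-smallest basis containing $I$ has all its internally passive elements inside $I$ (which is essentially the restriction-set statement $\RR(B)=IP(B)$ from Theorem \ref{matroidshell} specialized to this situation).
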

\begin{proof} 
By Theorem \ref{int}.4, we need to show that $B$ is the lexicographically smallest basis that contains $B\cap C$. To do so, assume there is a basis $A \supseteq B\cap C$ with  $A<_{lex} B$ Then $A \supseteq B\cap C\supseteq I$, % then $I\subseteq A$ 
contradicting the minimality of $B$. 
\end{proof}

\begin{reptheorem}{intshell} Any linear extension of the internal order $<_{int}$ gives a shelling order of the independence complex $IN(M)$.
\end{reptheorem}

\begin{proof}
Let $<$ be any linear extension of $<_{int}$, and let $A<C$ be bases, so 
%Let 
$A\not >_{int} C$. %be bases. 
We claim that there exists $B<_{int} C$ (and hence $B<C$) such that $A\cap C \subseteq B\cap C = C-c$ for some $c$ in $C$. This will prove the desired result.

To show this, let $D$ be the lexicographically smallest basis that contains $A\cap C$.  % \red{POR QUE?}\green{Ahora si?}
Notice that $D\not=C$ because $A\not >_{int} C$, using Theorem \ref{int}.4. Let $d$ be smallest element in $D-C$ and let $c$ be any element of $C-D$ such that $C'=C-c \cup d$ is a basis. Also 
%Then $D<_{int} A$ and 
notice that $D<_{int} C$  by Lemma \ref{minimo}; and since $<_{lex}$ is a linear extension of $<_{int}$, we have $D <_{lex} C$. This gives $d = \min(D-C) < \min(C-D) \leq c$, and therefore $C' <_{lex} C$.

Put $X =C-c$ and let $B$ be the lexicographically smallest basis that contains $X$. Since $C'$ contains $X$, $B \leq_{lex} C' <_{lex} C$, so $B \neq C$. Therefore
%
%Since $C' \supset X$, Lemma \ref{minimo} implies that $B \le_{lex} C'$, and therefore $B \le_{lex} C$. 
%
%
%We have that $C'$ contains $X$
%%We have that $D$ contains $X$ \red{CREO QUE NO}\green{Estaba mal explicado, pero creo que ya esta claro.}, 
%so $B \le_{lex} D <_{lex} C$. 
$B<_{int} C$ by Lemma \ref{minimo}. 
Also note that, since $c \notin D \supset A \cap C$ and $c \in C$, we must have $c \notin A$.  This gives $A\cap C\subseteq C-c=X$, and therefore $A\cap C\subseteq B\cap C = X$. It follows that $B$ satisfies the desired properties.
\end{proof}

%\begin{rem} 
%To see that the hypotheses of Theorem \ref{main} and Theorem \ref{intshell} are at the correct level of generality with respect to L%asVergnas's active orders, we give examples in Section 6 showing that linear extensions of $<_{int}$ and $<_{ext}$ may not be shelling orders of $Act_<(M)$, and linear extensions of $<_{ext}$ may not be shelling orders of $IN(M)$.
%However, see Proposition \ref{intshell}.
%\end{rem}

%------------------------------------------------------H-VECTOR------------------------------------------------------
\section{\textsf{The $h$-vector}}\label{sec:h}

We now describe the restriction sets for the shellings of Theorem \ref{main}.

\begin{prop}\label{lemR}
Let $<$ be any linear extension of $<_{ext/int}$, and regard it as a shelling order for $IN(M)$. Then the restriction set of each facet $C$ (which is a basis of $M$) is $IP(C)$.
\end{prop}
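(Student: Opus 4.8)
The plan is to identify the restriction set $\RR(C)$ directly from its defining property: $\RR(C)$ is the smallest face $S\subseteq C$ such that $S$ is not contained in any basis appearing earlier in the shelling order. Since $<$ is a linear extension of $<_{ext/int}$, an earlier basis is in particular not $\geq_{ext/int} C$. So I would first show the containment $\RR(C)\subseteq IP(C)$, and then the reverse.

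For $\RR(C)\subseteq IP(C)$, it suffices to show that $IA(C)$ is redundant: any face of $C$ contained in $C - i$ for some $i\in IA(C)$ lies in an earlier basis, hence cannot meet $\RR(C)$. Given $i\in IA(C)$, consider $\Cocirc(C,i)$; since $i=\min\Cocirc(C,i)$, every element $j\in\Cocirc(C,i)-i$ satisfies $j>i$, and $A_j:=C-i\cup j$ is a basis. Here I pick $j$ carefully: letting $j=\min(\Cocirc(C,i)-i)$ makes $j\in IA(A_j)$ (it is minimal in its fundamental cocircuit with respect to $A_j$), so $A_j - IA(A_j)\subseteq C-i\subsetneq C$, hence $A_j<_{int}C$, hence $A_j<C$. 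Thus $C-i$ is contained in the earlier basis $A_j$, so $i\notin\RR(C)$. As this holds for every $i\in IA(C)$, we get $\RR(C)\subseteq IP(C)$.

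For $IP(C)\subseteq\RR(C)$ — this is the crux — I must show $IP(C)$ itself is not contained in any earlier basis; equivalently, any basis $A$ with $IP(C)\subseteq A$ satisfies $A\geq_{ext/int}C$ (and if $A\neq C$ this contradicts $A<C$). By Definition \ref{extint}.2 it is enough to prove $IP(A)\cap EP(C)=\emptyset$; but actually it is cleaner to invoke the internal order: I will show that $IP(C)\subseteq A$ forces $C - IA(C)\subseteq A$ — which is immediate, since $C-IA(C)=IP(C)$ — and hence $C\leq_{int}A$ by Definition \ref{int}.2, so $C\leq_{ext/int}A$, so $C\leq A$ in the shelling order, meaning $A$ is not strictly earlier unless $A=C$. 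Wait — this requires care: $C\leq_{int}A$ gives $C\leq A$, so indeed no basis strictly before $C$ contains $IP(C)=C-IA(C)$. Therefore $\RR(C)$, being contained in $C$ and meeting every $C-i$ for $i\in IA(C)$ only in the complement of $IA(C)$, must contain all of $IP(C)$: more precisely, since $C-IA(C)$ is contained in no earlier basis, $\RR(C)\supseteq$ the minimal such set $=C-IA(C)=IP(C)$ once we know (from the previous paragraph) that removing any $i\in IA(C)$ does land us in an earlier basis.

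The main obstacle is getting the logic of the restriction set exactly right: $\RR(C)$ is characterized by "$S\subseteq C$ lies in no earlier facet $\iff \RR(C)\subseteq S$," so I must verify both (i) $IP(C)\subseteq C$ lies in no earlier facet, which is the internal-order argument of the third paragraph, and (ii) for each $i\in IA(C)$, the set $C-i$ does lie in an earlier facet, which is the fundamental-cocircuit construction of the second paragraph. Combining (i) and (ii) pins down $\RR(C)=IP(C)$ exactly, since $IP(C)$ is precisely the complement in $C$ of $\{i : C-i \text{ lies in an earlier facet}\}$ — nothing smaller can be the restriction set by (ii), and nothing larger is needed by (i). A minor subtlety to double-check is that in step (ii) the chosen $j$ genuinely gives $A_j<_{int}C$ and not merely $A_j\neq C$; taking $j$ minimal in $\Cocirc(C,i)$ handles this, exactly as in the proof of Theorem \ref{matroidshell}.
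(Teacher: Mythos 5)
Your proposal has two genuine errors, which happen to point in opposite directions, so the conclusions you write down are correct even though the arguments supporting them are not.

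The first is a factual matroid error in the second paragraph. You set $j := \min(\Cocirc(C,i)-i)$ and claim $j \in IA(A_j)$ where $A_j = C - i \cup j$. But $\Cocirc(A_j,j)$ and $\Cocirc(C,i)$ are both the unique cocircuit disjoint from $C-i$, so they are equal; since $i \in IA(C)$ we have $i = \min\Cocirc(C,i)$ and $j>i$, so $j$ is \emph{not} the minimum of $\Cocirc(A_j,j)$, i.e.\ $j \in IP(A_j)$. Thus $A_j <_{int} C$ fails — in fact Lemma \ref{minimo} gives $C \leq_{int} A_j$, since $C = (C-i)\cup \min\Cocirc(C,i)$ is the lex-smallest basis containing $C-i$, and consequently \emph{no} basis containing $C-i$ can precede $C$ in any linear extension of $\leq_{int}$ (hence of $\leq_{ext/int}$). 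So the true state of affairs is the exact opposite of what you assert: for $i \in IA(C)$, the face $C-i$ lies in no earlier facet.

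The second error is that you repeatedly apply the restriction-set characterization in the wrong direction. From the paper's definition, $S\subseteq C$ lies in no earlier facet iff $\RR(C)\subseteq S$; equivalently $i \in \RR(C)$ iff $C-i$ lies in some earlier facet. So ``$C-i$ in an earlier facet'' would prove $i\in\RR(C)$ (not $i\notin\RR(C)$), and ``$IP(C)$ in no earlier facet'' proves $\RR(C)\subseteq IP(C)$ (not $IP(C)\subseteq\RR(C)$). With the sign fixed, your third paragraph is a correct and clean proof of $\RR(C)\subseteq IP(C)$ — essentially identical to the paper's argument via Definition \ref{int}.2 — but then your two paragraphs are both aimed at that same containment, and the reverse containment $IP(C)\subseteq\RR(C)$ is never established. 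That direction needs a separate argument: either show directly that for each $p\in IP(C)$ the face $C-p$ does lie in an earlier facet (which follows from Lemma \ref{minimo} since the lex-smallest basis containing $C-p$ is $(C-p)\cup\min\Cocirc(C,p)\neq C$ and is $<_{int} C$), or, as the paper does, show via Theorem \ref{crapo}.2 and Definition \ref{int}.3 that every $U\subsetneq IP(C)$ lies in an earlier facet. The two errors in your second paragraph fortuitously cancel — the correct facts are ``$C-i$ not in an earlier facet, hence $i\notin\RR(C)$'' — so the stated conclusion $\RR(C)\subseteq IP(C)$ is right, but the derivation is not, and more importantly the reverse containment is missing.
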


\begin{proof} We need to show $IP(C)$ is the minimum subset of $C$ which is not a subset of a basis $B<C$. 

To show that $IP(C)$ indeed has this property, assume that if $IP(C) \subseteq B$. Then by Theorem \ref{int}.2, we have $C \leq_{int} B$ and hence $C \leq B$, as desired.

To show minimality, let $U \subsetneq IP(C)$. By Theorem \ref{crapo}.2 we can find a basis $A$ such that $A - IA(A) \subseteq U \subseteq A$. This gives $A-IA(A) \subseteq U \subsetneq C - IA(C)$, which in light of Theorem \ref{int}.3 gives $A <_{int} C$, and hence $A < C$. Therefore $U$ is a subset of $A$ with $A<C$, as desired.
\end{proof}

\begin{prop} 
Let $<$ be any linear extension of $<_{ext/int}$, and regard it as a shelling order for $\textrm{Act}_<(M)$.
Then the restriction set of each facet $F(C)$  (where $C$ is a basis of $M$) is $\overline{IP(C)}$.
% and $IN(M)$. 
%For every basis $C$, the restriction sets of the corresponding bases in these shellings satisfy ${\cal R}(F(C), \textrm{Act}_<(M)) = \overline{{\cal R}(C , IN(M))}$.
\end{prop}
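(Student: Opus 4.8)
The strategy is to transfer the computation of the restriction set from $\mathrm{Act}_<(M)$ back to $IN(M)$ by exploiting the complementation trick already used in Lemma \ref{lema:shelling} and Corollary \ref{ActtoIN}, and then to invoke Proposition \ref{lemR}. Recall that for a facet $F(C)$, the restriction set $\RR(F(C))$ is the unique minimal face of $F(C)$ that is not contained in any earlier facet $F(B)$ with $B<C$. First I would establish the candidate: I claim $\RR(F(C)) = \overline{IP(C)}$. Note $\overline{IP(C)} \subseteq \overline{C \cup EA(C)} \subseteq F(C)$, so it is indeed a face of $F(C)$.

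\textbf{Containment step.} Suppose $\overline{IP(C)} \subseteq F(B)$ for some basis $B$. I would like to conclude $B \geq_{ext/int} C$, hence $B \geq C$, which shows no earlier facet contains $\overline{IP(C)}$. The cleanest route is to pass to complements: $\overline{IP(C)} \subseteq F(B)$ is equivalent to $\overline{IP(C)} \cap G(B) = \emptyset$, where $G(B) = EA(B) \cup \overline{EP(B)}$; since $\overline{IP(C)} \subseteq \overline{E}$, this says $IP(C) \cap EP(B) = \emptyset$, which is precisely Theorem \ref{extint}.2, i.e. $B \geq_{ext/int} C$. So the containment direction is immediate and in fact recovers the shelling property via $<_{ext/int}$ directly, consistent with Theorem \ref{main}.

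\textbf{Minimality step.} This is the part I expect to require a little care, and I would reduce it to Proposition \ref{lemR}. Let $U \subsetneq \overline{IP(C)}$; I must produce a basis $B < C$ with $U \subseteq F(B)$. Since $\overline{IP(C)} \subseteq \overline{E}$, write $U = \overline{W}$ with $W \subsetneq IP(C) \subseteq C$. By Proposition \ref{lemR}, $IP(C)$ is the restriction set of $C$ in the $IN(M)$ shelling induced by the same linear extension of $<_{ext/int}$, so since $W \subsetneq IP(C)$, there is a basis $B < C$ with $W \subseteq B$. It then suffices to check $\overline{W} \subseteq F(B)$, i.e. $\overline{W} \subseteq \overline{B \cup EA(B)}$, which follows from $W \subseteq B$. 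Hence $U = \overline{W} \subseteq F(B)$ with $B < C$, establishing minimality.

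\textbf{Main obstacle.} The only genuine subtlety is ensuring that an arbitrary subface $U \subsetneq \overline{IP(C)}$ can be assumed to lie in $\overline{E}$ — but this is automatic since $\overline{IP(C)} \subseteq \overline{E}$ and $U$ is one of its subsets. After that, the argument is a direct translation of the $IN(M)$ restriction-set computation of Proposition \ref{lemR} through the order-reversing correspondence between faces of $\mathrm{Act}_<(M)$ and the sets $G(B)$; both directions are short. One could alternatively give a self-contained proof mirroring Proposition \ref{lemR} verbatim with bars inserted everywhere, using Theorem \ref{crapo}.2 to produce the witnessing basis $A$ in the minimality step, but invoking Proposition \ref{lemR} as a black box is cleaner and avoids repetition.
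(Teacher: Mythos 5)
Your proof is correct and follows essentially the same two-step strategy as the paper's: show $\overline{IP(C)} \subseteq F(B)$ forces $B \geq_{ext/int} C$ via Theorem \ref{extint}.2, and reduce the minimality of $\overline{IP(C)}$ to Proposition \ref{lemR} by stripping bars. Your phrasing of the containment step through the complement $G(B)$ is a cosmetic variant of the paper's direct argument (both land on $IP(C) \cap EP(B) = \emptyset$), so there is no substantive difference.
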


\begin{proof} 
We need to show $\overline{IP(C)}$ is the minimum subset of $F(C)$ which is not a subset of $F(B)$ for any basis $B<C$.
%Let $\RR(C) := {\cal R}(C , IN(M))$ and $\RR(F(C)) = {\cal R}(F(C), \textrm{Act}_<(M))$. 
%First we show that ${\cal R}(C) = IP(C)$. This is known for the lexicographic order $<_{lex}$ \cite[\comment{cita exacta?}]{bjorner} 

To show $\overline{IP(C)}$ does have this property, assume that $\overline{IP(C)} \subseteq F(B) = B \cup EP(B) \cup \overline{B \cup EA(B)}$ for some basis $B$. Then $IP(C) \subset B \cup EA(B)$, so $IP(C) \cap EP(B) = \emptyset$. By Theorem \ref{extint}.2, $C <_{ext/int} B$ so $C<B$, as desired.

To show minimality, let $\overline{U} \subsetneq \overline{IP(C)}$, so $U \subsetneq IP(C)$. By Proposition \ref{lemR}, $U$ is contained in a basis $A<C$, and hence $\overline{U}$ is contained in $F(A)$ for that basis, as desired.
\end{proof}

As an immediate consequence, we obtain our main enumerative result.

\begin{reptheorem}{hvector} 
The $h$-vector of $\textrm{Act}_<(M)$ equals the $h$-vector of $M$.
%The $h$-vector of $M$ is equal to the $h$-vector of $\textrm{Act}_<(M)$.
\end{reptheorem}
\begin{proof}
This follows from the previous two results, in light of Proposition \ref{shellingh}.
%Recall the definition
%\[h_i = \textrm{card}\left\{F\textrm{ a facet} \, : \,  |{\cal R}(F)| = i\right\}\]
%and by the previous theorem the restriction sets have the same size for $B$ and $F(B)$.
\end{proof}

%\noindent\comment{We can also prove the same statement about the restriction sets! They are all equal, and equal to the ones coming from lex.}

\section{\textsf{Topology}}\label{sec:topology}

The external activity complex $\textrm{Act}_<(M)$ is a cone; for example, it is easy to see that every facet contains $\overline{\min E}$ and $\max E$. Therefore $\textrm{Act}_<(M)$ is trivially contractible. It is more interesting to study the topology of the \emph{reduced external activity complex} ${\textrm{Act}}^\bullet_<(M)$, obtained by removing all cone points of $\textrm{Act}_<(M)$. It turns out that Corollary \ref{hvector} gives us enough information to describe it.
First we need a few technical lemmas. 

\begin{defi}
Define a \emph{loop} of a simplicial complex $\Delta$ to be an element $l$ of the ground set such that $\{l \}$ is not a face of $\Delta$.
\end{defi}
\begin{defi}
An element $e$ of a matroid $M$ is \emph{absolutely externally active} if it is externally active with respect to every basis not containing it, or \emph{absolutely externally passive} if it is externally passive with respect to every basis not containing it.
\end{defi} 
Let $AEA(M)$ and $AEP(M)$ be the respective sets of elements, and call the elements of $AE(M) = AEA(M) \cup AEP(M)$  \emph{externally absolute}. 

\begin{lem} \label{groundset}
The set of cone points of ${Act}_<(M)$ is $AEP(M) \cup \overline{AEA(M)}$. 
%
%those $e$ which are absolutely  \, : \, e \notin AEP(M)\} \cup \{\overline{e} \, : \, e \notin AEA(M)\}$. 
The ground set of ${\textrm{Act}}^\bullet_<(M)$ is $\{e \, : \, e \notin AEP(M)\} \cup \{\overline{e} \, : \, e \notin AEA(M)\}$, and this simplicial complex has no loops.
\end{lem}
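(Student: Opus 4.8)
The statement has three parts: identifying the cone points of $\textrm{Act}_<(M)$, computing the ground set of ${\textrm{Act}}^\bullet_<(M)$, and checking that ${\textrm{Act}}^\bullet_<(M)$ has no loops. The third part is essentially immediate once the second is settled, and the second is a direct consequence of the first (the ground set of the reduced complex is the original ground set $[[E]]$ minus the cone points), so the real content is the first part. I would organize the proof around that: \emph{an element $x \in [[E]]$ is a cone point of $\textrm{Act}_<(M)$ if and only if $x$ lies in every facet $F(B) = B \cup EP(B) \cup \overline{B \cup EA(B)}$.}

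First I would split into the two cases $x = e$ and $x = \overline e$ for $e \in E$. For $x = e$: the element $e$ lies in $F(B) = B \cup EP(B) \cup \overline{B\cup EA(B)}$ iff $e \in B \cup EP(B)$, i.e. iff $e \in B$ or ($e \notin B$ and $e$ is externally passive w.r.t. $B$). So $e$ is in every facet iff for every basis $B$ not containing $e$, $e \in EP(B)$ — which is exactly the definition of $e \in AEP(M)$. (One should note that if $e$ is a coloop then it is in every basis, hence trivially a cone point, and indeed a coloop is vacuously in $AEP(M)$ since there are no bases omitting it; this edge case is consistent.) Dually, for $x = \overline e$: $\overline e \in F(B)$ iff $e \in B \cup EA(B)$, iff $e \in B$ or $e$ is externally active w.r.t. $B$; so $\overline e$ is in every facet iff $e \in EA(B)$ for every basis $B$ not containing $e$, i.e. iff $e \in AEA(M)$. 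This gives the cone point set $AEP(M) \cup \overline{AEA(M)}$. Note $AEA(M)$ and $AEP(M)$ are disjoint (an element not contained in \emph{some} basis cannot be simultaneously always-active and always-passive; an element contained in every basis — a coloop — is vacuously in both, but then $e$ never appears with $\overline e$ in the relevant comparison, so there is no clash in the union), so the union is clean.

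For the ground set of ${\textrm{Act}}^\bullet_<(M)$: by definition it is $[[E]] = E \uplus \overline E$ with the cone points removed, which is $\{e : e \notin AEP(M)\} \cup \{\overline e : e \notin AEA(M)\}$, as claimed. Finally, for the no-loops assertion: a loop of the simplicial complex is a ground-set element $x$ with $\{x\}$ not a face. But every remaining ground-set element $x$ lies in at least one facet $F(B)$ (otherwise it would not be in the ground set at all, or — if it were in \emph{every} facet — it would have been removed as a cone point; either way a surviving element is in some but we only need \emph{at least one} facet), hence $\{x\}$ is a face. So ${\textrm{Act}}^\bullet_<(M)$ has no loops. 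I do not anticipate a genuine obstacle here; the one place to be careful is the bookkeeping around coloops and around elements contained in no basis (loops of $M$), to make sure the "for every basis not containing $e$" quantifier is interpreted correctly in those degenerate cases, and to confirm that the disjointness of $AEA(M)$ and $AEP(M)$ as used is not actually needed for the union formula — it is purely a statement about which of $e, \overline e$ gets deleted.
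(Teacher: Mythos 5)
Your identification of the cone points and the ground-set computation are correct and amount to an explicit unwinding of the definitions; the paper simply calls these two steps ``clear from the definitions,'' so you are in agreement there. Your aside about the disjointness of $AEA(M)$ and $AEP(M)$ is harmless but, as you yourself note, irrelevant to the union formula, since the two sets live on different copies of $E$.

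The gap is in the no-loops claim. You assert that every surviving ground-set element $x$ lies in at least one facet, justified by the parenthetical ``otherwise it would not be in the ground set at all, or --- if it were in every facet --- it would have been removed as a cone point.'' The first alternative is not valid: an element $x \in [[E]]$ that lies in \emph{no} facet of $\textrm{Act}_<(M)$ is still a vertex of $\textrm{Act}_<(M)$ (it is precisely a simplicial loop of $\textrm{Act}_<(M)$), and it is not removed when passing to $\textrm{Act}^\bullet_<(M)$, since removal kills only cone points, i.e.\ elements in \emph{every} facet. So your dichotomy leaves the ``in none'' case untouched. Concretely, if $e$ is a loop of $M$ (a one-element circuit), then $\{e\} = S(\{e\})$ is a minimal non-face, $e$ is externally active with respect to every basis (so $e \notin AEP(M)$ and $e$ survives), yet $e$ lies in no $F(B)$; the lemma therefore tacitly assumes $M$ is loopless. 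Under that assumption the correct and simplest argument is: every $e \in E$ lies in some basis $B$, hence both $e$ and $\overline e$ lie in $F(B)$, so both $\{e\}$ and $\{\overline e\}$ are faces. This handles all surviving vertices uniformly --- in particular the case $e \notin AEP(M)$ but $e \in AEA(M)$, where $e$ is in the ground set and the only bases $B$ with $e \in F(B)$ are those containing $e$. (As a further caution, the paper's own proof of the no-loops part pairs the hypothesis ``$e \notin AEP(M)$'' with the conclusion ``$\{\overline e\}$ is a face'' and vice versa; the ground-set description makes the unbarred $e$ the element to check under that hypothesis, so the basis-containment argument is what is really needed there too.)
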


\begin{proof}
The first two statements are clear from the definitions. For the last one, if $e \notin AEP(M)$, then we can find a basis $B$ with respect to which $e$ is externally active, so $\{\overline{e}\}\subset F(B)$ is a face of ${\textrm{Act}}^\bullet_<(M)$. Similarly, if $e \notin AEA(M)$, then we can find a basis $B$ with respect to which $e$ is externally passive, so $\{e\} \subset F(B)$ is a face of ${\textrm{Act}}^\bullet_<(M)$. 
\end{proof}
%
% Notice that $j \in S$ (resp. $\overline{j} \in S)$  if and only if $j$ is universally externally passive (resp. universally externally active). 

\begin{lem}\label{circuitos} 
Let $M=(E,{\cal B})$ be a matroid. Every element $e\in E$ is externally absolute if and only if  
%passive for every basis that does not contain it or externally active for every basis that does not contain it, 
the circuits of $M$ are pairwise disjoint.
\end{lem}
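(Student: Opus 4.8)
The plan is to prove both implications of Lemma~\ref{circuitos} directly from the definitions, using the Circuit Elimination Property (Lemma~\ref{circuitax}) as the main tool.

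\textbf{The easy direction.} First I would show that if the circuits of $M$ are pairwise disjoint, then every element is externally absolute. Fix $e\in E$ and a basis $B$ with $e\notin B$. The fundamental circuit $\Circ(B,e)$ is the unique circuit contained in $B\cup e$; since $e$ is not a loop (it lies in some basis, as $M$ restricted to any set is pure, or else $e\in AEA(M)$ trivially via the empty circuit $\{e\}$), the element $e$ belongs to a unique circuit $\gamma_e$, and $\Circ(B,e)=\gamma_e$ for every basis $B$ avoiding $e$. Therefore $\min\Circ(B,e)=\min\gamma_e$ is independent of $B$: if $\min\gamma_e=e$ then $e\in AEA(M)$, and if $\min\gamma_e\neq e$ then $e\in AEP(M)$. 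Either way $e$ is externally absolute. (If $e$ is a loop, $\{e\}$ is the unique circuit through $e$ and $e\in AEA(M)$.)

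\textbf{The hard direction.} Now suppose some two circuits $\gamma_1,\gamma_2$ share an element; I want to produce an element that is not externally absolute. The main obstacle is to translate ``two circuits intersect'' into ``some element sees different behaviour from two different bases.'' The plan is: among all pairs of distinct circuits with nonempty intersection, choose one, say $\gamma_1\cap\gamma_2\ni x$, and apply circuit elimination to get a circuit $\gamma_3\subseteq\gamma_1\cup\gamma_2-x$. Iterating / using induction on $|\gamma_1\cup\gamma_2|$ one can arrange to find two distinct circuits $\gamma,\gamma'$ with a common element $e$ that is \emph{not} the minimum of $\gamma\cup\gamma'$, so that $\gamma\neq\gamma'$ forces the existence of an element $e$ lying in two circuits with $e\neq\min\gamma$ but, after a further elimination step removing a suitable element, $e=\min$ of the resulting circuit. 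More concretely: since $\gamma\neq\gamma'$ there is an element $z$ in one but not the other; build a basis $B$ using the independent set $(\gamma-z)$ extended appropriately so that $\Circ(B,z')=\gamma$ for a chosen $z'\in\gamma\setminus\gamma'$, and another basis $B'$ with $\Circ(B',z')=\gamma'$; then compare $\min\gamma$ and $\min\gamma'$. If these two minima differ, pick $e$ to be whichever of the two circuits' elements witnesses the discrepancy — $e=\min(\gamma)$ say — and check $e\in EA(B)$ but $e\in EP(B')$, so $e$ is not externally absolute. If $\min\gamma=\min\gamma'=:m$, apply circuit elimination at $m$ to get $\gamma''\subseteq\gamma\cup\gamma'-m$, a strictly smaller configuration still containing an intersecting pair (or a circuit properly inside $\gamma\cup\gamma'$), and recurse.

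\textbf{Assembling the argument.} I expect the cleanest write-up contracts and deletes to reduce to a small matroid: if the circuits are not pairwise disjoint, then (again via circuit elimination and minimality of the chosen pair) $M$ has $U_{3,1}$-like local structure — more precisely there exist three elements $a,b,c$ and two bases differing only in one element such that one of $a,b,c$ is externally active with respect to one basis and externally passive with respect to the other; this is exactly the obstruction in the later Theorem~\ref{topo} where $U_{3,1}$ as a minor is the dividing line. The step I anticipate being the crux is the careful choice of the two bases $B,B'$ realizing the two circuits $\gamma,\gamma'$ as fundamental circuits of the \emph{same} external element, which requires knowing that an independent subset of $\gamma\cup\gamma'$ of the right size can be extended to a basis in two ways; this is a standard basis-exchange argument but must be set up so that $\min\Circ(B,e)\neq\min\Circ(B',e)$ genuinely occurs, and handling the subcase where all the relevant minima coincide via a downward induction on $|\gamma\cup\gamma'|$.
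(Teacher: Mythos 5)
Your easy direction is fine and matches the paper's, which is dismissed as straightforward; your closing parenthetical on loops handles the one delicate point (though coloops, which are vacuously absolute since they belong to every basis, also deserve a sentence).

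The hard direction has the right high-level plan -- circuit-eliminate to manufacture an element that is externally active via one circuit and externally passive via another -- but your execution has a genuine gap. When you write ``If these two minima differ, pick $e$ to be whichever of the two circuits' elements witnesses the discrepancy --- $e=\min(\gamma)$ say --- and check $e\in EA(B)$ but $e\in EP(B')$,'' this fails: for $e$ to be externally passive via $\gamma'$ you need $e\in\gamma'$, which you never ensure; and if $\min\gamma<\min\gamma'$ then $e=\min\gamma$ cannot lie in $\gamma'$ without being $\min\gamma'$, so this case as stated is vacuous. The reduction you want is cleaner and needs no bases: an element $e$ fails to be externally absolute exactly when there are circuits $\gamma,\gamma'\ni e$ with $e=\min\gamma$ but $e\neq\min\gamma'$ (extend $\gamma-e$, resp.\ $\gamma'-e$, to a basis). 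The key observation, which your write-up circles around without stating, is this: if $c_1=\min\gamma_1=\min\gamma_2$, eliminate $c_1$ to get $\gamma_3\subseteq\gamma_1\cup\gamma_2-c_1$; its minimum $c_3$ lies (WLOG) in $\gamma_1$ and satisfies $c_3>c_1=\min\gamma_1$, so $e=c_3$ works immediately with $\gamma=\gamma_3$, $\gamma'=\gamma_1$. If $\min\gamma_1\neq\min\gamma_2$, eliminate any $c\in\gamma_1\cap\gamma_2$; either $\min\gamma_3>\min\gamma_1$ (done) or $\min\gamma_3=\min\gamma_1$, which reduces to the equal-minima case just handled. Your proposed ``downward induction on $|\gamma\cup\gamma'|$'' and the explicit two-basis construction realizing the same fundamental circuit are both unnecessary and, as sketched, do not close; the argument terminates in one or two elimination steps once the right invariant (the new minimum strictly exceeds an old one) is identified.
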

\begin{proof}
The backward direction is a straightforward consequence of the definitions.
To prove the forward direction, we proceed by contradiction. % Call an element with that property an \emph{absolute} element. We will show that if two circuits intersect, then there is some element which is not absolute.\\\\
Assume that every element of $M$ is externally absolute, and that we have two circuits $\gamma_1$ and $\gamma_2$ with $\gamma_1\cap \gamma_2\neq \emptyset$ whose minimal elements are $c_1$ and $c_2$, respectively. Consider two cases.

1. If $c_1=c_2$ then perform circuit elimination to get $\gamma_3\subset \gamma_1\cup \gamma_2 - c_1$. Let $c_3$ be the minimal element of $\gamma_3$; without loss of generality assume $c_3 \in \gamma_1$. Then $c_3$ is externally active for some basis, as testified by $\gamma_3$, and it is  externally passive for another basis, as testified by $\gamma_1$. Hence $c_3$ is not absolute, a contradiction

2. If $c_1\neq c_2$ and $c\in \gamma_1\cap \gamma_2$, then perform circuit elimination with $c$ to get a circuit $\gamma_3 \subset \gamma_1\cup \gamma_2 - c$. Let $c_3$ be the minimal element of $\gamma_3$; assume $c_3 \in \gamma_1$. If $c_3=c_1$, then case 1 applies to circuits $\gamma_1$ and $\gamma_3$, and we get a contradiction. 
Otherwise, we must have $c_1 < c_3$ since $c_1 = \min \gamma_1$. Therefore $c_3$ is externally active for some basis, as testified by $\gamma_3$, and externally passive for another basis, as testified by  $\gamma_1$, a contradiction.
\end{proof}

\begin{prop}\label{subcomplex}
If a matroid is the disjoint union of circuits, then ${\textrm{Act}}^\bullet_<(M) \cong IN(M)$. 
Otherwise, ${\textrm{Act}}^\bullet_<(M)$ has a proper subcomplex which is isomorphic to $IN(M)$. 
The embedding may be chosen so that the image of facet $B$ of $IN(M)$ is a subset of the facet $F(B)$ of ${\textrm{Act}}^\bullet_<(M)$.
%\comment{No vale la pena decir que cada base $B$ vive dentro de $F(B)$ y que las igualdades de los shellings cuadran bien dentro del embedding? }
%
%The reduced external activity complex $\underline{\textrm{Act}}_<(M)$ has a subcomplex which is isomorphic to the independence complex $IN(M)$. 
\end{prop}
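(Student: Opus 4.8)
The plan is to produce an explicit combinatorial embedding of $IN(M)$ into ${\textrm{Act}}^\bullet_<(M)$ and to read off properness from Lemma \ref{circuitos}. The natural candidate map $\varphi$ sends a ground-set element $e$ to itself if $e \notin AEP(M)$ and to $\overline{e}$ if $e \in AEP(M)$; by Lemma \ref{groundset} this lands in the ground set of ${\textrm{Act}}^\bullet_<(M)$, since no $e$ is simultaneously absolutely externally active and absolutely externally passive (if all circuits through $e$ have $e$ as their minimum, $e$ is not passive with respect to the basis obtained by deleting the next element of such a circuit, unless $e$ is a coloop, in which case $e \notin AEP(M)$ and we may send $e \mapsto e$; I would check the coloop/loop edge cases carefully). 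First I would verify that $\varphi$ extends to a simplicial map on $IN(M)$, i.e.\ that it sends independent sets to faces. The cleanest way is to show that for a basis $B$, $\varphi(B) \subseteq F(B) = B \cup EP(B) \cup \overline{B \cup EA(B)}$: an element $e \in B$ maps to $e \in F(B)$ or to $\overline{e} \in \overline{B} \subseteq F(B)$, so $\varphi(B)$ is a face, and since every independent set extends to a basis, every face of $IN(M)$ maps to a face of ${\textrm{Act}}^\bullet_<(M)$. This also immediately gives the last sentence of the statement, that the image of facet $B$ sits inside $F(B)$.

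Next I would check that $\varphi$ is injective and is an isomorphism onto its image, i.e.\ that a subset $I \subseteq E$ with $\varphi(I)$ a face of ${\textrm{Act}}^\bullet_<(M)$ is independent in $M$. Suppose $I$ contains a circuit $\gamma$ with minimum element $c$. The minimal non-face $S(\gamma) = c \cup \overline{\gamma - c}$ of $\textrm{Act}_<(M)$ must not be contained in $\varphi(I)$. But $\varphi$ sends $c$ to $c$ or $\overline{c}$, and each $x \in \gamma - c$ to $x$ or $\overline{x}$; the obstruction is that $S(\gamma)$ uses the \emph{unbarred} copy of $c$ and the \emph{barred} copies of the other elements of $\gamma$, which need not match how $\varphi$ treats them. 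So this direction requires more care: I would argue instead directly on supports. If $I$ is dependent, pick a circuit $\gamma \subseteq I$. Since $\gamma$ is a circuit, by Lemma \ref{circuitos} and the failure of the "disjoint circuits" hypothesis one can choose $\gamma$ so that its minimum $c$ is \emph{not} absolutely externally active and none of its non-minimum elements is absolutely externally passive — more precisely, I would show that the assignment $\varphi$ restricted to $\supp$ recovers exactly the pattern of $S(\gamma)$ for a suitable circuit, using that $c = \min \gamma$ has a basis witnessing $c \in EP$ (so $c \notin AEA(M)$, and if also $c \notin AEP(M)$ we must examine which copy $\varphi$ picks). The honest statement is: $\varphi(e)$ is the unbarred copy precisely when $e \notin AEP(M)$; and for a circuit $\gamma$, $c = \min\gamma \notin AEP(M)$ while each $x \in \gamma - c$ that lies in some other circuit could fail to be in $AEP$. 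I expect the real content is exactly Lemma \ref{circuitos}: when circuits are not pairwise disjoint, one finds a circuit $\gamma$ realizing the non-face pattern $S(\gamma)$ under $\varphi$, so dependent sets are excluded; when circuits \emph{are} pairwise disjoint, every element is externally absolute, $AEP(M) \cup AEA(M) = E$, $\varphi$ becomes a bijection of ground sets, and the non-faces $S(\gamma)$ correspond bijectively to the circuits of $M$, giving ${\textrm{Act}}^\bullet_<(M) \cong IN(M)$.

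For properness when $M$ is not a disjoint union of circuits: by Lemma \ref{circuitos} some element $e$ is not externally absolute, so $e \notin AEP(M)$ and $e \notin AEA(M)$, whence both $e$ and $\overline{e}$ lie in the ground set of ${\textrm{Act}}^\bullet_<(M)$. The image of $\varphi$ contains only one of $e, \overline{e}$ (namely $e$, since $e \notin AEP(M)$), so $\overline{e}$ is a vertex of ${\textrm{Act}}^\bullet_<(M)$ — it is not a loop by Lemma \ref{groundset} — that is not in the image; hence the embedding is proper. \textbf{The main obstacle} I anticipate is pinning down precisely which circuit $\gamma$ to use so that $\varphi(\gamma)$ literally contains the minimal non-face $S(\gamma') = c' \cup \overline{\gamma' - c'}$ for some circuit $\gamma'$; this is where the hypothesis that circuits are not pairwise disjoint, via the circuit-elimination bookkeeping already done in the proof of Lemma \ref{circuitos}, must be invoked to guarantee that the barred/unbarred pattern forced by $\varphi$ on $\supp(\gamma)$ actually matches an obstruction. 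Everything else — simpliciality, the facet containment $\varphi(B) \subseteq F(B)$, and properness — is routine given Lemmas \ref{groundset} and \ref{circuitos}.
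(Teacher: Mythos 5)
Your construction is essentially the paper's: define a vertex map $\varphi$ on $E$ with values in $E \cup \overline E$, check that $\varphi(B) \subseteq F(B)$ for every basis $B$, and read off properness of the image from Lemma~\ref{circuitos}. The paper's choice is $e \mapsto e$ if $e \in AEA(M)$ and $e \mapsto \overline e$ otherwise; yours is $e \mapsto e$ if $e \notin AEP(M)$ and $e \mapsto \overline e$ otherwise. The two rules agree on externally absolute elements (so they coincide when $M$ is a disjoint union of circuits, where the isomorphism claim is needed) and differ only on non-absolute elements, where both choices land in the ground set of ${\textrm{Act}}^\bullet_<(M)$ and satisfy $\varphi(B) \subseteq F(B)$; either works.

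The issue is the ``main obstacle'' you flag, which is not actually needed. You set out to show that if $\varphi(I)$ is a face of ${\textrm{Act}}^\bullet_<(M)$ then $I$ is independent, i.e.\ that dependent sets hit non-faces, and you worry at length about matching the barred/unbarred pattern of $S(\gamma)$. But the claim is only that ${\textrm{Act}}^\bullet_<(M)$ has a (proper) subcomplex isomorphic to $IN(M)$, and for that the image $\{\varphi(I) : I \in IN(M)\}$ does the job automatically: it is downward closed (because $IN(M)$ is and $\varphi$ is a bijection onto $E' = \varphi(E)$), each member is a face (since $I \subseteq B$ gives $\varphi(I) \subseteq \varphi(B) \subseteq F(B)$), and $\varphi$ is a face-poset isomorphism from $IN(M)$ onto this collection because it is injective on vertices. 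Whether some $\varphi(I)$ with $I$ dependent also happens to be a face of ${\textrm{Act}}^\bullet_<(M)$ is irrelevant; such a set simply is not in the chosen subcomplex. So the circuit-elimination bookkeeping you anticipate is unnecessary, and the proof is already complete once you have $\varphi(B) \subseteq F(B)$ and the properness of $E'$ in the ground set (your last paragraph). For the isomorphism case you sketch a non-face-matching argument; the paper instead verifies $\varphi(B) = F(B) \cap E'$ directly, so that facets match up once $E'$ is the whole ground set. Both routes work and are short; the facet-matching one avoids any discussion of minimal non-faces of the reduced complex. One small slip: a coloop is vacuously in both $AEA(M)$ and $AEP(M)$ (there is no basis omitting it), not in neither as you assert; this edge case is harmless because a coloop contributes cone points to both $IN(M)$ and ${\textrm{Act}}_<(M)$, but the parenthetical you give is the wrong justification.
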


\begin{proof}
For every $e \in E$ let $e' = e$ if $e$ is absolutely externally active, and $e' = \overline{e}$ otherwise. The set $E' = \{e' \, : \, e \in E\}$ is a subset of the vertices of ${\textrm{Act}}^\bullet_<(M)$ by Lemma \ref{groundset}. For every basis $B$ of $M$ the set $B'=\{b' \, : \,  b \in B\}$ is a subset of $F(B)$, and hence a face of ${\textrm{Act}}^\bullet_<(M)$. This gives the desired embedding of $IN(M)$ in ${\textrm{Act}}^\bullet_<(M)$.

If $M$ is the disjoint union of circuits, then $E'$ equals the ground set of ${\textrm{Act}}^\bullet_<(M)$, and $B'$ equals $F(B) \cap E'$ for all bases $B$, so this embedding is actually an isomorphism. 

If $M$ is not the disjoint union of circuits, by Lemma \ref{groundset}, $E'$ is a proper subset of the ground set of ${\textrm{Act}}^\bullet_<(M)$, so the embedding of $IN(M)$ is a proper subcomplex of ${\textrm{Act}}^\bullet_<(M)$.
\end{proof}

\begin{lem}\label{sphere}
If a matroid $M$ of rank $r$ is the disjoint union of circuits, then the independence complex $IN(M)$ is homeomorphic to an $(r-1)$-sphere.
\end{lem}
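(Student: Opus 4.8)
The plan is to show that a disjoint union of circuits has an independence complex that is a join of boundaries of simplices, and that such a join is a sphere. Suppose $M$ decomposes as a direct sum $M = C_1 \oplus C_2 \oplus \cdots \oplus C_k$ where each $C_j$ is a circuit on a ground set $E_j$ of size $n_j$. Since each $C_j$ is a single circuit, its independent sets are exactly the proper subsets of $E_j$, so $IN(C_j)$ is the boundary complex $\partial \Delta^{n_j - 1}$ of an $(n_j - 1)$-simplex, which is homeomorphic to a sphere of dimension $n_j - 2$. Moreover $IN(M) = IN(C_1) * IN(C_2) * \cdots * IN(C_k)$, the simplicial join, because a subset of $E = E_1 \uplus \cdots \uplus E_k$ is independent in $M$ precisely when its intersection with each $E_j$ is independent in $C_j$.

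\medskip

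\noindent \textbf{Key steps.} First I would record the two structural facts above: the identification of $IN(C_j)$ with $\partial \Delta^{n_j - 1}$, and the fact that $IN$ of a direct sum of matroids is the join of the individual independence complexes. Second, I would invoke the standard topological fact that the join of spheres $S^{a} * S^{b}$ is homeomorphic to $S^{a + b + 1}$; iterating, $IN(M) \cong S^{(n_1 - 2)} * \cdots * S^{(n_k - 2)}$ is a sphere of dimension $\sum_{j=1}^k (n_j - 2) + (k-1) = \left(\sum_{j=1}^k n_j\right) - k - 1$. Third, I would check that this dimension is $r - 1$: since each circuit $C_j$ has rank $n_j - 1$, the rank of $M$ is $r = \sum_{j=1}^k (n_j - 1) = \left(\sum_{j=1}^k n_j\right) - k$, so the sphere has dimension $r - 1$, as claimed.

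\medskip

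\noindent \textbf{Main obstacle.} There is no serious obstacle here; the only thing requiring care is justifying that $M$ being ``a disjoint union of circuits'' really means a direct sum of circuit-matroids, i.e.\ that the ground set partitions into blocks each of which is a circuit and there are no other circuits crossing blocks. This is essentially the hypothesis as used elsewhere in the paper (compare Lemma \ref{circuitos}, where ``disjoint union of circuits'' is used interchangeably with ``the circuits are pairwise disjoint''), so I would simply note that pairwise-disjoint circuits covering $E$ forces the connected-component decomposition of $M$ into circuits. The remaining ingredients---$IN(\text{circuit}) = \partial \Delta$, $IN(M \oplus M') = IN(M) * IN(M')$, and $S^a * S^b \cong S^{a+b+1}$---are all standard, so the proof is a short assembly of these facts together with the rank bookkeeping.
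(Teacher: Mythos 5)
Your proof is correct and follows essentially the same route as the paper's: identify $IN$ of a single circuit with the boundary of a simplex, observe that $IN$ of the disjoint union is the join of the individual complexes, and invoke the fact that a join of spheres is a sphere. You add the explicit dimension bookkeeping (which the paper leaves implicit), but the argument is otherwise the same.
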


\begin{proof} 
If $M$ is a single circuit (necessarily of size $r+1$), then $IN(M)$ is the boundary of an $r$-simplex, and hence an $(r-1)$-sphere. %We then get $h_r(IN(M)) = 1$.

If $M$ is the disjoint union of circuits $\gamma_1, \ldots, \gamma_k$ then $IN(M)$ is the join of $IN(\gamma_1), \ldots, IN(\gamma_k)$; that is, $IN(M) = IN(\gamma_1) \star \cdots \star IN(\gamma_k) = \{A_1 \cup \cdots \cup A_k \, : \, A_i \in IN(\gamma_i) \textrm{ for }1 \leq i \leq k\}$. The result then follows from the fact that the join of two spheres $\mathbb{S}^k$ and $\mathbb{S}^l$ is homeomorphic to the sphere $\mathbb{S}^{k+l+1}$. \cite[Chapter 2.2.2]{Kozlov}
% From the description of the $h$-polynomial in terms of the $f$-vector it then follows that $h_M(x) = h_{C_1}(x) \cdot \cdots \cdot  h_{C_k}(x)$. Since the top entry of the $h$-vector of each $C_i$ is 1 by (a), we obtain $h_r(IN(M)) = 1$ as well.
\end{proof}

The matroids with pairwise disjoint cycles have a nice characterization in terms of excluded minors. 

\begin{lem} \label{U31}  A matroid $M$ contains two circuits with non empty intersection if and only if $U_{3,1}$ is a minor of $M$.
\end{lem}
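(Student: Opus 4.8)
The plan is to prove both directions by exhibiting/finding the relevant minor. For the backward direction, suppose $U_{3,1}$ is a minor of $M$, say $U_{3,1} = M'$ where $M' = (M \backslash D)/C$ for disjoint sets $C, D$. In $U_{3,1}$ on ground set $\{x,y,z\}$, any two of $\{x,y\}, \{x,z\}, \{y,z\}$ are circuits with non-empty intersection. Each of these circuits of $M'$ lifts to a circuit of $M$: if $\delta$ is a circuit of $M/C$ then $\delta \cup C'$ is a dependent set of $M$ for some $C' \subseteq C$, and it contains a circuit of $M$ meeting $\delta$; more carefully, one uses the standard fact that circuits of $M/C$ are the minimal non-empty sets of the form $\gamma \setminus C$ for $\gamma$ a circuit of $M$. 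So pick circuits $\gamma_1, \gamma_2$ of $M$ with $\gamma_1 \setminus C = \{x,y\}$ and $\gamma_2 \setminus C = \{x,z\}$; since $x$ lies in both $\gamma_1 \setminus C$ and $\gamma_2 \setminus C$, we have $x \in \gamma_1 \cap \gamma_2$, so $M$ has two circuits with non-empty intersection. (Deletion does not interfere since circuits of $M \backslash D$ are exactly the circuits of $M$ avoiding $D$.)

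For the forward direction, suppose $M$ has two circuits $\gamma_1, \gamma_2$ with $\gamma_1 \cap \gamma_2 \neq \emptyset$; I may assume the pair is chosen with $|\gamma_1 \cup \gamma_2|$ minimal among all such pairs. The goal is to contract and delete down to $U_{3,1}$, i.e.\ to a rank-$2$ matroid on $3$ elements which is a triangle of parallel classes — equivalently, a matroid of rank $2$ on $3$ points with no loops. First, delete every element outside $\gamma_1 \cup \gamma_2$, so we may assume $E = \gamma_1 \cup \gamma_2$. Next I want to contract enough elements to bring the rank down to $2$ while keeping two intersecting circuits. The rank of $\gamma_1 \cup \gamma_2$ is $|\gamma_1 \cup \gamma_2| - 2$ by submodularity (each $\gamma_i$ has a single circuit so $r(\gamma_i) = |\gamma_i| - 1$, and $\gamma_1 \cap \gamma_2$ spans, giving $r(\gamma_1 \cup \gamma_2) \le r(\gamma_1) + r(\gamma_2) - r(\gamma_1 \cap \gamma_2) \le |\gamma_1| - 1 + |\gamma_2| - 1 - (|\gamma_1 \cap \gamma_2|-?)$ — the precise bookkeeping needs care, but the point is the rank is small relative to the size). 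Pick an element $c \in \gamma_1 \cap \gamma_2$. In $M/c$, the sets $\gamma_1 - c$ and $\gamma_2 - c$ are dependent (if $\gamma_1 = \{c\}$ then $c$ is a loop, handled trivially; otherwise $\gamma_1 - c$ contains a circuit of $M/c$), and I claim one can choose circuits $\gamma_1', \gamma_2'$ of $M/c$ with $\gamma_i' \subseteq \gamma_i - c$ and $\gamma_1' \cap \gamma_2' \neq \emptyset$ — if not, minimality of $|\gamma_1 \cup \gamma_2|$ is contradicted after lifting back. Iterating, we reduce to a matroid of rank $2$ with two intersecting circuits; then each circuit has size $\le 3$ but $\ge 2$, the two circuits share a point, and a short case analysis on a rank-$2$ loopless matroid forces (after possibly one more deletion) the structure $U_{3,1}$: three points, pairwise parallel.

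The main obstacle I anticipate is the inductive/minimality argument in the forward direction: controlling what happens to the pair of intersecting circuits under contraction. When we contract $c \in \gamma_1 \cap \gamma_2$, the sets $\gamma_i - c$ may fail to be circuits of $M/c$, and the circuits they contain might no longer intersect. Handling this cleanly requires either (i) a careful choice of which element to contract, using circuit elimination to maintain a witnessing pair, or (ii) phrasing the whole forward direction as: among all minors of $M$ that have two intersecting circuits, pick one minor $N$ of minimum ground-set size, then argue $N$ itself must be $U_{3,1}$ — if $N$ had a point not forced by the two circuits we could delete it, and if $N$ had rank $> 2$ we could contract a shared point and still retain two intersecting circuits (again by circuit elimination), contradicting minimality. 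Approach (ii) is cleaner and is the one I would write up; the delicate lemma to isolate and prove is precisely that contracting a common element of two intersecting circuits in a minor-minimal example still leaves two intersecting circuits, which is where circuit elimination (Lemma \ref{circuitax}) does the work.
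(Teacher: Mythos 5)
Your backward direction is essentially the paper's --- both lift intersecting circuits from a minor back to $M$. For the forward direction, the paper is more direct than your approach (ii): given intersecting circuits $\gamma, \delta$ with $e \in \gamma \cap \delta$, $c \in \gamma - \delta$, $d \in \delta - \gamma$, it restricts to $\gamma \cup \delta$ and contracts everything but $c, d, e$. Your instinct to be cautious about that contraction is warranted, since taken literally the recipe can fail: in $U_{5,2}$ take $\gamma = \{1,2,3\}$, $\delta = \{1,4,5\}$, $e = 1$, $c = 2$, $d = 4$; then one contracts $\{3,5\}$, an independent set of rank $2$, and obtains a rank-$0$ matroid of three loops, not $U_{3,1}$. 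Some minimality --- taking $\gamma, \delta$ with $|\gamma \cup \delta|$ smallest, or the minor-minimal counterexample of your approach (ii) --- is needed to make the direct construction land, so your plan to set things up that way is sound and arguably safer than what is written in the paper.

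Nevertheless, your proposal contains a concrete error that would derail the forward direction: $U_{3,1}$ has rank $1$, not rank $2$. It is a single parallel class of three points; its circuits are the three $2$-element subsets, which pairwise intersect. The ``rank-$2$ matroid on $3$ points with no loops, a triangle of parallel classes'' that you describe is $U_{3,2}$, whose only circuit is the entire ground set; it has no pair of intersecting circuits and cannot be the target of this lemma. Your contractions must therefore drive the rank down to $1$, not $2$, and the closing case analysis should simply be that a loopless rank-$1$ matroid on three elements is $U_{3,1}$. With that corrected, approach (ii) is workable, but the lemma you rightly isolate as the crux --- that in a minor-minimal example one can contract a shared element and still retain two intersecting circuits --- is asserted rather than proved, and filling in that circuit-elimination argument is where the real remaining work lies.
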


\begin{proof} First suppose that $M$ contains two intersecting circuits $\gamma$ and $\delta$ which intersect at $e$. Let $c \in \gamma - \delta$ and $d \in \delta - \gamma$. Restricting to $\gamma \cup \delta$ and then contracting every element except for $c,d,$ and $e$, we obtain $U_{3,1}$ as a minor.

To show the converse consider any matroid $N$ and an element $e\in E$. Notice that every circuit of $N \backslash e$ is a circuit of $N$; and if $\gamma$ is a circuit of $N$, then either $\gamma$ or $\gamma \cup e$ is a circuit of $N$. It follows that if either $N \backslash e$ or $N/e$ have two overlapping circuits, so does $N$. Since $U_{3,1}$ has two overlapping circuits, so does every matroid containing it as a minor.
%
%
% If $e$ is not a coloop, all the circuits of $M\backslash e$ are also circuits in $M$. Also, the if $C$ is a circuit of $M\slash e$, then $C\cup \{e\}$ is a circuit in $M$. It follows if $M'$ is a minor of $M$ and $C'$ is a circuit of $M'$, then there is a circuit $C$ of $M$ such that $C\cap E(M') = C'$. 
%
%Assume that $U_{3,1}$ is a minor of $M$ and let $\gamma_1, \gamma_2$ be two distinct circuits of the minor $M' \cong U_{3,1}$. Then there are two circuits $C_1, C_2$ of $M$ with $\gamma_1 = C_1\cap E(M')$ and $\gamma_2 = C_2 \cap E(M')$. 
\end{proof}
Now we are ready to prove our main topological result.

\begin{reptheorem}{topo} %Assume that $M$ is coloop free. 
Let $M$ be a matroid and $<$ be a linear order on its ground set. The reduced external activity complex ${\textrm{Act}}^\bullet_<(M)$ is contractible if $M$ contains $U_{3,1}$ as a minor, and a sphere otherwise.
\end{reptheorem}

\begin{proof} Notice that if $M$ has a coloop $c$, then both $c$ and $\overline{c}$ are cone points of $\textrm{Act}_<(M)$, and are invisible in ${\textrm{Act}}^\bullet_<(M)$. Therefore we may assume that $M$ is coloop free. 

Let $r$ be the rank of $M$, and let $d = \dim({\textrm{Act}}^\bullet_<(M)) = \dim({Act}_<(M)) - |AE(M)| = n+r-1-|AE(M)|$. We consider two cases.

\medskip

1. If $M$ is not the disjoint union of circuits,  $|AE(M)| < n$ by Lemma \ref{circuitos}, so 
$d > r-1$. Clearly $h_d({\textrm{Act}}^\bullet_<(M)) = h_d({Act}_<(M))$, Theorem \ref{hvector} gives $h_d({Act}_<(M)) = h_d(IN(M))$, and since $IN(M)$ is $(r-1)$-dimensional, $h_d(IN(M)) = 0$.  
Therefore, by
%$h_d(\widetilde{\textrm{Act}})=0$ and 
%Since $\underline{\textrm{Act}}_<(M)$ is shellable, 
Theorem \ref{contraible}, ${\textrm{Act}}^\bullet_<(M)$ is contractible.

\medskip

2. If $M$ is the disjoint union of circuits, then ${\textrm{Act}}^\bullet_<(M) \cong IN(M)$ is a sphere invoking Proposition \ref{subcomplex} and Lemma \ref{sphere}.

\medskip

The result follows from Lemma \ref{U31}.
\end{proof}

We conclude that the simplicial complex ${\textrm{Act}}^\bullet_<(M)$ is a model for a matroid $M$ which is topologically simpler than the ``usual" model $IN(M)$.

\section{\textsf{Questions}}

\begin{itemize}
\item
There should be ``affine" analogs of the results of this paper. Geometrically, they should correspond to taking the closure of an affine subspace $L$ of $\mathbb{A}^n$ in $(\mathbb{P}^1)^n$, as opposed to a linear subspace, as explained in \cite{ArdilaBoocher}. To a \emph{morphism of matroids} $M \rightarrow M'$, one may associate an external activity complex $\textrm{Act}_<(M \rightarrow M')$ \cite{ArdilaBoocher} and active orders $<_{int}, <_{ext}, <_{ext/int}$ \cite{LasVergnas}. The analogous foundational results, such as Theorems \ref{crapo}, \ref{ext}, \ref{int}, \ref{extint} hold there as well. \cite{semimatroids, LasVergnas}
Do our main theorems hold in that more general setting?
\item
Even though $\textrm{Act}_<(M)$ only pays attention to the external activities of the bases of $M$, it is the external/internal order $<_{ext/int}$ which plays a crucial role in its shelling. This makes the following question from \cite{ArdilaBoocher} even more natural: is $\textrm{Act}_<(M)$ part of a larger (and well-behaved)  simplicial complex which simultaneously involves the internal and external activities of the bases of M? Ideally we would like it to come from a natural geometric construction.
\item Notice that for an ordered matroid $M$, every linear extension of the poset of restriction sets of the lexicographic shelling order of $IN(M)$ gives another shelling order with the same restriction sets. That means that every posible order of the facets that could give a shelling with the same restriction sets gives another shelling of $IN(M)$. Does this property say something more about the independence complex. Is there a wide class of examples of a shellable complex with a fix shelling order, such that every linear extension of the poset is again a shelling. Notice that \ref{intfalla} is an example that $Act_<(M)$ with the associated lexicographic shelling does not have this property. 
\end{itemize}

\section{\textsf{Acknowledgments}}

The first author would like to thank Adam Boocher; this project would not exist without our collaboration in \cite{ArdilaBoocher}, and the numerous conversations with him on this topic. The second author would like to Isabella Novik for the invitation to the University of Washington, where part of this project was carried on. This project started at the Encuentro Colombiano de Combinatoria (ECCO 14) and we would like to thank the SFSU-Colombia Combinatorics Initiative and the Universidad de Los Andes for hosting and funding the event.

\bibliography{biblio}
\bibliographystyle{plain}
\end{document}